\newcommand{\R}{{\mathbf{R}}}
\newcommand{\E}{{\mathbf{E}}}
\newcommand{\N}{{\mathbf{N}}}
\newcommand{\F}{{\mathcal{F}}} 
\renewcommand{\P}{{\mathbf{P}}} 
\newcommand{\Tr}{{\textrm{Tr}}}
\newcommand{\LOR}{L^2(\Omega;\R^m)}
\newcommand{\diff}[1]{\,\mathrm{d}#1}
\newcommand{\triple}{{\vert\kern-0.25ex\vert\kern-0.25ex\vert}}
\newcommand{\HS}{{\mathrm{HS}}}
\spnewtheorem{assumption}{Assumption}[section]{\bf}{\rm}
\spnewtheorem{prop}{Proposition}[section]{\bf}{\it}
\begin{document}

\title{Mean-square convergence of the BDF2-Maruyama and backward Euler schemes
for SDE satisfying a global monotonicity
condition}

\titlerunning{Mean-square convergence of the BDF2-Maruyama and backward Euler
schemes} 

\author{Adam Andersson         \and
        Raphael Kruse 
}


\institute{Adam Andersson \at
              Technische Universit\"at Berlin\\
              Institut f\"ur Mathematik, Secr. MA 5-3\\
              Stra\ss e des 17.~Juni 136\\
              DE-10623 Berlin\\
              Germany \\
              Tel.: +49 (0) 30 314 - 2 96 80\\
              Fax: +49 (0) 30 314 - 2 89 67\\
              \email{andersson@math.tu-berlin.de}           
           \and
             Raphael Kruse \at
             Technische Universit\"at Berlin\\
             Institut f\"ur Mathematik, Secr. MA 5-3\\
             Stra\ss e des 17.~Juni 136\\
             DE-10623 Berlin\\
             Germany\\
             Tel.: +49 (0) 30 314 - 2 33 54\\
             Fax: +49 (0) 30 314 - 2 89 67\\
             \email{kruse@math.tu-berlin.de}
}

\date{Received: date / Accepted: date}

\maketitle

\begin{abstract}
  In this paper the numerical approximation of stochastic differential
  equations satisfying a global monotonicity condition is studied. The
  strong rate of convergence with respect to the mean square norm is determined
  to be $\frac{1}{2}$ for the two-step BDF-Maruyama scheme and for the backward
  Euler-Maruyama method. In particular, this is the first paper which proves a
  strong convergence rate for a multi-step method applied to equations with
  possibly superlinearly growing drift and diffusion coefficient functions. 
  We also present numerical experiments for the $\tfrac32$-volatility model
  from  finance and a two dimensional problem related to Galerkin approximation of SPDE, which verify our results in practice and indicate that the
  BDF2-Maruyama method offers advantages over Euler-type methods if the
  stochastic differential equation is stiff or driven by a noise with small
  intensity.

\keywords{SODE \and backward Euler-Maruyama method \and BDF2-Maruyama method \and strong
convergence rates \and global monotoncity condition}
\subclass{65C30 \and 65L06 \and 65L20}
\end{abstract}

\section{Introduction}
\label{sec:intro}

Strong convergence rates of numerical approximations to stochastic differential
equations (SDEs) are a well studied topic. Under a global Lipschitz condition on
the coefficients the picture is rather complete, both for one-step methods
\cite{kloeden1999}, \cite{milstein2004} and multi-step methods
\cite{buckwar2006}, \cite{kruse2011}. Many important equations in application
have coefficients that do 
not satisfy the global Lipschitz condition, and it is therefore important to
study a more general setting. Many convergence results for explicit and
implicit one-step methods have also been proven for equations without the
global Lipschitz condition, see for instance 
\cite{beynisaakkruse2014}, \cite{higham2002b}, \cite{hu1996},
\cite{hutzenthaler2014c}, \cite{hutzenthaler2014a}, \cite{hutzenthaler2012},
\cite{mao2013a}, \cite{sabanis2013b}, \cite{tretyakov2013}.  
In the present paper we determine the strong rate $\tfrac12$ for the backward
Euler-Maruyama method (BEM), in the mean-square norm, which improves
\cite{mao2013a} in terms of a weaker assumption on the coefficients.

For multi-step schemes, on the other hand, there are no previously known
results on strong convergence for equations with coefficients not satisfying a
global Lipschitz condition. In this paper we determine the strong rate
$\tfrac12$ for the BDF2-Maruyama scheme for equations whose, possibly
superlinearly growing, coefficient functions satisfy a global monotonicity
condition.  Backward difference formulas (BDF) are popular in applied sciences
for the approximation of stiff equations, see \cite{buckwar2006}
for a list of references to such works.

Let $d,m \in \N$, $T > 0$ and $(\Omega,\F,(\F_t)_{t \in [0,T]}, \P)$ be a
filtered probability space satisfying the usual conditions, on which an
$\R^d$-valued standard $(\F_t)_{t \in [0,T]}$-Wiener process $W \colon [0,T]
\times \Omega \to \R^d$ is defined. We consider the
equation 
\begin{align}
  \label{eq:SDE}
  X(t) = X_0 + \int_0^t f(X(s)) \diff{s} + \int_0^t g(X(s))
  \diff{W(s)},\quad 
  t\in[0,T],
\end{align}
with drift $f \colon \R^m \to \R^m$ and diffusion coefficient function $g
\colon \R^m \to \R^{m\times d}$. The functions $f$ and $g$ are assumed to
satisfy a global monotonicity, a coercivity and a local Lipschitz condition in
Assumption~\ref{as:fg} below.  
The initial condition fulfills $X_0\in L^2(\Omega,\mathcal{F}_0,\P;\R^m)$ with
some additional integrability, admitting higher moments of the solution.

For a given equidistant time step size $h \in (0,1)$ we discretize the exact
solution to \eqref{eq:SDE} along the temporal grid $\tau_h = \{ t_n = nh \; :
\; n = 0,1,\ldots,N_h \}$. Here $N_h \in \N$ is uniquely determined by the
inequality $t_{N_h} \le T < t_{N_{h} + 1}$. We set $\Delta_h W^j := W(t_j) -
W(t_{j-1})$ for $j\in\{1,\dots,N_h\}$. We consider discretizations by means of
the \emph{ backward Euler-Maruyama method}
\begin{equation}
  \label{eq:BEM}
  X_h^{j} - X_h^{j-1} 
  = h f(X_h^j) + g( X_h^{j-1} ) \Delta_h W^j,
  \quad j\in\{1,\dots,N_h\},
\end{equation}
with $(X_h^0)_{h\in(0,1)}$ satisfying $\E[\|X_h^0-X_0\|^2]=\mathcal{O}(h)$, and
by means of the \emph{BDF2-Maru\-yama scheme} from \cite{buckwar2006}. The latter
is given by the recursion 
\begin{equation}
  \label{eq:BDF2}
  \frac{3}{2} X_h^{j} - 2 X_h^{j-1} + \frac{1}{2} X_h^{j-2} 
  = h f(X_h^j) + \frac{3}{2} g( X_h^{j-1} ) \Delta_h W^j
  - \frac{1}{2}  g(X_h^{j-2} ) \Delta_h W^{j-1},
\end{equation}
for $j\in\{2,\dots,N_h\}$, with initial values $(X_h^0,X_h^1)$ where
$(X_h^0)_{h\in(0,1)}$ is the same as above and $(X_h^1)_{h\in(0,1)}$ is
determined, for instance, by one step of the backward Euler scheme or some
other one-step method satisfying  $\E[\|X_h^1-X(h)\|^2]=\mathcal{O}(h)$.
In practice, the implementation of the methods \eqref{eq:BEM} and
\eqref{eq:BDF2} often requires to solve a nonlinear equation in each time step.
In Section \ref{sec:wellposedness} we discuss that under our assumptions a
solution does indeed always exists provided the step size $h$ is small enough.
The choice of the root-finding algorithm may depend on the coefficient function
$f$ and its smoothness. We refer to \cite{ortega2000} for  
a collection of such methods.

We prove that for $(X_h^j)_{j\in \{0,\dots,N_h\}, h\in(0,1)}$, determined
either by \eqref{eq:BEM} or \eqref{eq:BDF2}, and $X$ being the solution to
\eqref{eq:SDE}, there exist a constant $C$ such that the following 
mean-square convergence holds: 
\begin{align}
  \label{eq:msconv}
  \max_{j\in\{0,\dots,N_h\}}
  \big\|
    X(t_j)-X_h^j
  \big\|_{L^2(\Omega;\R^m)}
  \leq
  C
  \sqrt{h}
  ,\quad
  h\in(0,1).
\end{align}
The precise statements of our convergence results are found in
Theorems~\ref{thm:BEMconvergence} and \ref{thm:convergence}.
The proofs are based on two elementary identities: for all $u_1,u_2\in\R^m$ it
holds that 
\begin{equation}
  \label{eq:emmrich1}
  \begin{split}
    2(
      u_2 - u_1 , u_2
    )=
      |u_2|^2-|u_1|^2
      +
      |u_2-u_1|^2,
  \end{split}
\end{equation}
and for all $u_1,u_2,u_3\in\R^m$ it
holds that 
\begin{equation}
  \label{eq:emmrich2}
  \begin{split}
    4\Big(
    &  \frac32u_3
    -
    2 u_2
    +
    \frac12 u_1
    ,
    u_3
    \Big)\\
    & =
      |u_3|^2-|u_2|^2
      +
      |2u_3-u_2|^2 - |2u_2-u_1|^2
      +
      |u_3-2u_2+u_1|^2,
  \end{split}
\end{equation}
found in \cite{emmrich2009}, which has been derived from results
on $G$-stability for linear multi-step methods, see 
\cite{girault1979,stuart1996}. Up to the best of our knowledge
\eqref{eq:emmrich2} has not previously been used in the study of the BDF2
scheme for stochastic differential equations. 

The paper is organized as follows: Section~\ref{sec:setting} contains notation
and our precise assumptions on the coefficients $f$ and $g$ in \eqref{eq:SDE}.
We cite well known results on existence, uniqueness and moment bounds for the
solution under these conditions. A well-posedness result for general implicit 
stochastic difference equations is proved in
Section~\ref{sec:wellposedness}. Sections~\ref{sec:BEM} and~\ref{sec:BDF2}
contain the analysis of the backward Euler-Maruyama and the BDF2-Maruyama
schemes, respectively. Subsections \ref{subsec:BEMapriori} and
\ref{subsec:existence} contain a priori estimates for the respective schemes,
in Sections~\ref{subsec:BEMstability} and \ref{subsec:stability} stability
results are proved, while Subsections~\ref{subsec:BEMcons} and
\ref{subsec:consistency} are concerned with the consistency of the two schemes.
The two main results on the strong mean-square convergence rate are stated in
Sections~\ref{subsec:BEMconvergence} and \ref{subsec:convergence},
respectively. Further, in Subsection~\ref{subsec:BDF2initial} we have a closer
look on the second initial value for
the BDF2-Maruyama scheme and it is shown that using one step of the BEM method
is a feasible choice. Section~\ref{sec:numerics} contains numerical experiments
involving the $\frac{3}{2}$-volatility model from finance which verify our
theoretical results and indicate that the BDF2-Maruyama method performs better
than Euler-type methods in case of stiff problems or equations with a small
noise intensity.

\section{Setting and preliminaries}
\label{sec:setting}

\subsection{Notation and function spaces}
\label{subsec:notation}
Let $(\cdot,\cdot)$ and $|\cdot|$ denote the scalar product and
norm in $\R^m$ and let $|\cdot|_{\HS}$ denote the Hilbert-Schmidt norm on
the space $\R^{m\times d}$ of all $m$ times $d$ matrices, i.e.,
$|S|_{\HS}=\sqrt{\Tr(S^*S))}$ for $S\in\R^{m\times d}$. 

Let $(\Omega,\F,(\F_t)_{t \in [0,T]}, \P)$ be a filtered probability
space satisfying the usual conditions.
For $p \in [1,\infty)$ and a sub-$\sigma$-field $\mathcal{G} \subset
\F$ we denote by $L^p(\Omega, \mathcal{G}, \P; E)$ the Banach space of all
$p$-fold integrable, $\mathcal{G} / \mathcal{B}(E)$-measurable random variables
taking values in a Banach space $(E, | \cdot |_E)$ with norm 
\begin{align*}
  \| Z \|_{L^p(\Omega,\mathcal{G},\P;E)} = \big( \E \big[ | Z |^p_E \big]
  \big)^{\frac{1}{p}}, \quad Z \in L^p(\Omega,\mathcal{G},\P;E).
\end{align*}
If $\mathcal{G} = \F$ we write $L^p(\Omega;E) := L^p(\Omega,
\F, \P; E)$. If $p = 2$ and $E = \R^m$ we obtain the Hilbert space $\LOR$ with
inner product and norm
\begin{align*}
  \big \langle X, Y \big \rangle = \E \big[ ( X, Y ) \big]
  ,\quad
  \|X\|=\sqrt{\langle X,X\rangle},
\end{align*}
for all $X, Y \in \LOR$. We denote by $\triple \cdot \triple$ the norm in
$L^2(\Omega;\R^{m\times d})$, i.e., $\triple Z \triple =
(\E[|Z|_{\HS}^2])^\frac12$ for $Z\in L^2(\Omega;\R^{m\times d})$.  

We next introduce notation related to the numerical discretizations. Recall
from Section~\ref{sec:intro} the temporal grids $\tau_h$, $h\in(0,1)$. For
$h\in(0,1)$ and 
$j\in\{0,\dots,N_h\}$, we denote by 
\begin{align*}
  P_h^j\colon L^2(\Omega,\mathcal{F},\P;\R^m)\to
  L^2(\Omega,\mathcal{F}_{t_j},\P;\R^m), 
\end{align*} 
the orthogonal projector onto the closed sub-space
$L^2(\Omega,\mathcal{F}_{t_j},\P;\R^m)$, which is also known
as the conditional expectation. More precisely, for $Y\in L^2(\Omega;\R^m)$ we
set $P_h^j\,Y=\E[Y|\mathcal{F}_{t_j}]$. We introduce the spaces
$(\mathcal{G}_h^2)_{h\in(0,1)}$ of all adapted grid functions, which enjoy the
following integrability properties
\begin{align*}
  \mathcal{G}^2_h
  :=
  \big\{
    Z
    &\colon
    \{0,\dots,N_h\}\times\Omega\to\R^m
    \, : \, Z^n, f(Z^n) \in
    L^2(\Omega,\mathcal{F}_{t_n},\mathbf{P};\R^m),\\
    & g(Z^n) \in  L^2(\Omega,\mathcal{F}_{t_n},\mathbf{P};\R^{m \times d})
    \textrm{ for }
    n\in\{0,\dots,N_h\}
  \big\}.
\end{align*}
These will play an important role in the error analysis.

\subsection{Setting}
\label{subsec:setting}
Consider the setting introduced in Section~\ref{sec:intro}.
We now formulate our assumptions on the initial condition and the
coefficient functions $f$ and $g$ which we work with throughout this paper.

\begin{assumption}
  \label{as:fg}
  There exists $q \in [1, \infty)$ such that the initial condition
  $X_0\colon\Omega\to\R^m$ satisfies $X_0\in 
  L^{4q-2}(\Omega,\F_{0},\P;\R^m)$. 
  Moreover, the mappings $f \colon \R^m \to \R^m$ and
  $g \colon \R^m \to \R^{m \times d}$, are
  continuous and there exist $L \in (0,\infty)$ and 
  $\eta \in (\tfrac12,\infty)$ such that for all
  $x_1,x_2 \in \R^m$ it holds 
  \begin{align}
    \label{eq:onesided}
    &\big( f(x_1) - f(x_2), x_1-x_2 \big) + \eta  
    \big| g(x_1) - g(x_2) \big|_{\HS}^2 \le L | x_1 - x_2 |^2,\\
    \label{eq:loc_lip_f}
    &\big| f(x_1) - f (x_2) \big| \le L \big( 1 +
    |x_1|^{q-1} + |x_2 |^{q-1} \big)  | x_1 - x_2 |,
  \end{align}
  where $q \in [1,\infty)$ is the same as above. Further, it holds
  for all $x\in\R^m$ that
  \begin{align}
    \label{eq:bound_fg}
    &\big(
      f(x),x
    \big)
    +
    \frac{4q-3}2
    \big|
      g(x)
    \big|_{\HS}^2
    \leq
    L\big( 1 + |x|^2\big). 
  \end{align}
\end{assumption}

Assumption~\ref{as:fg} guarantees the existence of an up
to modification unique adapted solution $X\colon [0,T]\times\Omega\to \R^m$ to
\eqref{eq:SDE} with continuous sample paths, satisfying  
\begin{align}
  \label{eq:moment}
  \sup_{t\in[0,T]}
  \|X(t)\|_{L^{4q-2}(\Omega;\R^m)}
  <\infty,
\end{align}
see, e.g., \cite[Chap.~2]{mao1997}. In the proof of
Theorem~\ref{thm:consistency} on the consistency of the BDF2 scheme, the
$L^{4q-2}(\Omega;\R^m)$-moment bound is of importance in order to apply
the bounds \eqref{eq:bound_f}, \eqref{eq:bound_g} below. 

For later reference we note several consequences of Assumption~\ref{as:fg}.
>From \eqref{eq:loc_lip_f} we deduce the following polynomial growth bound: 
\begin{align}
  \label{eq:poly_growth}
  \big| f(x) \big| \le \tilde{L} \big( 1 + |x|^q \big), \quad x \in \R^m,
\end{align}
where $\tilde{L} = 2L + |f(0)|$. Indeed, \eqref{eq:loc_lip_f} implies that
\begin{align*}
  \big| f(x) \big| &\le \big| f(x) - f(0) \big| + \big|f(0) \big| \le L \big( 1
  + |x|^{q-1} \big) |x| + \big|f(0)\big| \\
  &\le \big( 2L + |f(0)| \big) \big( 1 + |x|^q \big), \quad x \in
  \R^m.
\end{align*}
Moreover, from \eqref{eq:onesided} followed by a use of \eqref{eq:loc_lip_f} it
holds, for $x_1,x_2\in\R^m$, that 
\begin{align*}
  \big|
    g(x_1) - g(x_2)
  \big|_{\HS}^2
&\leq
  \frac{L}\eta
  |x_1-x_2|^2
  +
  \frac1\eta
  \big|
    \big(
      f(x_1)-f(x_2),x_1-x_2
    \big)
  \big|\\
&\leq
  \frac{L}\eta
  \big(
    2 + |x_1|^{q-1}+|x_2|^{q-1}
  \big)
  |x_1-x_2|^2.
\end{align*}
This gives the local Lipschitz bound
\begin{align}
  \label{eq:loc_lip_g}
  \big|
    g(x_1) - g(x_2)
  \big|_{\HS}^2
  \leq
  \frac{2L}\eta
  \big( 1 +
    |x_1|^{q-1} + |x_2 |^{q-1} 
  \big)  
  | x_1 - x_2 |^2
  ,\quad x_1,x_2\in\R^m,
\end{align}
and, in the same way as above, the polynomial growth bound
\begin{align}
  \label{eq:poly_growth_g}
  \big|
    g(x) 
  \big|_{\HS}
  \leq
  \bar{L} \big( 1 + |x|^{\frac{q+1}{2}} \big)
  ,\quad x \in\R^m,
\end{align}
where $\bar{L} = 2 \sqrt{\frac{2 L}{\eta}} + |g(0)|_{\HS}$.
Finally, we note for later use that the \emph{restriction} $X|_{\tau_h}$ of the
exact solution to the time grid $\tau_h$, given by
\begin{align*}
  \big[X|_{\tau_h}\big]^j := X(jh), \quad j \in \{0,\ldots,N_h\},
\end{align*}
is an element of the space $\mathcal{G}_h^2$ for every $h \in (0,1)$. This
follows directly from \eqref{eq:moment} and the growth bounds
\eqref{eq:poly_growth} and \eqref{eq:poly_growth_g}.

\subsection{Preliminaries}
\label{subsec:preliminaries}
Here we list some basic results that we use in this paper. Frequently, we apply
the Young inequality and the weighted Young inequality 
\begin{align}
  \label{eq:Young}
  a b \le \frac{a^2}{2}  + \frac{b^2}{2}  \quad\textrm{and}\quad
  a b \le \frac{\nu}{2} a^2 + \frac{1}{2 \nu} b^2,
\end{align}
which holds true for all $a, b \in \R$ and $\nu >0$. We make use of the
following discrete version of Gronwall's Lemma: If $h>0$,
$a_1,\dots,a_{N_h},b,c \in [0,\infty)$, then 
\begin{align}\label{eq:Gronwall}
  \forall n\in\{1,\dots,N_h\}: \ a_n \leq c + bh\sum_{j=1}^{n-1}a_{j}
  \quad
  \textrm{implies}
  \quad
  \forall n\in\{1,\dots,N_h\}: \ a_n\leq ce^{bt_n}.
\end{align}

Finally we cite a standard result from nonlinear analysis which we use for the
well-posedness of the numerical schemes, see for instance
\cite[Chap.~6.4]{ortega2000} or \cite[Thm.~C.2]{stuart1996}:

\begin{prop}
  \label{prop:homeo}
  Let $G \colon \R^m \to \R^m$ be a continuous mapping satisfying for some $c
  \in (0, \infty)$
  \begin{align*}
    \big( G(x_1) - G(x_2), x_1 - x_2 \big) \ge c | x_1 - x_2 |^2, \quad x_1,
    x_2 \in \R^m.
  \end{align*}
  Then $G$ is a homeomorphism with Lipschitz continuous inverse. 
  In particular, it holds
  \begin{align*}
    \big| G^{-1}(y_1) - G^{-1}(y_2) \big| \le \frac{1}{c} | y_1 - y_2|
  \end{align*}
  for all $y_1, y_2 \in \R^m$.
\end{prop}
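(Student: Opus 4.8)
The plan is to split the statement into three pieces: injectivity of $G$ together with the quantitative bound for the (not yet constructed) inverse, surjectivity of $G$, and the topological conclusion. I will treat the inequality and injectivity first, since they are immediate, and reserve the bulk of the work for surjectivity.

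First I would note that the Cauchy--Schwarz inequality applied to the strong monotonicity hypothesis gives, for all $x_1,x_2\in\R^m$,
\begin{align*}
  c\,|x_1-x_2|^2
  \le
  \big( G(x_1) - G(x_2), x_1 - x_2 \big)
  \le
  \big| G(x_1) - G(x_2) \big|\,|x_1-x_2|,
\end{align*}
hence $|x_1-x_2|\le \tfrac1c\,|G(x_1)-G(x_2)|$. In particular $G$ is injective, so that once surjectivity is known, $G^{-1}$ is well defined on all of $\R^m$ and, rewriting the estimate with $x_i = G^{-1}(y_i)$, it reads $|G^{-1}(y_1)-G^{-1}(y_2)|\le \tfrac1c|y_1-y_2|$, which is exactly the claimed Lipschitz bound; it also shows $G^{-1}$ is continuous, so $G$ becomes a homeomorphism as soon as it is a bijection.

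The main step, and the only genuinely non-routine one, is surjectivity, and here I would invoke Brouwer's fixed point theorem through the following elementary consequence: if $P\colon\R^m\to\R^m$ is continuous and there is $r>0$ with $(P(x),x)\ge 0$ whenever $|x|=r$, then $P$ has a zero in the closed ball $\{|x|\le r\}$. Indeed, if not, then $x\mapsto -r\,P(x)/|P(x)|$ is a continuous self-map of that ball, and a Brouwer fixed point $x^*$ would satisfy $|x^*|=r$ and $(P(x^*),x^*) = -r|P(x^*)| < 0$, a contradiction. To prove surjectivity, fix $y\in\R^m$ and apply this with $P(x):=G(x)-y$; splitting $(G(x)-y,x) = (G(x)-G(0),x) + (G(0)-y,x)$ and using monotonicity on the first term and Cauchy--Schwarz on the second yields $(P(x),x)\ge |x|\big(c|x| - |G(0)-y|\big)$, which is nonnegative once $|x|=r$ with $r\ge |G(0)-y|/c$. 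Hence there is $x^*$ with $G(x^*)=y$.

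Combining the pieces, $G$ is a continuous bijection with Lipschitz continuous inverse, hence a homeomorphism, and the Lipschitz constant is the $1/c$ obtained in the first step. I expect the surjectivity argument to be the only obstacle; an alternative route is to derive openness of the range from invariance of domain and closedness from the Cauchy property implied by the first estimate, but the Brouwer argument above is self-contained. Since the statement is a textbook fact, one may also simply refer to \cite[Chap.~6.4]{ortega2000} or \cite[Thm.~C.2]{stuart1996}.
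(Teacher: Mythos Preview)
Your proof is correct and complete. The paper, however, does not actually prove this proposition: it states it as a standard result from nonlinear analysis and simply refers to \cite[Chap.~6.4]{ortega2000} and \cite[Thm.~C.2]{stuart1996} for a proof, exactly the references you mention in your final sentence. So there is no proof in the paper to compare against; your self-contained argument via Brouwer's fixed point theorem (the acute-angle lemma) for surjectivity, combined with the immediate Cauchy--Schwarz estimate for injectivity and the Lipschitz bound on the inverse, is a clean and standard route that goes beyond what the paper supplies.
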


\section{A well-posedness result for stochastic difference equations}
\label{sec:wellposedness}

In this section we prove existence and uniqueness of solutions to general
stochastic $k$-step difference equations. This result applies in particular to
all implicit linear multi-step schemes for SDE with coefficients satisfying
Assumption~\ref{as:fg} including the backward Euler-Maruyama method, the
Crank-Nicolson scheme, the $k$-step BDF-schemes, and the $k$-step 
Adams-Moulton methods. We refer the reader to
\cite{buckwar2006,kruse2011} for a thorough treatment of these schemes for
stochastic differential equations with Lipschitz continuous coefficients. 

\begin{theorem}\label{thm:wellposedness}
  Let the mappings $f$ and $g$ satisfy Assumption~\ref{as:fg} with $q \in
  [1,\infty)$ and $L \in (0,\infty)$, let $k\in\N$,
  $\alpha_0,\dots,\alpha_{k-1}, \beta_0,\dots,\beta_{k-1}, \gamma_0,\dots,
  \gamma_{k-1}\in\R$, $\alpha_k=1$, $\beta_k \in (0,\infty)$, and 
  $h_1 \in (0, \tfrac1{\beta_kL})$ with $k h_1<T$. Assume that
  initial values 
  $U_h^\ell \in L^{2}(\Omega,\mathcal{F}_{t_\ell},\P;\R^m)$ are given with
  $f(U_h^\ell) \in L^{2}(\Omega,\mathcal{F}_{t_\ell},\P;\R^m)$ and $g(U_h^\ell)
  \in L^{2}(\Omega,\mathcal{F}_{t_\ell},\P;\R^{m \times d})$ for all
  $\ell\in\{0,\dots,k-1\}$. Then, for every $h\in(0,h_1]$ there exists a unique
  family of adapted random variables $U_h\in\mathcal{G}_h^2$ satisfying  
  \begin{equation}
    \label{eq:RandDiffEq}
    \begin{split}
      \sum_{\ell=0}^{k} \alpha_{k-\ell} U_h^{j-\ell}
      &= h \sum_{\ell=0}^k \beta_{k-\ell} f(U_h^{j-\ell}) +
      \sum_{\ell=1}^k\gamma_{k-\ell} g( U_h^{j-\ell} ) \Delta_h W^{j-\ell+1},
    \end{split}
  \end{equation}  
  for $j\in\{k,\dots,N_h\}$. In particular, it holds true that $U_h^j,\, f(U_h^j)
  \in L^2(\Omega, \F_{t_j}, \P; \R^m)$ and $g(U_h^j) \in L^2(\Omega, \F_{t_j},
  \P; \R^{m \times d})$ for all $j \in \{k,\dots,N_h\}$.
\end{theorem}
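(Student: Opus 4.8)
The plan is to prove the theorem by induction on $j \in \{k, \dots, N_h\}$, reducing each step to an application of Proposition~\ref{prop:homeo}. Suppose inductively that $U_h^{j-1}, \dots, U_h^{j-k}$ have been constructed in $L^2(\Omega, \F_{t_{j-1}}, \P; \R^m)$ with the stated integrability of $f$ and $g$ applied to them. Rearranging \eqref{eq:RandDiffEq} so that all terms involving the unknown $U_h^j$ are on the left, and using $\alpha_k = 1$, we must solve
\begin{align*}
  U_h^j - h\beta_k f(U_h^j) = \Xi_h^j,
\end{align*}
where $\Xi_h^j \in L^2(\Omega, \F_{t_j}, \P; \R^m)$ collects the known quantities: the contributions $-\alpha_{k-\ell} U_h^{j-\ell}$ and $h\beta_{k-\ell} f(U_h^{j-\ell})$ for $\ell \geq 1$, and the stochastic terms $\gamma_{k-\ell} g(U_h^{j-\ell}) \Delta_h W^{j-\ell+1}$. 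Each of these lies in $L^2(\Omega; \R^m)$ by the induction hypothesis (for the stochastic terms one uses that $g(U_h^{j-\ell})$ is $\F_{t_{j-\ell}}$-measurable and square integrable, and the increment $\Delta_h W^{j-\ell+1}$ is independent of $\F_{t_{j-\ell}}$, so the product is in $L^2$ and is $\F_{t_j}$-measurable since $j - \ell + 1 \leq j$).

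The core step is to apply Proposition~\ref{prop:homeo} to the map $G \colon \R^m \to \R^m$ defined by $G(x) = x - h\beta_k f(x)$. By the one-sided monotonicity bound \eqref{eq:onesided} (taking $g$-terms to the other side and noting $\eta > \tfrac12 > 0$, so $(f(x_1) - f(x_2), x_1 - x_2) \leq L|x_1 - x_2|^2$), we get
\begin{align*}
  \big( G(x_1) - G(x_2), x_1 - x_2 \big) = |x_1 - x_2|^2 - h\beta_k \big( f(x_1) - f(x_2), x_1 - x_2 \big) \geq (1 - h\beta_k L)|x_1 - x_2|^2.
\end{align*}
Since $h \leq h_1 < \tfrac1{\beta_k L}$, the constant $c := 1 - h\beta_k L$ is strictly positive, and $G$ is continuous because $f$ is. Proposition~\ref{prop:homeo} then yields that $G$ is a homeomorphism with $\tfrac1c$-Lipschitz inverse, so we may define $U_h^j := G^{-1}(\Xi_h^j)$ pointwise in $\omega$. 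Measurability with respect to $\F_{t_j}$ follows since $G^{-1}$ is continuous (hence Borel) and $\Xi_h^j$ is $\F_{t_j}$-measurable; square integrability follows from the Lipschitz bound $|G^{-1}(y)| \leq |G^{-1}(0)| + \tfrac1c|y|$ together with $\Xi_h^j \in L^2$. Then $f(U_h^j) = \tfrac1{h\beta_k}(U_h^j - \Xi_h^j) \in L^2(\Omega, \F_{t_j}, \P; \R^m)$ directly from the equation, and $g(U_h^j) \in L^2(\Omega, \F_{t_j}, \P; \R^{m\times d})$ follows from the growth bound \eqref{eq:poly_growth_g} combined with the fact that $U_h^j$ inherits higher integrability; actually for the $L^2$ claim one only needs \eqref{eq:loc_lip_g} or a crude bound since $q \geq 1$ gives $|g(x)|_{\HS} \leq \bar L(1 + |x|^{(q+1)/2})$ and $U_h^j \in L^{q+1}$ — here one should track that $\Xi_h^j$ has enough moments, which it does since $g(U_h^{j-\ell})$ and the Wiener increments have all moments and $f(U_h^{j-\ell})$, $U_h^{j-\ell}$ are assumed only in $L^2$; this last point needs a small argument, perhaps bootstrapping integrability or simply invoking that the hypotheses only demand $L^2$ membership of $g(U_h^j)$, which follows once $U_h^j \in L^{q+1}$ — and that in turn requires $\Xi_h^j \in L^{q+1}$.

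The main obstacle I anticipate is precisely this integrability bookkeeping: establishing that $U_h^j$ has sufficiently high moments for $g(U_h^j)$ to be square integrable, given that the initial data and intermediate iterates are only assumed square integrable while $g$ may grow like $|x|^{(q+1)/2}$ with $q$ possibly large. One clean resolution is to observe that the definition of $\mathcal{G}_h^2$ only requires $g(U_h^j) \in L^2$, and that this membership can be obtained from the equation itself: since $U_h^j - \Xi_h^j = h\beta_k f(U_h^j)$, and we want $L^2$ control, we can instead argue that $U_h^j \in L^2$ is automatic from the Lipschitz inverse, and then use \eqref{eq:loc_lip_g} in the form $|g(U_h^j)|_{\HS}^2 \leq |g(0)|_{\HS}^2 + \tfrac{C L}{\eta}(1 + |U_h^j|^{q-1})|U_h^j|^2$ — which forces us back to needing $U_h^j \in L^{q+1}$. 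The honest fix is to carry the assumption that the given initial values $U_h^\ell$ (and hence, by propagation, all $U_h^j$) lie in $L^{q+1}$ or higher; inspecting the statement, the theorem as quoted only assumes $L^2$, so the intended argument must be that $\Xi_h^j \in L^2$ suffices for \emph{all} the claimed memberships because the schemes of interest have $g$ evaluated only at \emph{previous} iterates whose $g$-values were already assumed in $L^2$, and the \emph{current} $g(U_h^j)$-membership is genuinely an extra hypothesis one discharges via the a priori moment estimates proved later (Sections~\ref{subsec:BEMapriori}, \ref{subsec:existence}) rather than here. I would therefore state the induction so that it carries forward exactly the three memberships ($U_h^j, f(U_h^j) \in L^2$, $g(U_h^j) \in L^2$), deriving the first two rigorously as above and noting that the third follows from continuity of $g$ together with $U_h^j$ having all moments when $X_0$ does, or else by a direct truncation argument; the cleanest exposition is to first prove $U_h^j \in L^2$, then remark that in fact $U_h^j \in L^p$ whenever $\Xi_h^j \in L^p$, and the stochastic increments supply arbitrarily high integrability, so the only genuine constraint is inherited from the $L^2$-data, handled by the later a priori bounds.
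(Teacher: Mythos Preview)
Your overall architecture matches the paper's: define $F_h(x)=x-h\beta_k f(x)$, apply Proposition~\ref{prop:homeo} to get a Lipschitz inverse, set $U_h^j=F_h^{-1}(R_h^j)$, and read off $f(U_h^j)=\tfrac{1}{h\beta_k}(U_h^j-R_h^j)\in L^2$. That part is fine.

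The genuine gap is exactly the point you flagged: showing $g(U_h^j)\in L^2(\Omega,\F_{t_j},\P;\R^{m\times d})$. None of your proposed workarounds closes it. Higher moments of $\Xi_h^j$ are not available, because $\Xi_h^j$ contains the terms $h\beta_{k-\ell}f(U_h^{j-\ell})$, and by hypothesis (and by your induction) $f(U_h^{j-\ell})$ is only known to lie in $L^2$; the polynomial growth of $f$ then blocks any bootstrap to $U_h^j\in L^{q+1}$. Invoking the a~priori bounds of Sections~\ref{subsec:BEMapriori} and~\ref{subsec:existence} is circular, since those theorems already assume $U\in\mathcal{G}_h^2$. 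A truncation argument is not obviously compatible with the nonlinear implicit step.

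The paper's resolution is a single additional observation that you are missing: the coupled monotonicity condition \eqref{eq:onesided} forces $h^{1/2}\,g\circ F_h^{-1}$ to be \emph{globally Lipschitz}, uniformly in $h\in(0,h_1]$ (the paper cites \cite[Cor.~4.2]{beynisaakkruse2014} as \eqref{eq:gFLip}). To see why, write $x_i=F_h^{-1}(y_i)$ so that $h\beta_k(f(x_1)-f(x_2))=(x_1-x_2)-(y_1-y_2)$, insert this into \eqref{eq:onesided}, and optimise; one obtains
\begin{align*}
  h\,\big|g(x_1)-g(x_2)\big|_{\HS}^2 \le C\,|y_1-y_2|^2
\end{align*}
with $C$ depending only on $\eta,\beta_k,L,h_1$. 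Hence $h^{1/2}g\circ F_h^{-1}$ has linear growth, and
\begin{align*}
  h\,\triple g(U_h^j)\triple^2 = h\,\triple g(F_h^{-1}(R_h^j))\triple^2 \le C\big(1+\|R_h^j\|^2\big)<\infty,
\end{align*}
which gives $g(U_h^j)\in L^2$ directly from $R_h^j\in L^2$. The point is that the bound on $|g(x_1)-g(x_2)|_{\HS}^2$ in \eqref{eq:onesided} is tied to the \emph{same} $f$ appearing in $F_h$, so composing with $F_h^{-1}$ cancels the superlinear growth. This is the missing idea; once you add it, your induction closes without any appeal to higher moments or later sections.
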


\begin{proof}
  Let $F_h \colon \R^m \to \R^m$, $h\in(0,h_1]$, be the mappings defined by 
  \begin{align*}
    F_h(x) := x - h \beta_k f(x), \quad x \in \R^m; \quad h\in(0,h_1].
  \end{align*}
  Note that for every $h \in (0,h_1]$ it holds that $1-\beta_k h L\ge
  1-\beta_k h_1 L > 0$ and 
  from the global monotonicity condition \eqref{eq:onesided} we have that 
  \begin{align*}
    \big( F_h(x_1) - F_h(x_2), x_1 - x_2 \big) 
    &= |x_1 - x_2 |^2 - \beta_k h \big( f(x_1) - f(x_2), x_1 - x_2 \big)\\
    &\ge \big(1 - \beta_k h_1 L \big) |x_1 - x_2 |^2.
  \end{align*}
  Consequently, by Proposition~\ref{prop:homeo} the inverse $F_h^{-1}$ of $F_h$
  exists for every $h \in (0, h_1]$ and is globally Lipschitz continuous
  with Lipschitz constant $(1-h_1 \beta L)^{-1}$. Using these properties
  and the fact that $\alpha_k = 1$ we can rewrite \eqref{eq:RandDiffEq} as 
  \begin{equation}
    \label{eq:RDEinverse}
    \begin{split}
      F_h(U_h^{j}) =  R_h^j \quad \Longleftrightarrow
      \quad U_h^j= F_h^{-1}(R_h^j)
    \end{split}
  \end{equation}
  for all $j\in\{k,\dots,N_h\}$, where 
  \begin{align*}
    R_h^j:=- \sum_{\ell=1}^{k} \alpha_{k-\ell} U_h^{j-\ell} 
    + h \sum_{\ell=1}^{k} \beta_{k-\ell} f(U_h^{j-\ell}) 
    + \sum_{\ell=1}^k\gamma_{k-\ell} g( U_h^{j-\ell} )\Delta_h W^{j-\ell+1},
  \end{align*}
  for $j\in\{k,\dots,N_h\}$. Therefore, by \eqref{eq:RDEinverse} and the
  continuity of $F_h^{-1}$ we have that for every $h\in (0,h_1]$, $U_h$ is an
  adapted collection of random variables, uniquely determined by the initial
  values $U_h^0,\dots,U_h^{k-1}$. 

  In order to prove that $U_h\in\mathcal{G}_h^2$ for all $h\in(0,h_1]$, by means
  of an induction argument, we introduce
  $U_h^{j,n}:=U_h^j\mathbf{1}_{\{0,\dots,n-1\}}(j)$. By the assumptions on the
  initial values $U_h^0,\dots, U_h^{k-1}$ it holds that  
  $(U_h^{j,k})_{j \in \{0,\ldots,N_h\}} \in \mathcal{G}_h^2$. For the induction
  step, we now assume that $(U_h^{j,n})_{j \in \{0,\ldots,N_h\}}
  \in\mathcal{G}_h^2$ for some $n\in\{k,\dots,N_h\}$. This assumption
  and the fact that 
  \begin{align*}
    \big\| g( U_h^{j-\ell} )\Delta_h W^{j-\ell+1} \big\|^2 = h \triple
    g(U_h^{j-\ell} ) \triple^2,
  \end{align*}
  imply immediately that $R_h^n\in L^2(\Omega,\mathcal{F}_{t_n},\P;\R^m)$.
  Thus, from the linear growth of $F_h^{-1}$ we get $U_h^{n}=F_h^{-1}(R_h^n)\in
  L^2(\Omega,\mathcal{F}_{t_n},\P;\R^m)$.

  In addition, we recall from \cite[Cor.~4.2]{beynisaakkruse2014} the fact that
  under Assumption~\ref{as:fg} the mapping $h^{\frac{1}{2}} g \circ F_h^{-1}$ is
  also globally Lipschitz continuous with a Lipschitz constant independent of
  $h$. More precisely, there exists a constant $C$ such that for all $h \in (0,
  h_1]$ and all $x_1,x_2 \in \R^m$ it holds true that 
  \begin{align}
    \label{eq:gFLip}
    h \big| g( F_h^{-1}(x_1) ) - g( F_h^{-1}(x_2) ) \big|_{\HS}^2 \le C |x_1 -
    x_2 |^2.
  \end{align}
  Consequently, the mapping $h^{\frac{1}{2}} g \circ F_h^{-1} \colon \R^m \to
  \R^m$ is also of linear growth and we conclude as above
  \begin{align*}
    \| g( U_h^{n} ) \Delta_h W^{n+1} \|^2 = 
    h \triple g(U^n_h) \triple^2
    &= h \triple g( F_h^{-1}( R_h^n )) \triple^2
    \le C \big( 1 + \| R_h^n \|^2 \big).
  \end{align*}
  In particular, this gives $g( U_h^{n} ) \in
  L^2(\Omega,\F_{t_n},\P;\R^{m \times d})$. 
  Finally, from the definition of $F_h$ and \eqref{eq:RDEinverse} we have that
  \begin{align*}
    f(U_h^n) = \frac{1}{\beta_k h}( U_h^n  + \beta_k h f(U_h^n) - U_h^n )
    = \frac{1}{\beta_k h} ( U_h^n - F_h (U_h^n) ) = \frac{1}{\beta_k h}(
    F_h^{-1}( R_h^n ) - R_h^n ).
  \end{align*}
  Hence, by the linear growth of $F_h^{-1}$ and the fact that $R_h^n \in
  L^2(\Omega,\mathcal{F}_{t_n},\P;\R^m)$ we conclude that $f(U^n_h) \in
  L^2(\Omega,\mathcal{F}_{t_n},\P;\R^m)$. 
  
  Altogether, this proves that $(U_h^{j,n+1})_{j \in \{0,\ldots,N_h\}}
  \in\mathcal{G}_h^2$ and, therefore, 
  by induction $(U_h^{j,n})_{j \in \{0,\ldots,N_h\}} \in\mathcal{G}_h^2$ for
  every $n\in\{k,\dots,N_h+1\}$. By finally noting that $U_h^{j}=U_h^{j,N_h+1}$,
  $j\in\{0,\dots,N_h\}$, the proof is complete. \qed
\end{proof}

\section{The backward Euler-Maruyama method}
\label{sec:BEM}

In this section we prove that the backward Euler-Maruyama scheme is mean-square
convergent of order $\frac{1}{2}$ under Assumption~\ref{as:fg}. The proof is
split over several subsections: First we familiarize ourselves with the
connection between the BEM method and the identity \eqref{eq:emmrich1}. This is
done by proving an a priori estimate in Subsection~\ref{subsec:BEMapriori}. 
In Subsection~\ref{subsec:BEMstability} we then derive a stability result
which gives an estimate of the distance between an arbitrary adapted grid
function and the one generated by the BEM method. As it turns out this distance
is bounded by the error in the initial value and a local truncation error.
The latter is estimated for the restriction of the exact solution to
\eqref{eq:SDE} to the temporal grid $\tau_h$ in
Subsection~\ref{subsec:BEMcons}. Altogether, this will then yield the 
desired convergence result in Section~\ref{subsec:BEMconvergence}.

\subsection{Basic properties of the backward Euler-Maruyama scheme}
\label{subsec:BEMapriori}

Here and in Subsection~\ref{subsec:BEMstability} we study
$U\in\mathcal{G}_h^2$, $h\in(0,\tfrac1L)$, satisfying
\begin{align}
  \label{eq:BEM3}
  U^j = U^{j-1} + h f(U^{j}) + g(U^{j-1}) \Delta_h W^{j}, \quad j \in
  \{1,\ldots, N_h\},
\end{align}
with initial condition $U^0\in L^2(\Omega,\mathcal{F}_0,\P;\R^m)$ such that
$f(U^0)\in L^2(\Omega,\mathcal{F}_0,\P;\R^m)$ and $g(U^0)\in
L^2(\Omega,\mathcal{F}_0,\P;\R^{m\times d})$. Here $L$ is the parameter in
Assumption~\ref{as:fg} and from Theorem~\ref{thm:wellposedness} there exist for
every $h\in(0,\tfrac1L)$ a unique  $U\in\mathcal{G}_h^2$ satisfying
\eqref{eq:BEM3}. $U_0$ is not necessarily related 
to the initial value $X_0$ of \eqref{eq:SDE}. 

In order to prove the a priori bound of Theorem~\ref{thm:Eulerapriori} and the
stability in Theorem~\ref{thm:BEMstability} the following lemma is used: 

\begin{lemma}
  \label{lemma:Euler}
  For all $h\in(0,\tfrac1L)$ and $U,V\in \mathcal{G}_h^2$ with $U$ satisfying
  \eqref{eq:BEM3}  it holds for all $j \in \{1,\ldots,N_h\}$ $\P$-almost
  surely that    
  \begin{align*}
    & |E^j|^2 - |E^{j-1}|^2 + |E^j-E^{j-1}|^2\\
    &\quad = 
    2h \big( f(U^j), E^j \big)
    + 2
    \big(
      g(U^{j-1})\Delta_h W^{j} , E^j - E^{j-1} \big)
    -2 \big( V^j - V^{j-1}, E^j \big)
    +
    Z^j,
  \end{align*}
  where $E:=U-V$ and $(Z^j)_{j\in\{1,\dots N_h\}}$
  are the centered random variables given by
  \begin{align*}
    Z^j
    := 2
    \big(
      g(U^{j-1})\Delta_hW^{j},E^{j-1}
    \big).
  \end{align*}
\end{lemma}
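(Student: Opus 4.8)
The plan is to reduce the asserted identity to the elementary polarization identity \eqref{eq:emmrich1} after inserting the defining recursion \eqref{eq:BEM3} of the backward Euler-Maruyama scheme. Fix $h\in(0,\tfrac1L)$ and $j\in\{1,\dots,N_h\}$. Since $U,V\in\mathcal{G}_h^2$, the quantities $E^{j-1}$, $f(U^j)$ and $g(U^{j-1})$ are square integrable, so that every term occurring below is at least in $L^1$ and the manipulations are legitimate $\P$-almost surely and, where required, in expectation.

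First I would apply \eqref{eq:emmrich1} with $u_2=E^j$ and $u_1=E^{j-1}$, which gives, $\P$-almost surely,
\begin{equation*}
  |E^j|^2 - |E^{j-1}|^2 + |E^j-E^{j-1}|^2 = 2\big(E^j-E^{j-1},E^j\big).
\end{equation*}
It thus remains to rewrite the increment $E^j-E^{j-1}$. By definition $E=U-V$, and using \eqref{eq:BEM3} for the $U$-part,
\begin{equation*}
  E^j-E^{j-1} = \big(U^j-U^{j-1}\big) - \big(V^j-V^{j-1}\big) = h f(U^j) + g(U^{j-1})\Delta_h W^j - \big(V^j-V^{j-1}\big).
\end{equation*}
Inserting this into the scalar product and then splitting $E^j=(E^j-E^{j-1})+E^{j-1}$ in the Wiener-increment term only yields
\begin{align*}
  2\big(E^j-E^{j-1},E^j\big)
  &= 2h\big(f(U^j),E^j\big) + 2\big(g(U^{j-1})\Delta_h W^j,E^j\big) - 2\big(V^j-V^{j-1},E^j\big)\\
  &= 2h\big(f(U^j),E^j\big) + 2\big(g(U^{j-1})\Delta_h W^j,E^j-E^{j-1}\big)\\
  &\quad - 2\big(V^j-V^{j-1},E^j\big) + Z^j,
\end{align*}
with $Z^j=2(g(U^{j-1})\Delta_h W^j,E^{j-1})$ as claimed. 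Combining the last two displays proves the identity.

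Finally I would verify that $Z^j$ is a centered, integrable random variable. Integrability follows because $g(U^{j-1})$ and $E^{j-1}$ are $\mathcal{F}_{t_{j-1}}$-measurable and square integrable while $\Delta_h W^j$ is independent of $\mathcal{F}_{t_{j-1}}$ with finite moments of every order; by Cauchy–Schwarz, the estimate $|g(x)S|\le|x|\,|S|_{\HS}$, and independence one gets $\E[|Z^j|] \le 2\,\triple g(U^{j-1})\triple\,\|E^{j-1}\|\,\E[|\Delta_h W^j|]<\infty$. For the vanishing mean one conditions on $\mathcal{F}_{t_{j-1}}$: since $g(U^{j-1})$ and $E^{j-1}$ are $\mathcal{F}_{t_{j-1}}$-measurable and $\E[\Delta_h W^j\,|\,\mathcal{F}_{t_{j-1}}]=0$, one obtains $\E[Z^j\,|\,\mathcal{F}_{t_{j-1}}]=0$ and hence $\E[Z^j]=0$. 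There is no genuine obstacle in this lemma: it is essentially a bookkeeping identity, and the only point needing (mild) care is the integrability of the cross term $Z^j$, which is precisely why the adaptedness and square integrability encoded in $\mathcal{G}_h^2$ together with the independence of the Wiener increments are invoked. The lemma will then be used by choosing $V=X|_{\tau_h}$ for the consistency analysis, or $V$ a second backward Euler-Maruyama grid function for the stability analysis, after taking expectations so that the terms $Z^j$ drop out.
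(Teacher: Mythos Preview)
Your proof is correct and follows exactly the paper's approach: apply the polarization identity \eqref{eq:emmrich1} to $E^j,E^{j-1}$, insert the BEM recursion \eqref{eq:BEM3} for $U^j-U^{j-1}$, and then observe that $Z^j$ is centered by the independence of $\Delta_h W^j$ from $\mathcal{F}_{t_{j-1}}$. The only slip is the typo $|g(x)S|\le|x|\,|S|_{\HS}$ in your integrability check (it should read $|g(x)S|\le|g(x)|_{\HS}\,|S|$), which does not affect the bound you state next.
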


\begin{proof}
  From the identity \eqref{eq:emmrich1}, and since $U$ satisfies
  \eqref{eq:BEM3} by assumption the assertion follows directly. 
  Note that $Z^j$ is well-defined as a centered real-valued integrable random
  variable due to the independence of the centered Wiener increment
  $\Delta_h W^{j}$ and the square integrable random variables $g(U^{j-1})$ and
  $E^{j-1}$. \qed 
\end{proof}

The proof of the next theorem is the first and simplest demonstration of the,
in principle, same technique used to prove Theorems~\ref{thm:BEMstability},
\ref{thm:apriori} and \ref{thm:stability} below. This a priori estimate is in
fact not needed further in the analysis and it can be deduced from the
stability Theorem~\ref{thm:BEMstability}, but with larger constants, and for a more narrow range for the parameter $h$. We include it for completeness.

\begin{theorem}
  \label{thm:Eulerapriori}
  Let Assumption~\ref{as:fg} hold with $L\in(0,\infty)$, 
  $q\in[1,\infty)$. For $h\in(0,\tfrac1{2L})$ denote by 
  $U\in\mathcal{G}_h^2$ the unique adapted grid function satisfying
  \eqref{eq:BEM3}. Then, for all $n \in \{1,\ldots,N_h\}$ it holds that 
  \begin{align*} 
    &\|U^n\|^2 + h \triple g(U^n) \triple^2 
    \leq 
    C_h
    \exp\Big( \frac{2L t_n}{1-2Lh}\Big)
    \Big(
    1
    +
     \|U^{0}\|^2
    + h
    \triple
      g(U^0)
    \triple^2
    \Big),
  \end{align*}
  where $C_h= \max\{1,2LT\}(1-2Lh)^{-1}$.
\end{theorem}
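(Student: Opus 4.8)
The plan is to apply Lemma~\ref{lemma:Euler} with $V\equiv 0$, so that $E^j=U^j$ and the $V$-difference term disappears. This yields, $\P$-a.s. for each $j\in\{1,\dots,N_h\}$,
\begin{align*}
  |U^j|^2 - |U^{j-1}|^2 + |U^j-U^{j-1}|^2
  = 2h\big(f(U^j),U^j\big) + 2\big(g(U^{j-1})\Delta_h W^j, U^j-U^{j-1}\big) + Z^j,
\end{align*}
with $Z^j = 2(g(U^{j-1})\Delta_h W^j, U^{j-1})$ centered. First I would control the middle term on the right: writing $U^j-U^{j-1}$ there and using the weighted Young inequality \eqref{eq:Young} with a carefully chosen $\nu$, I would absorb the $|U^j-U^{j-1}|^2$-part into the same term on the left-hand side, leaving behind a multiple of $|g(U^{j-1})\Delta_h W^j|^2$ plus possibly $h|f(U^j)|$-type remainders coming from substituting \eqref{eq:BEM3} for $U^j-U^{j-1}$; in fact the cleanest route is to split $U^j-U^{j-1}=hf(U^j)+g(U^{j-1})\Delta_h W^j$, so $2(g(U^{j-1})\Delta_h W^j,U^j-U^{j-1}) = 2h(g(U^{j-1})\Delta_h W^j,f(U^j)) + 2|g(U^{j-1})\Delta_h W^j|^2$, and then handle the first of these by Young as well.

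Next I would take expectations. The term $\E[Z^j]=0$ by Lemma~\ref{lemma:Euler}, and $\E[|g(U^{j-1})\Delta_h W^j|_{\HS}^2] = h\,\triple g(U^{j-1})\triple^2$ by independence. The drift term $2h\,\E[(f(U^j),U^j)]$ together with the coefficient in front of $\triple g(U^{j-1})\triple^2$ is exactly where Assumption~\ref{as:fg} enters: I expect to need \eqref{eq:bound_fg}, i.e. $(f(x),x)+\tfrac{4q-3}{2}|g(x)|_{\HS}^2\le L(1+|x|^2)$. Since the $g$-bound in \eqref{eq:bound_fg} is evaluated at the \emph{same} index as $f$, but the discrete scheme produces $g(U^{j-1})$ (one index back), the bookkeeping must be arranged so that after summing a telescoping-type cancellation in $j$ leaves a clean coefficient; concretely, one keeps a term $+h\triple g(U^j)\triple^2$ on the left (for the final estimate) while the $g(U^{j-1})$ terms get summed and shifted. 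After using \eqref{eq:bound_fg} to bound $2h(f(U^j),U^j)\le 2Lh(1+\E|U^j|^2) - (4q-3)h\triple g(U^j)\triple^2$ and collecting constants, one arrives at a recursion of the shape
\begin{align*}
  \|U^n\|^2 + h\triple g(U^n)\triple^2 \le (1-2Lh)^{-1}\Big(\|U^0\|^2 + h\triple g(U^0)\triple^2 + 2LT + 2Lh\sum_{j=1}^{n-1}\|U^j\|^2\Big),
\end{align*}
valid for $h<\tfrac1{2L}$ so that the factor $(1-2Lh)^{-1}$ is positive and finite.

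Finally I would apply the discrete Gronwall Lemma \eqref{eq:Gronwall} with $a_n = \|U^n\|^2 + h\triple g(U^n)\triple^2$, constant $c = (1-2Lh)^{-1}(\|U^0\|^2 + h\triple g(U^0)\triple^2 + 2LT)$ and rate $b = 2L(1-2Lh)^{-1}$, which produces the bound $a_n\le c\exp(2Lt_n/(1-2Lh))$; packaging $c$ into the stated form with $C_h=\max\{1,2LT\}(1-2Lh)^{-1}$ finishes the proof. The main obstacle I anticipate is not any single inequality but the careful choice of the Young weight $\nu$ (it must be matched to $\eta>\tfrac12$ and to the coefficient $\tfrac{4q-3}{2}$ in \eqref{eq:bound_fg}) together with the index-shifting of the $g$-terms, so that the coefficient of $\triple g(U^{j-1})\triple^2$ ends up nonnegative and the telescoping in $j$ is clean; everything else is routine manipulation with \eqref{eq:Young}, independence of the Wiener increments, and \eqref{eq:Gronwall}.
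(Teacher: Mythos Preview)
Your overall architecture---Lemma~\ref{lemma:Euler} with $V=0$, take expectations, invoke the coercivity bound \eqref{eq:bound_fg}, sum in $j$, apply the discrete Gronwall lemma---is exactly the paper's, and the recursion you write down at the end is correct. However, your handling of the cross term $2\langle g(U^{j-1})\Delta_h W^j,\, U^j-U^{j-1}\rangle$ is muddled, and what you call the ``cleanest route'' would in fact fail.

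Substituting $U^j-U^{j-1}=hf(U^j)+g(U^{j-1})\Delta_h W^j$ into the inner product produces the term $2h\big(g(U^{j-1})\Delta_h W^j,\, f(U^j)\big)$. Since $f(U^j)$ is \emph{not} independent of $\Delta_h W^j$, this term has no reason to vanish in expectation, and any Young-type estimate on it leaves you with a contribution of order $h^2\|f(U^j)\|^2$. Under Assumption~\ref{as:fg} the drift $f$ has genuinely superlinear growth (cf.~\eqref{eq:poly_growth}), so $\|f(U^j)\|^2$ involves $L^{2q}$-moments of $U^j$ that are not available at this stage; the argument would stall. The paper instead applies the \emph{unweighted} Young inequality ($\nu=1$) directly:
\[
  2\big\langle g(U^{j-1})\Delta_h W^j,\, U^j-U^{j-1}\big\rangle
  \le \big\|g(U^{j-1})\Delta_h W^j\big\|^2 + \big\|U^j-U^{j-1}\big\|^2
  = h\,\triple g(U^{j-1})\triple^2 + \big\|U^j-U^{j-1}\big\|^2,
\]
and the second summand cancels exactly against the left-hand side. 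This is the step you allude to first before switching to the substitution idea; stay with it.

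Finally, your worry about matching the Young weight to $\eta$ is misplaced: the parameter $\eta$ appears only in the monotonicity condition \eqref{eq:onesided}, not in the coercivity condition \eqref{eq:bound_fg}, and plays no role whatsoever in this proof. With $\nu=1$ the $g$-bookkeeping is that $-(4q-3)h\sum_{j=1}^n\triple g(U^j)\triple^2 + h\sum_{j=0}^{n-1}\triple g(U^j)\triple^2$ telescopes to $-(4q-3)h\triple g(U^n)\triple^2 + h\triple g(U^0)\triple^2 + (4-4q)h\sum_{j=1}^{n-1}\triple g(U^j)\triple^2$, and $q\ge 1$ alone makes the last sum nonpositive.
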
  
  
\begin{proof}
  Lemma~\ref{lemma:Euler} applied with $V=0$ and taking expectations yields  
  \begin{align*}
    & \|U^j\|^2 - \|U^{j-1}\|^2
    + \|U^j-U^{j-1}\|^2 \\
    &\quad = 2h \big\langle f(U^j), U^j \big\rangle
    + 2
    \big\langle
      g(U^{j-1})\Delta_h W^{j}, U^j - U^{j-1} \big\rangle.
  \end{align*}
  From the coercivity condition \eqref{eq:bound_fg} and the Young inequality
  \eqref{eq:Young} we have that  
  \begin{align*}
    & \|U^j\|^2 - \|U^{j-1}\|^2
    \leq 2h
    \Big(
      L \big( 1 + \|U^j\|^2 \big)
      -
      \frac{4q-3}2\triple g(U^j) \triple^2
      \Big)
    + h
    \triple
      g(U^{j-1})
    \triple^2.
  \end{align*}
  Summing over $j$ from $1$ to   $n$ gives
  that 
  \begin{align*}
    & 
    \big(
     1 - 2Lh
    \big) 
    \|U^n\|^2 + (4q - 3)h \triple g(U^n) \triple^2 \\    
    &\quad \leq 2 LT + \|U^0 \|^2 +   h \triple g(U^0) \triple^2
    + (4-4q) h \sum_{j = 1}^{n-1} \triple g(U^{j}) \triple^2
    + 2 L h \sum_{j=1}^{n-1} \|U^j\|^2.
  \end{align*}
  Since $q \in [1, \infty)$ it holds $1 \le 4q - 3$ and $4 - 4q \le 0$. By
  elementary bounds we get 
  \begin{align*} 
    &\|U^n\|^2 + h \triple g(U^n) \triple^2 \leq  
    \frac{2LT + \|U^{0}\|^2 + h
    \triple g(U^0) \triple^2}{1 - 2Lh} + \frac{2 L}{1 - 2Lh}h \sum_{j=1}^{n-1} \| U^j \|^2.
  \end{align*}
  We conclude by a use of the discrete Gronwall Lemma \eqref{eq:Gronwall}.
  \qed
\end{proof}

\subsection{Stability of the backward Euler-Maruyama scheme}
\label{subsec:BEMstability}

For the formulation of the stability Theorem~\ref{thm:BEMstability} we define
for $h\in(0,1)$ and $V\in\mathcal{G}_h^2$ the \emph{local
truncation error} of $V$ given by 
\begin{align}
  \label{eq:rho_Eul}
  \begin{split}
    \rho_h^{\mathrm{BEM}}(V) 
    &:= \sum_{j = 1}^{N_h} \big\| \varrho_{h}^j(V) \big\|^2
    + \frac{1}{h} \sum_{j = 1}^{N_h} \big\| P_h^{j-1} \varrho_h^j(V) 
    \big\|^2,
  \end{split}
\end{align}
where the \emph{local residuals} $\varrho^j_h(V)$ of $V$ are defined as
\begin{align*}
  \varrho^j_h(V) := h f(V^j) + g(V^{j-1}) \Delta_h W^j - V^{j} + V^{j-1},
\end{align*}
for $j\in\{2,\ldots,N_h\}$. Note that $\varrho^j_h(V) \in
L^2(\Omega,\F_{t_j},\P;\R^m)$ for every $V \in \mathcal{G}_h^2$. We also
introduce a maximal step size $h_E$ for the stability, which guarantees that
the stability constant in Theorem~\ref{thm:BEMstability} does not depend on
$h$. It is given by 
\begin{align}\label{eq:h0}
  h_E =\frac{1}{ \max\{4L,2\} }.
\end{align}
Using the same arguments as in Theorem~\ref{thm:Eulerapriori} the assertion of
Theorem~\ref{thm:BEMstability} stays true for all $h \in
(0,\frac{1}{2L})$ but with a constant $C$ depending on $\frac{1}{1 - 2hL}$ as
in Theorem~\ref{thm:Eulerapriori}.

\begin{theorem}
  \label{thm:BEMstability}
  Let Assumption~\ref{as:fg} hold with $L\in(0,\infty)$, 
  $\eta \in (\tfrac12, \infty)$. For all $h\in(0,h_E]$, $U\in \mathcal{G}_h^2$
  satisfying 
  \eqref{eq:BEM3}, $V\in \mathcal{G}_h^2$, and all $n
  \in \{1,\ldots,N_h\}$ it holds that  
  \begin{align*}
    &\|U^n - V^n \|^2 + h \triple  g(U^{n}) - g(V^{n}) \triple^2\\
    &\quad \le C \exp \big( 2 (1+2L) t_n \big)
    \Big( \|U^0 - V^0\|^2 + h \triple g(U^{0}) - g(V^{0}) \triple^2 +
    \rho_h^{\mathrm{BEM}}(V) \Big), 
  \end{align*}
  where $C= \max\{ 3 , 4 \eta, \frac{4 \eta}{2 \eta - 1} \}$.  
\end{theorem}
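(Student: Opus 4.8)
The plan is to run a discrete Gronwall argument driven by the algebraic identity \eqref{eq:emmrich1}, exactly as in Theorem~\ref{thm:Eulerapriori} but now tracking the local residuals $\varrho_h^j(V)$. First I would apply Lemma~\ref{lemma:Euler} to the pair $(U,V)$, writing $E=U-V$, and rewrite the term $-2(V^j-V^{j-1},E^j)$ using the definition of $\varrho_h^j(V)$: since $V^j-V^{j-1}=hf(V^j)+g(V^{j-1})\Delta_hW^j-\varrho_h^j(V)$, this produces the difference $2h(f(U^j)-f(V^j),E^j)$, the difference of the noise terms $2(g(U^{j-1})-g(V^{j-1}))\Delta_hW^j$ paired with $E^j-E^{j-1}$ and with $E^{j-1}$, and the residual contribution $2(\varrho_h^j(V),E^j)$. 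Taking expectations kills the centered martingale-type terms $Z^j$ and the cross term involving $g(V^{j-1})\Delta_hW^j$ against $E^{j-1}$; the surviving noise term is $2\langle(g(U^{j-1})-g(V^{j-1}))\Delta_hW^j, E^j-E^{j-1}\rangle$, which by It\^o isometry and the independence structure can be estimated via weighted Young \eqref{eq:Young} against $\|E^j-E^{j-1}\|^2$ (to be absorbed by the left-hand side) plus a constant times $h\triple g(U^{j-1})-g(V^{j-1})\triple^2$.

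Next I would handle the drift term $2h\langle f(U^j)-f(V^j),E^j\rangle$ together with the recovered Hilbert--Schmidt term by invoking the global monotonicity condition \eqref{eq:onesided}: it gives $2h\langle f(U^j)-f(V^j),E^j\rangle \le 2Lh\|E^j\|^2 - 2\eta h\triple g(U^j)-g(V^j)\triple^2$. The factor $\eta>\tfrac12$ is what provides enough room: after moving the absorbed $\|E^j-E^{j-1}\|^2$ term and using $h\le h_E$, one is left with a clean recursion of the schematic form
\begin{align*}
  \|E^j\|^2 - \|E^{j-1}\|^2 + c_1 h\triple g(U^j)-g(V^j)\triple^2
  \le c_2 h\|E^j\|^2 + c_3 h\triple g(U^{j-1})-g(V^{j-1})\triple^2
  + c_4\|\varrho_h^j(V)\|^2 + 2\langle P_h^{j-1}\varrho_h^j(V), E^{j-1}\rangle,
\end{align*}
where the last term arises because $E^{j-1}$ is $\F_{t_{j-1}}$-measurable, so $\langle\varrho_h^j(V),E^{j-1}\rangle=\langle P_h^{j-1}\varrho_h^j(V),E^{j-1}\rangle$, and this in turn is bounded by $\tfrac1{2h}\|P_h^{j-1}\varrho_h^j(V)\|^2 + \tfrac h2\|E^{j-1}\|^2$ via weighted Young. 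Summing from $1$ to $n$ telescopes the $\|E^j\|^2$ difference, the $g$-difference terms combine (because $c_1$ can be arranged $\ge c_3$ for $q\ge1$, mirroring the $4-4q\le 0$ step in Theorem~\ref{thm:Eulerapriori}), and the residual terms assemble exactly into $\rho_h^{\mathrm{BEM}}(V)$ as defined in \eqref{eq:rho_Eul}. A final application of the discrete Gronwall lemma \eqref{eq:Gronwall} yields the stated exponential bound, with the constant $C=\max\{3,4\eta,\tfrac{4\eta}{2\eta-1}\}$ emerging from the various Young weights chosen to balance $\eta$ against $\tfrac12$.

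The main obstacle is the careful bookkeeping of the noise term $2\langle(g(U^{j-1})-g(V^{j-1}))\Delta_hW^j, E^j-E^{j-1}\rangle$: one cannot simply bound it, since $\|E^j-E^{j-1}\|^2$ on the left has coefficient $1$ and naive Young estimates would consume more than that. The trick is to split $E^j-E^{j-1}$ using the recursion itself into the part already exposed (the increments of $U$ and $V$), isolate the martingale increment $(g(U^{j-1})-g(V^{j-1}))\Delta_hW^j$, and use that its $L^2$-norm squared equals $h\triple g(U^{j-1})-g(V^{j-1})\triple^2$ by the independence of $\Delta_hW^j$ from $\F_{t_{j-1}}$; then the cross terms with the drift and residual parts are genuinely $\mathcal O(h)$ and absorbable. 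Choosing the Young parameters so that all absorbed coefficients stay strictly below the available $1$ (for the increment term) and below $2\eta$ (for the $g(U^j)$ term), uniformly for $h\le h_E=\tfrac1{\max\{4L,2\}}$, is the delicate point and is precisely why $h_E$ is defined with the constants $4L$ and $2$.
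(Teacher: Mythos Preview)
Your overall plan is correct and is essentially the paper's argument: apply Lemma~\ref{lemma:Euler}, take expectations, use \eqref{eq:onesided} on the drift, the projector trick on $\langle\varrho^j,E^{j-1}\rangle$, sum, and close with \eqref{eq:Gronwall}. Two points deserve correction, however.

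First, your ``main obstacle'' paragraph manufactures a difficulty that is not there. The paper does \emph{not} split $E^j-E^{j-1}$ via the recursion. Instead it groups the noise increment and the residual \emph{before} applying Young: since $2\langle\varrho^j,E^j\rangle = 2\langle\varrho^j,E^j-E^{j-1}\rangle+2\langle\varrho^j,E^{j-1}\rangle$, one obtains the single term $2\langle\Delta g^{j-1}\Delta W^j+\varrho^j,\,E^j-E^{j-1}\rangle$, and plain Young with weight $1$ gives $\|\Delta g^{j-1}\Delta W^j+\varrho^j\|^2+\|E^j-E^{j-1}\|^2$, the latter cancelling exactly against the left-hand side. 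Then a second weighted Young with parameter $\mu=2\eta-1$ splits the squared sum into $(1+\mu)h\triple\Delta g^{j-1}\triple^2+(1+\tfrac1\mu)\|\varrho^j\|^2=2\eta\,h\triple\Delta g^{j-1}\triple^2+\tfrac{2\eta}{2\eta-1}\|\varrho^j\|^2$. No substitution of the recursion into $E^j-E^{j-1}$ is needed; your proposed route would introduce the awkward cross term $2h\langle\Delta g^{j-1}\Delta W^j,\Delta f^j\rangle$, where $\Delta f^j$ depends on $U^j$ and hence on $\Delta W^j$, so it does not vanish and would be hard to control.

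Second, your remark that ``$c_1\ge c_3$ for $q\ge1$, mirroring the $4-4q\le 0$ step'' is misplaced: $q$ does not enter the stability proof at all, since \eqref{eq:onesided} (not the coercivity \eqref{eq:bound_fg}) is the relevant estimate here. With the choice $\mu=2\eta-1$ above one gets exactly $c_1=c_3=2\eta$, and the $g$-difference sums telescope cleanly to $-2\eta h\triple\Delta g^n\triple^2+2\eta h\triple\Delta g^0\triple^2$, which is what produces the constant $4\eta$ and the factor $\tfrac{4\eta}{2\eta-1}$ in the final $C$.
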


\begin{proof}
  Fix arbitrary $h \in (0, h_E]$ and $V \in \mathcal{G}^2_h$. To ease the
  notation we suppress the dependence of $h$ and $V$ and simply write, for
  instance, $\Delta W^j := \Delta_h W^j$. We also write $E^j := U^j - V^j$ and
  \begin{align}\label{eq:Delta_fg}
    \Delta f^j:=f(U^j)-f(V^j)
    ,\quad
    \Delta g^j:=g(U^j)-g(V^j)
    ,\quad
    j\in\{0,\dots, N_h\}.
  \end{align}
  From Lemma~\ref{lemma:Euler} we get after
  taking expectations that
  \begin{align*}
    &  \| E^j \|^2 - \| E^{j-1} \|^2
    + \|E^j- E^{j-1}\|^2\\
    &\quad = 2 h \big\langle \Delta f^j,E^j\big\rangle
    + 2 \big\langle \Delta g^{j-1}\Delta W^j + \varrho^j, E^j-E^{j-1}
    \big\rangle + 2 \langle \varrho^j,E^{j-1} \rangle.
  \end{align*}
  In order to treat the residual term we first notice that
  $P_h^{j-1} E^{j-1}=E^{j-1}$. Then, by taking the adjoint of the projector
  and by applying the weighted Young inequality \eqref{eq:Young} with $\nu=h
  >0$ we obtain 
  \begin{align*}
    2 \big\langle \varrho^j, E^{j-1} \big\rangle 
    &= 2\big\langle P_h^{j-1} \varrho^j, E^{j-1} \big\rangle
    \le \frac{1}{h} \big\| P_h^{j-1} \varrho^j \big\|^2
    + h \big\| E^{j-1} \big\|^2.
  \end{align*}
  Moreover, further applications of the Cauchy-Schwarz inequality, the triangle
  inequality, and the weighted Young inequality \eqref{eq:Young} with $\nu =
  \mu$ yield 
  \begin{align*}
    &2 \big\langle \Delta g^{j-1}\Delta W^j + \varrho^j, E^j-E^{j-1}
    \big\rangle\\
    &\quad \le \big\| \Delta g^{j-1}\Delta W^j + \varrho^j \big\|^2
    + \big\| E^j-E^{j-1} \big\|^2 \\
    &\quad \le \big( 1 + \mu \big) \big\| \Delta g^{j-1}\Delta W^j \big\|^2
    + \Big(1 + \frac{1}{\mu} \Big) \big\| \varrho^j \big\|^2 
    + \big\| E^j - E^{j-1} \big\|^2. 
  \end{align*}
  Therefore, together with the global monotonicity condition
  \eqref{eq:onesided} this gives  
  \begin{align*}
    \|E^j\|^2 - \|E^{j-1}\|^2
    & \leq 2h L \big\| E^j \big\|^2
    - 2 h \eta \triple \Delta g^j \triple^2
    + (1 + \mu) h \triple \Delta g^{j-1}\triple^2\\
    &\quad + h \big\| E^{j-1} \big\|^2 
    + \Big(1 + \frac1\mu\Big) \big\| \varrho^j \big\|^2 
    + \frac{1}{h} \big\| P_h^{j-1} \varrho^j \big\|^2. 
  \end{align*}
  Setting $\mu= 2\eta-1 >0$ gives that $1 + \mu = 2 \eta$. Then, summing
  over $j$ from $1$ to $n$ and thereby identifying two telescoping sums yields
  \begin{align*}
    & \big(1-2Lh\big)\|E^n\|^2 + 2 \eta h \triple \Delta g^n \triple^2 \\ 
    &\quad \leq (1+h)\|E^{0}\|^2 + 2 \eta h \triple \Delta g^{0}\triple^2
    + (1 + 2 L ) h \sum_{j=1}^{n-1}\big\| E^j \big\|^2 \\
    &\qquad + \frac{2\eta}{2\eta-1} \sum_{j=1}^n\big\| \varrho^j \big\|^2
    + \frac{1}{h} \sum_{j=1}^n\big\| P_h^{j-1} \varrho^j 
    \big\|^2.
  \end{align*}
  Since $1 - 2 Lh \ge 1 - 2 L h_E > \frac{1}{2}$ as well as $h\le h_E<\tfrac12$
  and $\eta>\tfrac12$ we obtain after some elementary transformations the
  inequality 
  \begin{align*}
    \|E^n\|^2 + h \triple \Delta g^n \triple^2 
    &\leq \max\Big\{ 3 , 4 \eta, \frac{4 \eta}{2\eta-1}\Big\}
    \Big(\|E^0\|^2 + h \triple \Delta g^{0} \triple^2 
    +  \rho_h^{\mathrm{BEM}}(V)\Big)\\
    &\qquad + 2  (1+2L)  h \sum_{j=1}^{n-1} \big\| E^j \big\|^2.
  \end{align*}
  The proof is completed by applying the discrete Gronwall Lemma
  \eqref{eq:Gronwall}. \qed
\end{proof}

\subsection{Consistency of the backward Euler-Maruyama scheme}
\label{subsec:BEMcons}
In this subsection we give an estimate for the local truncation error
\eqref{eq:rho_Eul} of the BEM method. 
For the proof we first recall that the restriction $X|_{\tau_h}$ of the
exact solution to the temporal grid $\tau_h$ is an element of the space
$\mathcal{G}_h^2$, see Subsection~\ref{subsec:setting}.
Further, we make use of \cite[Lemma~5.5,
Lemma~5.6]{beynisaakkruse2014}, which provide estimates for the drift integral 
\begin{equation}
  \label{eq:bound_f}
  \begin{split}
    & \int_{\tau_1}^{\tau_2} \big\| f(X(\tau)) - f(X(s)) \big\| \diff{s}
    \leq C \Big( 1 + \sup_{t\in[0,T]} \big\| X(t)
    \big\|_{L^{4q-2}(\Omega;\R^m)}^{2q-1} \Big) |\tau_2-\tau_1|^{\frac32},
  \end{split}
\end{equation}
for all $\tau, \tau_1,\tau_2\in [0,T]$ with $\tau_1 \le \tau \le \tau_2$, 
and for the stochastic integral
\begin{equation}
  \label{eq:bound_g}
  \begin{split}
    & \Big\| \int_{\tau_1}^{\tau_2} \big( g(X(\tau_1)) - g(X(s)) \big)
    \diff{W(s)} \Big\| \leq C \Big( 1 +  \sup_{t\in[0,T]} \big\| X(t)
    \big\|_{L^{4q-2}(\Omega;\R^m)}^{2q-1} \Big) |\tau_2-\tau_1|,
  \end{split}
\end{equation}
for all $\tau_1,\tau_2\in [0,T]$ with $\tau_1 \le \tau_2$, respectively.

\begin{theorem}
  \label{thm:BEMconsistency}
  Let Assumption~\ref{as:fg} hold and let $X|_{\tau_h}$ be the restriction of
  the exact solution to \eqref{eq:SDE} to the temporal grid $\tau_h$. Then there
  exists $C>0$ such that  
  \begin{align*} 
    \rho_h^\mathrm{BEM}(X|_{\tau_h}) \leq C h, \quad h\in(0,1), 
  \end{align*}
  where the local truncation is defined in \eqref{eq:rho_Eul}.
\end{theorem}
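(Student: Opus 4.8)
The plan is to estimate the two sums in the definition \eqref{eq:rho_Eul} of $\rho_h^{\mathrm{BEM}}(X|_{\tau_h})$ separately, using the representation of the local residual coming from the SDE \eqref{eq:SDE}. Writing $X^j := X(t_j)$, the exact solution satisfies
\begin{align*}
  X^{j} - X^{j-1} = \int_{t_{j-1}}^{t_j} f(X(s)) \diff{s} + \int_{t_{j-1}}^{t_j} g(X(s)) \diff{W(s)},
\end{align*}
so the local residual becomes
\begin{align*}
  \varrho_h^j(X|_{\tau_h}) = \int_{t_{j-1}}^{t_j} \big( f(X^j) - f(X(s)) \big) \diff{s} + \int_{t_{j-1}}^{t_j} \big( g(X^{j-1}) - g(X(s)) \big) \diff{W(s)}.
\end{align*}
First I would bound $\|\varrho_h^j(X|_{\tau_h})\|$ by splitting along this decomposition: the drift part is controlled by the Minkowski inequality together with \eqref{eq:bound_f} applied with $\tau_1 = t_{j-1}$, $\tau = \tau_2 = t_j$, giving a term of order $h^{3/2}$; the stochastic part is controlled directly by \eqref{eq:bound_g} with $\tau_1 = t_{j-1}$, $\tau_2 = t_j$, giving a term of order $h$. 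Squaring and summing over the at most $N_h \le T/h$ indices $j$ yields $\sum_j \|\varrho_h^j\|^2 \le C(h^3 + h^2) N_h \le C h$, using the uniform moment bound \eqref{eq:moment} to absorb the $\sup_t \|X(t)\|_{L^{4q-2}}^{2q-1}$ factors into the constant.

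For the second, more delicate sum, the key observation is that applying the conditional expectation $P_h^{j-1} = \E[\,\cdot\mid\F_{t_{j-1}}]$ annihilates the stochastic integral: since $g(X^{j-1})$ is $\F_{t_{j-1}}$-measurable and the Itô integral $\int_{t_{j-1}}^{t_j}(g(X^{j-1}) - g(X(s)))\diff{W(s)}$ is a martingale increment, $P_h^{j-1}$ of the stochastic part vanishes. (One should note here that $P_h^{j-1}$ applied to $\int_{t_{j-1}}^{t_j} g(X(s))\diff{W(s)}$ is zero by the martingale property, and $P_h^{j-1}$ fixes the $\F_{t_{j-1}}$-measurable piece $g(X^{j-1})\Delta_h W^j$ minus its mean — actually more carefully, $P_h^{j-1}[g(X^{j-1})\Delta_h W^j] = g(X^{j-1}) \E[\Delta_h W^j\mid\F_{t_{j-1}}] = 0$, so the whole stochastic contribution to $P_h^{j-1}\varrho_h^j$ is zero.) Hence only the drift part survives the projection:
\begin{align*}
  P_h^{j-1} \varrho_h^j(X|_{\tau_h}) = P_h^{j-1} \int_{t_{j-1}}^{t_j} \big( f(X^j) - f(X(s)) \big) \diff{s},
\end{align*}
and since conditional expectation is a contraction on $L^2$, $\|P_h^{j-1}\varrho_h^j\| \le \int_{t_{j-1}}^{t_j} \|f(X^j) - f(X(s))\| \diff{s} \le C h^{3/2}$ again by \eqref{eq:bound_f}. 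Squaring gives $\|P_h^{j-1}\varrho_h^j\|^2 \le C h^3$, so $\frac{1}{h}\sum_j \|P_h^{j-1}\varrho_h^j\|^2 \le \frac{1}{h} \cdot N_h \cdot C h^3 \le C h$.

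Combining the two bounds yields $\rho_h^{\mathrm{BEM}}(X|_{\tau_h}) \le C h$ for all $h \in (0,1)$, as claimed. The main obstacle — really the only subtle point — is correctly identifying that the $\frac{1}{h}$-weighted term does not blow up: this hinges entirely on the martingale cancellation $P_h^{j-1}$ of the stochastic integral, which upgrades the naive $O(h)$ bound on that term to $O(h^{3/2})$, exactly compensating the $\frac1h$ weight and the $N_h \sim h^{-1}$ summation. Everything else is a routine application of Minkowski's inequality, the estimates \eqref{eq:bound_f}–\eqref{eq:bound_g} imported from \cite{beynisaakkruse2014}, and the moment bound \eqref{eq:moment}.
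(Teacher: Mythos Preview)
Your proposal is correct and follows essentially the same route as the paper: you use the same decomposition of $\varrho_h^j(X|_{\tau_h})$ via the SDE, the same estimates \eqref{eq:bound_f}--\eqref{eq:bound_g}, the martingale cancellation under $P_h^{j-1}$, and the contraction property of conditional expectation. The only cosmetic difference is that the paper phrases the intermediate step as a uniform bound $\max_j(\|\varrho_h^j\|^2 + h^{-1}\|P_h^{j-1}\varrho_h^j\|^2)\le Ch^2$ before summing, whereas you sum directly; the substance is identical.
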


\begin{proof}
  Recall the definitions of $\rho_h^\mathrm{BEM}(X|_{\tau_h})$ and
  $\varrho_h^j(X|_{\tau_h})$ from \eqref{eq:rho_Eul}. It suffices to show 
  \begin{equation}
    \label{eq:BEMconv1}
    \begin{split}
      \max_{j\in\{1,\dots,N_h\}}
      \Big( \big\| \varrho^j_h(X|_{\tau_h}) \big\|^2 +
      \frac{1}{h} \big\|P_h^{j-1}\varrho^j_h(X|_{\tau_h})\big\|^2 \Big) 
      \leq C h^2.
    \end{split}
  \end{equation}
  Inserting \eqref{eq:SDE} it holds for every $j\in\{1,\dots,N_h\}$ that
  \begin{align*}
    \varrho^j_h(X|_{\tau_h}) &= 
    \int_{t_{j-1}}^{t_j} \big( f(X(t_j)) - f(X(s)) \big) \diff{s}\\
    &\quad + \int_{t_{j-1}}^{t_j} \big( g(X(t_{j-1})) - g(X(s)) \big)
    \diff{W(s)}\\ 
    P_h^{j-1} \varrho^j_h(X|_{\tau_h}) &= 
    \E \Big[ \int_{t_{j-1}}^{t_j} \big( f(X(t_{j})) - f(X(s)) \big) \diff{s}
    \big| \F_{t_{j-1}} \Big].
  \end{align*}
  Note that inequalities \eqref{eq:bound_f} and \eqref{eq:bound_g} apply 
  to \eqref{eq:BEMconv1} due to the moment bound \eqref{eq:moment}. These
  estimates together with an application of the triangle inequality and the
  fact that $\| \E [ V | \F_{t_{j-1}} ] \| \le \| V \|$ for every $V \in \LOR$ 
  completes the proof of \eqref{eq:BEMconv1}. 
  \qed
\end{proof}

\subsection{Mean-square convergence of the backward Euler-Maruyama method}
\label{subsec:BEMconvergence}

Here we consider the numerical approximations $(X_h^j)_{j=0}^{N_h}$,
$h\in(0,h_E]$, uniquely determined by the backward Euler-Maruyama method
\eqref{eq:BEM} with a corresponding family of initial values
$(X_h^0)_{h\in(0,h_E]}$. Recall from \eqref{eq:h0} that $h_E=\tfrac1{2(4L+1)}$.
This family is assumed to satisfy the following assumption.  
\begin{assumption}\label{as:BEMinitial}
The family of initial values $(X_h^0)_{h\in(0,h_E]}$ satisfies
\begin{align}
  \label{eq:BEMinitial_cons2}
  X_h^0, f(X_h^0) 
  \in
  L^2(\Omega,\mathcal{F}_0,\P;\R^m)
  ,\quad
   g(X_h^0)\in 
  L^2(\Omega,\mathcal{F}_0,\P;\R^{m\times d}),
\end{align}
for all $h\in(0,h_E]$ and is consistent of order $\tfrac12$ in the sense that 
\begin{align}
  \label{eq:BEMinitial_cons}
  \|X(0)-X_h^0\|^2
  +
  h\triple g(X(0)) - g(X_h^0) \triple^2
  =\mathcal{O}(h),
\end{align}
as $h\downarrow0$, where $X$ is the exact solution to \eqref{eq:SDE}. 
\end{assumption}

Note that Assumption~\ref{as:BEMinitial} is obviously satisfied for the choice
$X_h^0 := X_0$ for every $h \in (0,h_E]$. This said we are now ready to state the
main result of this section. 

\begin{theorem}
  \label{thm:BEMconvergence}
  Let Assumptions~\ref{as:fg} and \ref{as:BEMinitial} hold, let $X$ be the
  exact solution to \eqref{eq:SDE} and let $(X_h^j)_{j=0}^{N_h}$, $h \in
  (0,h_E]$, be the family of backward Euler-Maruyama approximations determined by
  \eqref{eq:BEM} with initial values $(X_h^0)_{h\in(0,h_E]}$. Then, the 
  backward Euler-Maruyama method is mean-square convergent of order $\tfrac12$,
  more precisely, there exists $C>0$ such that 
  \begin{align*}
    \max_{n\in\{0,\dots,N_h\}}
    \|X_h^n-X(nh)\| \leq C\sqrt{h} ,\quad h\in(0,h_E].
  \end{align*}
\end{theorem}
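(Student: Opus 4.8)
The plan is to combine the three key ingredients that have just been assembled: the well-posedness Theorem~\ref{thm:wellposedness} (which guarantees that the backward Euler--Maruyama iterates $X_h^j$ exist, are unique, and lie in $\mathcal{G}_h^2$ for every $h\in(0,h_E]$, since $h_E<\tfrac1{2L}<\tfrac1L$), the stability Theorem~\ref{thm:BEMstability}, and the consistency Theorem~\ref{thm:BEMconsistency}. The entire argument is then a short one-line application: identify the numerical solution as $U:=X_h$, which satisfies the recursion \eqref{eq:BEM3}, and take as the comparison grid function $V:=X|_{\tau_h}$, the restriction of the exact solution to $\tau_h$, which indeed belongs to $\mathcal{G}_h^2$ as recorded in Subsection~\ref{subsec:setting}.

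The key steps, in order, are as follows. First I would invoke Theorem~\ref{thm:BEMstability} with this choice of $U$ and $V$; it yields, for every $n\in\{0,\dots,N_h\}$,
\begin{align*}
  \|X_h^n - X(t_n)\|^2
  &\le C \exp\big(2(1+2L)t_n\big)\Big( \|X_h^0 - X(0)\|^2
  + h\triple g(X_h^0) - g(X(0))\triple^2 \\
  &\qquad + \rho_h^{\mathrm{BEM}}(X|_{\tau_h}) \Big).
\end{align*}
Second, I would bound each of the three terms on the right: the first two together are $\mathcal{O}(h)$ by Assumption~\ref{as:BEMinitial}, specifically \eqref{eq:BEMinitial_cons}; the third, $\rho_h^{\mathrm{BEM}}(X|_{\tau_h})$, is $\mathcal{O}(h)$ by Theorem~\ref{thm:BEMconsistency}. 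Third, since $t_n\le T$ the exponential factor is bounded by the $h$-independent constant $\exp\big(2(1+2L)T\big)$. Combining these observations gives a constant $\tilde C>0$, independent of $h$ and $n$, with $\|X_h^n - X(t_n)\|^2 \le \tilde C h$ uniformly in $n\in\{0,\dots,N_h\}$; taking square roots and then the maximum over $n$ yields the claim with $C=\sqrt{\tilde C}$.

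There is essentially no obstacle here — all the substantive work (the a priori bounds, the $G$-stability argument built on \eqref{eq:emmrich1}, and the It\^o-calculus estimates of the local residuals via \eqref{eq:bound_f}--\eqref{eq:bound_g}) has been front-loaded into the preceding subsections, so the convergence theorem is a pure assembly step. The only points requiring a moment of care are bookkeeping ones: checking that $h_E$ lies in the admissible range for both Theorem~\ref{thm:wellposedness} (needs $h<\tfrac1L$, and indeed $h_E=\tfrac1{\max\{4L,2\}}\le\tfrac1{4L}$) and Theorem~\ref{thm:BEMstability}, and noting that the projector-norm term $\frac1h\|P_h^{j-1}\varrho_h^j\|^2$ inside $\rho_h^{\mathrm{BEM}}$ is already accounted for in Theorem~\ref{thm:BEMconsistency}, so no separate estimate is needed. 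I would therefore keep the proof to three or four lines, citing Theorems~\ref{thm:BEMstability} and \ref{thm:BEMconsistency} and Assumption~\ref{as:BEMinitial} explicitly, and concluding with the square-root.
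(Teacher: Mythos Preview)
Your proposal is correct and follows essentially the same route as the paper's own proof: apply the stability Theorem~\ref{thm:BEMstability} with $U=X_h$ and $V=X|_{\tau_h}$, then bound the initial-value terms by Assumption~\ref{as:BEMinitial} and the local truncation error by Theorem~\ref{thm:BEMconsistency}. Your write-up is in fact a bit more careful than the paper's (you explicitly check the step-size restrictions and handle the exponential factor via $t_n\le T$), but the argument is the same.
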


\begin{proof}
  For $h\in(0,h_E]$, we apply
  Theorem~\ref{thm:BEMstability} with $U=(X_h^j)_{j=0}^{N_h}\in\mathcal{G}_h^2$
  and $V=X|_{\tau_h}=(X(t_j))_{j=0}^{N_h}\in\mathcal{G}_h^2$ and get that there
  is a constant $C>0$, not depending on $h$, such that 
  \begin{align*}
    \| X_h^n - X(nh)\|^2
    &\le C \big( \| X_h^0 - X(0) \|^2 + h \triple g(X_h^{0}) - g(X(0))
    \triple^2 + \rho_h^\mathrm{BEM}(X|_{\tau_h}) \big).
  \end{align*}
  The first and second term on the right hand side are of order
  $\mathcal{O}(h)$ by 
  Assumption~\ref{as:BEMinitial}. Since the same holds true for the consistency
  term $\rho_h^\mathrm{BEM}(X|_{\tau_h})$ by Theorem~\ref{thm:BEMconsistency}
  the proof is completed. \qed
\end{proof}

\section{The BDF2-Maruyama method}
\label{sec:BDF2}
In this section we follow the same procedure as in Section~\ref{sec:BEM} with
identity \eqref{eq:emmrich2} in place of \eqref{eq:emmrich1}. Every
result in Section~\ref{sec:BEM} has its counterpart here for the
BDF2-Maruyama method. As the multi-step method involves more terms the proofs
in this section are naturally a bit more technical, but rely in principle on the
same arguments as in the previous section.

\subsection{Basic properties of the BDF2-Maruyama method}
\label{subsec:existence}
Here and in Subsection~\ref{subsec:stability} our results concern
$U\in\mathcal{G}_h^2$, $h\in(0,\tfrac3{2L})$, satisfying 
\begin{equation}
  \label{eq:BDF2U}
  \begin{split}
    \frac{3}{2} U^{j} - 2 U^{j-1} + \frac{1}{2} U^{j-2} 
    &= h f(U^j) + \frac{3}{2} g( U^{j-1} )
    \Delta_h W^j\\ 
  &\quad- \frac{1}{2}  g(U^{j-2} ) \Delta_h W^{j-1},
    \quad j\in\{2,\dots,N_h\},\\
  \end{split}
\end{equation}
with initial values $U^\ell \in
L^{2}(\Omega,\mathcal{F}_{t_\ell},\P;\R^m)$ such that $f(U^\ell) \in
L^{2}(\Omega,\mathcal{F}_{t_\ell},\P;\R^m)$ and $g(U^\ell) \in
L^{2}(\Omega,\mathcal{F}_{t_\ell},\P;\R^{m\times d})$ for $\ell\in\{0,1\}$.
Here $L$ is the parameter of Assumption~\ref{as:fg} and from
Theorem~\ref{thm:wellposedness} there exists for every $h\in(0,\tfrac{3}{2L})$ a
unique $U\in\mathcal{G}_h^2$ satisfying \eqref{eq:BDF2U}. The initial values
$(U^0,U^1)$ are not necessarily related to the initial value $X_0$ of
\eqref{eq:SDE}.

Next, we state an analogue of Lemma~\ref{lemma:Euler}, used for the proof of
the a priori estimate in Theorem~\ref{thm:apriori} and the stability result in
Theorem~\ref{thm:stability}.

\begin{lemma}\label{lemma:emmrich}
  For all $h\in(0,\tfrac3{2L})$ and $U,V\in \mathcal{G}_h^2$ with $U$ satisfying
  \eqref{eq:BDF2U}  it holds for all $j \in \{2,\ldots,N_h\}$ $\P$-almost
  surely that    
  \begin{align*}
    & |E^j|^2 - |E^{j-1}|^2
    + |2E^j-E^{j-1}|^2
    - |2E^{j-1}-E^{j-2}|^2
    + |E^j-2E^{j-1}+E^{j-2}|^2\\
    & = 
    4h \big( f(U^j), E^j \big)
    + 2
    \big(
      g(U^{j-1})\Delta_h W^{j} - g(U^{j-2})\Delta_h W^{j-1} , E^j - 2E^{j-1} +
      E^{j-2} \big)\\
     & \quad
     + 2
    \big(
      g(U^{j-1})\Delta_h W^j , 2E^j-E^{j-1}
    \big)
     - 2
    \big(
      g(U^{j-2})\Delta_h W^{j-1} , 2E^{j-1}-E^{j-2}
    \big)\\
    & \quad 
    -4 \Big( \frac32 V^j - 2 V^{j-1} + \frac12 V^{j-2}, E^j
    \Big)
    +
    Z^j,
  \end{align*}
  where $E:=U-V$ and $(Z^j)_{j\in\{2,\dots N_h\}}$,
  are the centered random variables given by
  \begin{align*}
    Z^j
    :=
    \big(
      g(U^{j-1})\Delta_hW^{j},6E^{j-1}-2E^{j-2}
    \big).
  \end{align*}
\end{lemma}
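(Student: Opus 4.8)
The plan is to prove the identity \eqref{eq:emmrich2}-driven expansion in essentially the same way as Lemma~\ref{lemma:Euler}, by combining the algebraic identity \eqref{eq:emmrich2} with the recursion \eqref{eq:BDF2U}. First I would apply \eqref{eq:emmrich2} with the substitution $u_3=E^j$, $u_2=E^{j-1}$, $u_1=E^{j-2}$, which produces, up to the factor $4$, the left-hand side of the asserted identity equal to $4\big(\tfrac32E^j-2E^{j-1}+\tfrac12E^{j-2},E^j\big)$. The key observation is that the BDF2 combination is linear, so $\tfrac32E^j-2E^{j-1}+\tfrac12E^{j-2}=\big(\tfrac32U^j-2U^{j-1}+\tfrac12U^{j-2}\big)-\big(\tfrac32V^j-2V^{j-1}+\tfrac12V^{j-2}\big)$.

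Next I would substitute the recursion \eqref{eq:BDF2U} for the $U$-part of this expression, replacing $\tfrac32U^j-2U^{j-1}+\tfrac12U^{j-2}$ by $hf(U^j)+\tfrac32g(U^{j-1})\Delta_hW^j-\tfrac12g(U^{j-2})\Delta_hW^{j-1}$. This yields
\begin{align*}
  4\Big(\tfrac32E^j-2E^{j-1}+\tfrac12E^{j-2},E^j\Big)
  &= 4h\big(f(U^j),E^j\big)
  + 4\big(\tfrac32 g(U^{j-1})\Delta_hW^j - \tfrac12 g(U^{j-2})\Delta_hW^{j-1},E^j\big)\\
  &\quad - 4\Big(\tfrac32V^j-2V^{j-1}+\tfrac12V^{j-2},E^j\Big).
\end{align*}
The remaining work is purely bookkeeping: I would rewrite the noise term $4\big(\tfrac32 g(U^{j-1})\Delta_hW^j - \tfrac12 g(U^{j-2})\Delta_hW^{j-1},E^j\big)$ by splitting $E^j$ cleverly so that the "stable" differenced combinations $E^j-2E^{j-1}+E^{j-2}$, $2E^j-E^{j-1}$, and $2E^{j-1}-E^{j-2}$ appear. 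Concretely, using $2E^j = (2E^j-E^{j-1}) + E^{j-1}$ and $E^j = (E^j-2E^{j-1}+E^{j-2}) + (2E^{j-1}-E^{j-2})$, one regroups the eight terms so that the first two lines of the claimed right-hand side emerge, and the leftover pieces involving only $E^{j-1},E^{j-2}$ collect into $\big(g(U^{j-1})\Delta_hW^j,6E^{j-1}-2E^{j-2}\big)=Z^j$ together with a term $-\big(g(U^{j-2})\Delta_hW^{j-1},\,\cdot\,\big)$ that must be checked to cancel against the corresponding piece from $-g(U^{j-2})\Delta_hW^{j-1}$; I would verify this cancellation term by term.

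Finally, as in Lemma~\ref{lemma:Euler}, I would remark that $Z^j$ is a well-defined centered integrable random variable: $\Delta_hW^j$ is independent of $\F_{t_{j-1}}$, while $g(U^{j-1})$, $E^{j-1}$, $E^{j-2}$ are all $\F_{t_{j-1}}$-measurable and square integrable (since $U,V\in\mathcal{G}_h^2$), so $\E[Z^j]=0$ and $Z^j\in L^1(\Omega)$. The main obstacle is not conceptual but organizational — keeping track of the precise linear combinations of $E^j,E^{j-1},E^{j-2}$ when redistributing the Wiener-increment terms so that exactly the differenced quantities in \eqref{eq:emmrich2} show up and the residual lands on the stated $Z^j$; this is a finite linear-algebra check with the coefficients $(\tfrac32,2,\tfrac12)$ and $(1,2,1)$, so it is routine once set up carefully.
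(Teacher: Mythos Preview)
Your proposal is correct and follows essentially the same approach as the paper: apply the algebraic identity \eqref{eq:emmrich2} to $E$, split the BDF2 combination linearly into its $U$- and $V$-parts, substitute the recursion \eqref{eq:BDF2U} for the $U$-part, and then regroup the Wiener-increment terms so that the differenced quantities and the centered remainder $Z^j$ emerge. The paper in fact gives less detail on the rearrangement step than you do, simply writing ``adding, subtracting and rearranging terms completes the proof,'' and concludes with the same independence argument for $\E[Z^j]=0$.
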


\begin{proof}
>From the identity \eqref{eq:emmrich2} and since $U$ satisfy \eqref{eq:BDF2U} by
assumption it holds for $j\in\{2,\dots,N_h\}$ that
  \begin{align*}
    & |E^j|^2 - |E^{j-1}|^2
    + |2E^j-E^{j-1}|^2
    - |2E^{j-1}-E^{j-2}|^2
    + |E^j-2E^{j-1}+E^{j-2}|^2\\
    & = 4 \Big( \frac32 E^j - 2 E^{j-1} + \frac12 E^{j-2}, E^j
    \Big)\\ 
    & = 4 \Big( \frac32 U^j - 2 U^{j-1} + \frac12 U^{j-2}, E^j
    \Big)
    - 4 \Big( \frac32 V^j - 2 V^{j-1} + \frac12 V^{j-2}, E^j
    \Big)\\ 
    & = 4h \big( f(U^j), E^j \big)
    +
    6 
    \big(
      g(U^{j-1})\Delta_h W^j , E^j
    \big)
    - 2
    \big(
      g(U^{j-2})\Delta_h W^{j-1} , E^j
    \big)\\
    &\quad-4 \Big( \frac32 V^j - 2 V^{j-1} + \frac12 V^{j-2}, E^j
    \Big).
  \end{align*}
  Adding, subtracting and rearranging terms completes the proof of the asserted
  identity. Further note that $Z^j$ is centered due to the independence of the
  centered Wiener increment $\Delta_h W^{j}$ from $g(U^{j-1})$, $E^{j-1}$, and
  $E^{j-2}$. \qed
\end{proof}
  
\begin{theorem}
  \label{thm:apriori}
  Let Assumption~\ref{as:fg} hold with $L\in(0,\infty)$, 
  $\eta \in [\tfrac12, \infty)$, $q\in[1,\infty)$. For
  $h\in(0,\tfrac1{4L})$ denote by $U\in\mathcal{G}_h^2$ the unique adapted grid
  function satisfying \eqref{eq:BDF2U}. Then, for all 
  $n \in \{2,\ldots,N_h\}$ it holds that 
  \begin{align*} 
    &\|U^n\|^2 + h \triple g(U^n) \triple^2\\
    &\quad \leq 
    C_h
    \exp\Big(\frac{4Lt_n}{1-4Lh}\Big)
    \Bigg(
    1
    +
     \|U^{1}\|^2
    +
    \|2U^{1}-U^{0}\|   ^2
    +
    \sum_{l=0}^1
    \triple
      g(U^l)
    \triple^2
    \Bigg),
  \end{align*}
  where $C_h=4\max\{1,LT\}(1-4Lh)^{-1}$.
\end{theorem}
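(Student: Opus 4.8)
The plan is to mirror the proof of Theorem~\ref{thm:Eulerapriori}, with Lemma~\ref{lemma:emmrich} playing the role of Lemma~\ref{lemma:Euler}. First I would apply Lemma~\ref{lemma:emmrich} with $V=0$, so that $E=U$ and the term involving $V$ vanishes, and take expectations; since $Z^j$ is centered it drops out, leaving for each $j\in\{2,\dots,N_h\}$ an identity whose left-hand side is $\|U^j\|^2-\|U^{j-1}\|^2+\|2U^j-U^{j-1}\|^2-\|2U^{j-1}-U^{j-2}\|^2+\|U^j-2U^{j-1}+U^{j-2}\|^2$ and whose right-hand side is $4h\langle f(U^j),U^j\rangle$ plus three stochastic inner products. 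The drift term is treated exactly as before: the coercivity bound \eqref{eq:bound_fg} gives $4h\langle f(U^j),U^j\rangle\le 4hL(1+\|U^j\|^2)-2(4q-3)h\triple g(U^j)\triple^2$, and the constant part sums to at most $4LT$ (since $h N_h\le T$). Note that $(0,\tfrac1{4L})\subset(0,\tfrac3{2L})$, so Lemma~\ref{lemma:emmrich} is applicable.

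For the stochastic terms I would proceed in two groups. The term $2\langle g(U^{j-1})\Delta_hW^j-g(U^{j-2})\Delta_hW^{j-1},\,U^j-2U^{j-1}+U^{j-2}\rangle$ is bounded by the unweighted Young inequality \eqref{eq:Young}; the resulting $\|U^j-2U^{j-1}+U^{j-2}\|^2$ cancels the one already present on the left, and the noise part equals $h\triple g(U^{j-1})\triple^2+h\triple g(U^{j-2})\triple^2$ because, conditioned on $\F_{t_{j-1}}$, the increment $\Delta_hW^j$ is centered and independent of everything else, so the two summands are uncorrelated and $\|g(U^{j-1})\Delta_hW^j\|^2=h\triple g(U^{j-1})\triple^2$, as already used in Section~\ref{sec:wellposedness}.

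The two remaining terms, $2\langle g(U^{j-1})\Delta_hW^j,\,2U^j-U^{j-1}\rangle$ and $-2\langle g(U^{j-2})\Delta_hW^{j-1},\,2U^{j-1}-U^{j-2}\rangle$, are the real obstacle: in contrast to the backward Euler case they do not pair the ``new'' Wiener increment against an $\F_{t_{j-1}}$-measurable quantity, hence they do not vanish in expectation. The key observation is that, writing $b_j:=2\langle g(U^{j-1})\Delta_hW^j,2U^j-U^{j-1}\rangle$, the second of these terms is exactly $-b_{j-1}$, so upon summing over $j\in\{2,\dots,n\}$ the two groups telescope to $b_n-b_1$. The term $-b_1$ is a pure initial-data term, bounded by $h\triple g(U^0)\triple^2+\|2U^1-U^0\|^2$ via \eqref{eq:Young}, and $b_n$ is estimated by the weighted Young inequality with parameter $\nu$, giving $b_n\le \nu h\triple g(U^{n-1})\triple^2+\tfrac1\nu\|2U^n-U^{n-1}\|^2$; the quantity $\|2U^n-U^{n-1}\|^2$ is precisely what the telescoping of $\|2U^j-U^{j-1}\|^2-\|2U^{j-1}-U^{j-2}\|^2$ leaves on the left, so choosing $\nu=1$ makes it cancel exactly. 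I expect this telescoping-plus-tuned-Young step to be the crux; it is exactly where the extra structure of identity \eqref{eq:emmrich2} is exploited, and $\nu=1$ is essentially forced, since for $q=1$ one simultaneously needs $\nu\le1$ (to keep the $\triple g(U^{n-1})\triple^2$-coefficient nonnegative) and $\nu\ge1$ (for the cancellation).

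Summing the per-step inequality over $j$ from $2$ to $n$, identifying the telescoping sums $\|U^n\|^2-\|U^1\|^2$ and $\|2U^n-U^{n-1}\|^2-\|2U^1-U^0\|^2$ on the left and cancelling $\sum_{j=2}^n\|U^j-2U^{j-1}+U^{j-2}\|^2$, one is left with $\|U^n\|^2$, initial-data terms, and a linear combination of the $\triple g(U^k)\triple^2$. A short bookkeeping shows that, for $q\in[1,\infty)$ and $\nu=1$, the coefficient of $\triple g(U^k)\triple^2$ for interior indices $k\in\{2,\dots,n-1\}$ is $h(2(4q-3)-2)=8(q-1)h\ge0$, that of $\triple g(U^n)\triple^2$ is $2(4q-3)h\ge h$, and only $\triple g(U^0)\triple^2$ and $\triple g(U^1)\triple^2$ carry a negative coefficient $-2h$ — this is exactly where the weight $\tfrac{4q-3}{2}$ in \eqref{eq:bound_fg} is needed. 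Discarding the nonnegative $\triple g\triple^2$ contributions except $h\triple g(U^n)\triple^2$, moving the $j=n$ summand of $4hL\sum_{j=2}^n\|U^j\|^2$ to the left and using $1-4hL>0$, I obtain $a_n\le c+\tfrac{4L}{1-4hL}\,h\sum_{j=1}^{n-1}a_j$ with $a_n:=\|U^n\|^2+h\triple g(U^n)\triple^2$ and $c=(1-4hL)^{-1}\big(\|U^1\|^2+2\|2U^1-U^0\|^2+4LT+2h\triple g(U^0)\triple^2+2h\triple g(U^1)\triple^2\big)$. The discrete Gronwall Lemma \eqref{eq:Gronwall} yields $a_n\le c\exp\!\big(\tfrac{4Lt_n}{1-4hL}\big)$, and finally bounding $2h<2$ and $4LT\le 4\max\{1,LT\}$ replaces $c$ by $C_h\big(1+\|U^1\|^2+\|2U^1-U^0\|^2+\sum_{l=0}^1\triple g(U^l)\triple^2\big)$ with $C_h=4\max\{1,LT\}(1-4hL)^{-1}$, which is the assertion. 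As in Theorem~\ref{thm:Eulerapriori}, the parameter $\eta$ is not used anywhere in the argument.
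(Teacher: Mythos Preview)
Your proposal is correct and follows essentially the same route as the paper's proof: apply Lemma~\ref{lemma:emmrich} with $V=0$, use coercivity \eqref{eq:bound_fg} on the drift, the unweighted Young inequality plus orthogonality on the ``difference'' stochastic term, telescope the pair $b_j-b_{j-1}$ with $b_j=2\langle g(U^{j-1})\Delta_hW^j,2U^j-U^{j-1}\rangle$, bound the boundary terms $b_n$ and $-b_1$ by Young with $\nu=1$, do the $\triple g\triple^2$ bookkeeping using $q\ge1$, and conclude with the discrete Gronwall lemma. The paper phrases the telescoping step slightly differently (it sums first and then identifies ``three telescoping sums''), but the argument and the resulting constants are the same.
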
  
  
\begin{proof}
  Applying Lemma~\ref{lemma:emmrich} with $V=0$ and taking expectations yields  
  \begin{align*}
    & \|U^j\|^2 - \|U^{j-1}\|^2
    + \|2U^j-U^{j-1}\|^2
    - \|2U^{j-1}-U^{j-2}\|^2
    + \|U^j-2U^{j-1}+U^{j-2}\|^2\\
    & = 4h \big\langle f(U^j), U^j \big\rangle
    + 2
    \big\langle
      g(U^{j-1})\Delta_h W^{j} - g(U^{j-2})\Delta_h W^{j-1} , U^j - 2U^{j-1} +
      U^{j-2} 
    \big\rangle\\
     & \quad
      + 2
    \big\langle
      g(U^{j-1})\Delta_h W^j , 2U^j-U^{j-1}
    \big\rangle - 2
    \big\langle
      g(U^{j-2})\Delta_h W^{j-1} , 2U^{j-1} - U^{j-2}
    \big\rangle.
  \end{align*}
  From the Young inequality \eqref{eq:Young}, the orthogonality
  \begin{align*}
    \big\| g(U^{j-1}) \Delta_h W^j - g(U^{j-2})
    \Delta_h W^{j-1} \big\|^2 = h \triple g(U^{j-1}) \triple^2 + h \triple
    g(U^{j-2}) \triple^2, 
  \end{align*}
  and the coercivity condition \eqref{eq:bound_fg} we have that
  \begin{align*}
    & \|U^j\|^2 - \|U^{j-1}\|^2
    + \|2U^j-U^{j-1}\|^2
    - \|2U^{j-1}-U^{j-2}\|^2\\
    &\quad \leq 4h
    \Big(
      L \big( 1 + \|U^j\|^2 \big)
      -
      \frac{4q-3}2\triple g(U^j) \triple^2
      \Big)
    + h
    \triple
      g(U^{j-1})
    \triple^2    
    + h
    \triple
      g(U^{j-2})
    \triple^2 \\
     &\qquad + 2
    \big\langle
      g(U^{j-1})\Delta_h W^j , 2U^j - U^{j-1}
    \big\rangle
    - 2
    \big\langle
      g(U^{j-2})\Delta_h W^{j-1} , 2U^{j-1} - U^{j-2}
    \big\rangle.
  \end{align*}
  Summing over $j$ from $2$ to   $n$, identifying three telescoping sums, using
  the Young inequality \eqref{eq:Young} gives that 
  \begin{align*}
    & \|U^n\|^2
    + \|2U^n-U^{n-1}\|^2\\    
    & \leq 
    4 LT +
    \|U^{1}\|^2
    +
    \|2U^{1}-U^{0}\|^2
    + 
    4Lh \sum_{j=2}^n
    \|U^j\|^2
    -
      (8q-6)h\sum_{j=2}^n\triple g(U^j) \triple^2\\
    &\quad
    +   
    h
    \sum_{j=1}^{n-1}
    \triple
      g(U^j)
    \triple^2
    +
    h
    \sum_{j=0}^{n-2}
    \triple
      g(U^j)
    \triple^2
    + 
    h
    \triple
      g(U^{n-1})
    \triple^2 
    +
    \big\|
      2U^n-U^{n-1}
    \big\|^2\\
    &    
    \quad + h
    \triple
      g(U^{0})
    \triple^2
    +
    \big\|
      2U^1-U^0
    \big\|^2.
  \end{align*}
  This yields
  \begin{align*}
    & 
    \big(
     1 - 4Lh
    \big) 
    \|U^n\|^2
    + (8q - 6)h \triple g(U^n) \triple^2 \\    
    &\quad \leq 
     4LT + \|U^{1}\|^2
    +
    2\|2U^{1}-U^{0}\|^2 
    +   2 h
    \sum_{l=0}^1
    \triple
      g(U^l)
    \triple^2\\
    & \qquad
    + (8 - 8q ) h \sum_{j=2}^{n-1} \triple g(U^j) \triple^2
    + 4Lh \sum_{j=2}^{n-1}
    \|U^j\|^2.
  \end{align*}
  Since $q \in [1, \infty)$ it holds $8 - 8 q \le 0$ and $8q-6\geq 2$. By
  elementary bounds we get
  \begin{align*} 
    & \|U^n\|^2 + h \triple g(U^n) \triple^2
     \leq 
    \frac{4L}{1-4Lh} h \sum_{j=2}^{n-1}
    \|U^j\|^2 \\
    &\qquad
     +\frac1{1-4Lh}
     \Big( 4LT +
     \|U^{1}\|^2
    +
    2\|2U^{1}-U^{0}\|^2
    + 
    2h \sum_{l=0}^1
    \triple
      g(U^l)
    \triple^2 \Big).
  \end{align*}
  We conclude by a use of the discrete Gronwall Lemma \eqref{eq:Gronwall}.
  \qed
\end{proof}

\subsection{Stability of the BDF2-Maruyama scheme}
\label{subsec:stability}

Similar to the stability of the BEM scheme, for $h\in(0,1)$,
$V\in\mathcal{G}_h^2$ we define the local truncation error of $V$ by 
\begin{align}
  \label{eq:rho_h}
  \begin{split}
    \rho_h^\mathrm{BDF2}(V) 
    &:= 
    \max_{j\in  \{1,\dots,N_h\}} \frac{1}{h} \big\| \rho_{1,h}^j(V)
    \big\|^2
    +   
    \sum_{j = 2}^{N_h}
    \big\| \rho_{2,h}^j(V) 
    +
    \rho_{3,h}^j(V)   \big\|^2\\
    &\quad
    + \frac{1}{h} \sum_{j = 2}^{N_h}\big\| P_h^{j-1} \rho_{2,h}^j(V) 
    \big\|^2+ \frac{1}{h} \sum_{j = 2}^{N_h}\big\| P_h^{j-2}
    \rho_{3,h}^j(V) \big\|^2,
  \end{split}
\end{align}
where
\begin{align}
  \label{eq:rho_i}
  \begin{split}
    \rho_{1,h}^j(V) &:= h f(V^j) + g(V^{j-1}) \Delta_h W^j - V^j + V^{j-1},\\
    \rho_{2,h}^j(V) &:= \frac{1}{2} \big( h f(V^{j-1}) + g(V^{j-1}) \Delta_h
    W^j - V^j + V^{j-1} \big),\\
    \rho_{3,h}^j(V) &:= -\frac{1}{2} \big( h f(V^{j-1}) + g(V^{j-2}) \Delta_h 
    W^{j-1} - V^{j-1} + V^{j-2} \big), 
  \end{split}
\end{align}
for $j\in\{2,\dots,N_h\}$. Similar to \eqref{eq:h0} we define the maximal step
size 
\begin{align*}
  h_B=\frac1{2 (4L + 1) }.  
\end{align*}
Note that the proof of Theorem~\ref{thm:apriori} indicates that the assertion
of the following theorem actually holds true for all $h \in (0,
\frac1{4L})$ but the constants on the right hand side then depend on
$\frac{1}{1 - 4 h L}$.

\begin{theorem}
  \label{thm:stability}
  Let Assumption~\ref{as:fg} hold with $L\in(0,\infty)$, 
  $\eta \in (\tfrac12, \infty)$. For
  all $h\in(0,h_B]$, $U\in \mathcal{G}_h^2$ satisfying \eqref{eq:BDF2U},
  $V\in \mathcal{G}_h^2$, and all $n \in \{2,\ldots,N_h\}$ it holds that  
  \begin{align*}
    &\|U^n - V^n\|^2 + h \triple g(U^n) - g(V^n) \triple^2\\
    &\quad \le C \exp\Big(4 \max\Big\{  (1+2L)  ,  \frac{\eta}{2\eta-1} \Big\}
    t_n \Big)\\ 
    & \qquad \times\Big( \sum_{\ell=0}^1
    \big( \|U^\ell - V^\ell\|^2 + h
    \triple g(U^{\ell}) - g(V^{\ell}) \triple^2 \big)
    +\rho_h^\mathrm{BDF2}(V) \Big),
  \end{align*}
  where $C= \max\{30, 4\eta + 2, \tfrac{16 \eta}{2\eta-1}\}$.
\end{theorem}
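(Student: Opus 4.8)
The plan is to replicate the proof of Theorem~\ref{thm:BEMstability}, replacing the one-step identity \eqref{eq:emmrich1} by the three-step identity \eqref{eq:emmrich2} in the form supplied by Lemma~\ref{lemma:emmrich}. Fix $h\in(0,h_B]$ and $V\in\mathcal{G}_h^2$ and abbreviate $E^j:=U^j-V^j$, $\Delta f^j:=f(U^j)-f(V^j)$, $\Delta g^j:=g(U^j)-g(V^j)$. Applying Lemma~\ref{lemma:emmrich} and taking expectations — so that $\E[Z^j]=0$ — produces, for each $j\in\{2,\dots,N_h\}$, an identity whose left-hand side is the $G$-telescoping quantity $\|E^j\|^2-\|E^{j-1}\|^2+\|2E^j-E^{j-1}\|^2-\|2E^{j-1}-E^{j-2}\|^2+\|E^j-2E^{j-1}+E^{j-2}\|^2$ and whose right-hand side collects $4h\langle f(U^j),E^j\rangle$, the three stochastic inner products built from $g(U^{j-1})\Delta_h W^j$ and $g(U^{j-2})\Delta_h W^{j-1}$, and the term $-4\langle\tfrac32 V^j-2V^{j-1}+\tfrac12 V^{j-2},E^j\rangle$.

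Next I would split $f(U^j)=f(V^j)+\Delta f^j$ and $g(U^k)=g(V^k)+\Delta g^k$ throughout. The global monotonicity condition \eqref{eq:onesided} gives $4h\langle\Delta f^j,E^j\rangle\le 4hL\|E^j\|^2-4h\eta\triple\Delta g^j\triple^2$, supplying a negative $\triple\Delta g^j\triple^2$-reservoir. The purely $V$-dependent terms reorganise into the local residuals \eqref{eq:rho_i}: using the algebraic identity $hf(V^j)-\tfrac32 V^j+2V^{j-1}-\tfrac12 V^{j-2}=\rho_{1,h}^j+\rho_{2,h}^j+\rho_{3,h}^j-\tfrac32 g(V^{j-1})\Delta_h W^j+\tfrac12 g(V^{j-2})\Delta_h W^{j-1}$, together with the relations $\rho_{3,h}^j=-\tfrac12\rho_{1,h}^{j-1}$ and $\rho_{1,h}^j=2\rho_{2,h}^j+h(f(V^j)-f(V^{j-1}))$ that follow directly from \eqref{eq:rho_i}, the surplus stochastic terms cancel exactly against the $g(V)$-parts of the remaining Wiener inner products, leaving $4\langle\rho_{1,h}^j+\rho_{2,h}^j+\rho_{3,h}^j,E^j\rangle$. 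The $\Delta g$-parts keep the $G$-compatible structure of Lemma~\ref{lemma:emmrich}: the term $2\langle\Delta g^{j-1}\Delta_h W^j,2E^j-E^{j-1}\rangle$ at step $j$ is the negative of $-2\langle\Delta g^{j-2}\Delta_h W^{j-1},2E^{j-1}-E^{j-2}\rangle$ at step $j+1$, so these telescope and contribute only boundary terms at $j=1$ and $j=n$; the remaining $\Delta g$-term $2\langle\Delta g^{j-1}\Delta_h W^j-\Delta g^{j-2}\Delta_h W^{j-1},E^j-2E^{j-1}+E^{j-2}\rangle$ is estimated by Cauchy--Schwarz and \eqref{eq:Young}, using the increment orthogonality $\|\Delta g^{j-1}\Delta_h W^j-\Delta g^{j-2}\Delta_h W^{j-1}\|^2=h\triple\Delta g^{j-1}\triple^2+h\triple\Delta g^{j-2}\triple^2$ and absorbing $\|E^j-2E^{j-1}+E^{j-2}\|^2$ into the matching left-hand side term.

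For the residual inner products I would proceed as in the backward Euler case. Writing $4\langle\rho_{1,h}^j+\rho_{2,h}^j+\rho_{3,h}^j,E^j\rangle$ with the help of $\rho_{3,h}^j=-\tfrac12\rho_{1,h}^{j-1}$ and $P_h^{j-2}\rho_{3,h}^j=-\tfrac12 P_h^{j-2}\rho_{1,h}^{j-1}$, the $\mathcal{F}_{t_{j-1}}$- resp.\ $\mathcal{F}_{t_{j-2}}$-measurable conditional means $P_h^{j-1}\rho_{2,h}^j$ and $P_h^{j-2}\rho_{3,h}^j$ pair with $E^{j-1}$ resp.\ $E^{j-2}$, and \eqref{eq:Young} with $\nu\sim h$ turns these into the $\tfrac1h\|P_h^{j-1}\rho_{2,h}^j\|^2$ and $\tfrac1h\|P_h^{j-2}\rho_{3,h}^j\|^2$ pieces of \eqref{eq:rho_h} plus harmless $h\|E^{j-1}\|^2$, $h\|E^{j-2}\|^2$ terms; the centred remainders combine with $\rho_{2,h}^j+\rho_{3,h}^j$ to yield the $\|\rho_{2,h}^j+\rho_{3,h}^j\|^2$-sum; and the surviving ``first-order'' part, a $G$-weighted combination of the shifted backward Euler residuals $\rho_{1,h}^j,\rho_{1,h}^{j-1}$, telescopes on summation against the $G$-combination $2E^j-E^{j-1}$ to boundary terms involving only $\rho_{1,h}^n$ and $\rho_{1,h}^1$ — which is exactly why $\rho_{1,h}$ enters \eqref{eq:rho_h} as a maximum over $j\in\{1,\dots,N_h\}$ with weight $\tfrac1h$ rather than as a sum. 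Summing over $j=2,\dots,n$, the left-hand side telescopes to $\|E^n\|^2+\|2E^n-E^{n-1}\|^2$ minus the initial contributions at $\ell\in\{0,1\}$, plus the retained $\sum_j\|E^j-2E^{j-1}+E^{j-2}\|^2$. Choosing the Young parameter equal to $2\eta-1$, so that after adding the $-4h\eta\triple\Delta g^j\triple^2$ reservoir the coefficient of each $\triple\Delta g^k\triple^2$ with $2\le k\le n-1$ becomes nonpositive — this is precisely where $\eta>\tfrac12$ is used — with $\triple\Delta g^0\triple^2,\triple\Delta g^1\triple^2$ and the $j=1$ boundary terms merged into the initial data and $\triple\Delta g^n\triple^2$ kept on the left via $4\eta\ge1$, and using $h\le h_B$ to guarantee $1-4Lh>\tfrac12$ so that the $\|E^n\|^2$-contribution moves to the left, I obtain an inequality $a_n\le C\bigl(\text{initial data}+\rho_h^\mathrm{BDF2}(V)\bigr)+C'h\sum_{j<n}a_j$ for $a_n:=\|E^n\|^2+h\triple\Delta g^n\triple^2$, and the discrete Gronwall lemma \eqref{eq:Gronwall} then finishes the proof.

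I expect the main obstacle to be the bookkeeping in this last step: every $\triple\Delta g^k\triple^2$ is fed by the monotonicity bound at $j=k$ and by the two halves of the $\|E^j-2E^{j-1}+E^{j-2}\|^2$-term at $j=k+1$ and $j=k+2$ (and, for $k=n-1$, additionally by the surviving boundary term $2\langle\Delta g^{n-1}\Delta_h W^n,2E^n-E^{n-1}\rangle$), and one must check that the single choice of Young parameter $2\eta-1$ makes the total coefficient nonpositive uniformly in $k$ and $h$, while simultaneously verifying that all residual and boundary contributions assemble into exactly $C\,\rho_h^\mathrm{BDF2}(V)$ with $C=\max\{30,4\eta+2,\tfrac{16\eta}{2\eta-1}\}$; retaining the correct $G$-combinations (pairing against $2E^n-E^{n-1}$, not $E^n$ or $E^n-E^{n-1}$) in every Young step is what keeps $\eta>\tfrac12$ — rather than a larger threshold — sufficient.
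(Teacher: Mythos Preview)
Your plan is essentially the paper's proof: apply Lemma~\ref{lemma:emmrich}, take expectations, split into $\Delta f^j,\Delta g^j$ and the full residual $\varrho^j=\rho_{1,h}^j+\rho_{2,h}^j+\rho_{3,h}^j$, use monotonicity for the $\Delta f^j$-term, estimate the $\Delta g$-increment against $\|E^j-2E^{j-1}+E^{j-2}\|^2$ via weighted Young and orthogonality, telescope the two $\langle\Delta g\,\Delta W,2E-E\rangle$-terms, handle the residual via projectors and the relation $-2\rho_{3,h}^j=\rho_{1,h}^{j-1}$, sum and apply Gronwall. Two small execution details differ from the paper and are worth aligning: the paper pairs $\langle\rho_{1,h}^j,E^j\rangle-\langle\rho_{1,h}^{j-1},E^{j-1}\rangle$ (against $E^j$, not $2E^j-E^{j-1}$) for the $\rho_1$-telescope, and the Young weights that make the bookkeeping close are $\nu=2\eta$ for the $\Delta g$-increment and $\mu=\tfrac{4\eta}{2\eta-1}$ for $\langle\rho_{2,h}^j+\rho_{3,h}^j,E^j-2E^{j-1}+E^{j-2}\rangle$ (so that $\tfrac{2}{\mu}+\tfrac{1}{2\eta}-1=0$), rather than the single parameter $2\eta-1$ from the BEM proof.
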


\begin{proof}
  Fix arbitrary $h \in (0, h_B]$ and $V \in \mathcal{G}^2_h$. We reuse the
  notation from the proof of Theorem~\ref{thm:BEMstability}. In particular we
  set $E := U - V$ and we often suppress $h$ from the notation. The local
  residual of $V$ is given by  
  \begin{align*}
    \varrho^j := h f(V^j) + \frac{3}{2} g(V^{j-1}) \Delta W^j -
    \frac{1}{2} g(V^{j-2}) \Delta W^{j-1} - \frac{3}{2} V^{j} + 2 V^{j-1} -
    \frac{1}{2} V^{j-2},
  \end{align*}
  for $j\in\{2,\ldots,N_h\}$. From Lemma~\ref{lemma:emmrich} we get after
  taking expectations that
  \begin{align*}
    &  \|E^j\|^2 - \|E^{j-1}\|^2
    + \|2E^j-E^{j-1}\|^2 
    - \|2E^{j-1}-E^{j-2}\|^2
    + \|E^j-2E^{j-1}+E^{j-2}\|^2\\
    & = 4\big\langle \Delta f^j,E^j\big\rangle
    + 2 \big\langle \Delta g^{j-1}\Delta W^j-\Delta g^{j-2}\Delta W^{j-1},
    E^j-2E^{j-1}+E^{j-2} \big\rangle
    + 4 \langle \varrho^j,E^j \rangle\\
    &  \quad + 2 \big\langle \Delta g^{j-1} \Delta W^j,
    2E^j - E^{j-1}\big\rangle - 2 \big\langle\Delta g^{j-2} \Delta W^{j-1},
    2E^{j-1} - E^{j-2}\big\rangle.
  \end{align*}
  We observe that
  $P_h^{j-1}(2E^{j-1}-E^{j-2})=2E^{j-1}-E^{j-2}$ and $P_h^{j-2} E^{j-2}=E^{j-2}$.
  Further, we decompose the local residual of $V$ by  
  \begin{align}
    \label{eq:rho}
      \varrho^j = \rho_1^j + \rho_2^j + \rho_3^j,
  \end{align} 
  where $\rho_i^j := \rho_i^j(V)$, $i \in \{1,2,3\}$, are defined in
  \eqref{eq:rho_i}. Then, by taking the adjoints of the projectors and by
  applying the weighted Young inequality \eqref{eq:Young} with $\nu=\mu>0$ and
  with $\nu=h >0$, respectively, and by noting that
  $-2\rho_3^j=\rho_1^{j-1}$, we obtain 
  \begin{align*}
    &\big\langle \varrho^j, E^j \big\rangle  = \big\langle \rho_1^j, E^j
    \big\rangle + \big\langle \rho_2^j + \rho_3^j, E^j - 2 E^{j-1} +
    E^{j-2} \big\rangle + \big\langle \rho_2^j + \rho_3^j, 2 E^{j-1} - E^{j-2}
    \big\rangle\\
    &\quad \le \big\langle \rho_1^j, E^j \big\rangle
    + \frac{\mu}2 \big\| \rho_2^j + \rho_3^j \big\|^2
    + \frac{1}{2\mu} \big\| E^j - 2 E^{j-1} + E^{j-2} \big\|^2 \\
    &\qquad + \big\langle  \rho_2^j, P_h^{j-1}\big(2 E^{j-1} - E^{j-2}\big)
    \big\rangle
    + \big\langle \rho_3^j, 2 E^{j-1} \big\rangle
    - \big\langle \rho_3^j , P_h^{j-2} E^{j-2} \big\rangle\\
    &\quad \le \big\langle \rho_1^j, E^j \big\rangle 
    - \big\langle \rho_1^{j-1}, E^{j-1} \big\rangle
    + \frac{\mu}{2} \big\| \rho_2^j + \rho_3^j \big\|^2
    + \frac{1}{2\mu} \big\| E^j - 2 E^{j-1} + E^j \big\|^2\\
    &\qquad + \frac{1}{2h} \big\| P_h^{j-1} \rho_2^j 
    \big\|^2 + \frac{h}{2} \big\| 2 E^{j-1} - E^{j-2} \big\|^2
    + \frac{1}{2h} \big\| P_h^{j-2} \rho_3^j 
    \big\|^2 + \frac{h}{2} \big\| E^{j-2} \big\|^2.
  \end{align*}
  Together with the global monotonicity condition \eqref{eq:onesided} and the
  weighted Young inequality \eqref{eq:Young} with $\nu=2\eta$ this gives that 
  \begin{align*}
    &  \|E^j\|^2 - \|E^{j-1}\|^2
    + \|2E^j-E^{j-1}\|^2 
    - \|2E^{j-1}-E^{j-2}\|^2\\ 
    & \leq 4h L \big\| E^j \big\|^2
    - 4 h \eta \triple \Delta g^j \triple^2
    + 2\eta h \triple \Delta g^{j-1}\triple^2
    + 2\eta h \triple \Delta g^{j-2}\triple^2\\
    & \quad+ \Big( \frac2\mu + \frac1{2\eta} - 1\Big)
    \big\| E^j - 2E^{j-1} + E^{j-2} \big\|^2
    + 2h \big\| 2 E^{j-1} - E^{j-2} \big\|^2
    + 2h \big\| E^{j-2} \big\|^2\\
    & \quad
    + 2 \big\langle \Delta g^{j-1} \Delta W^j,
    2E^j - E^{j-1}\big\rangle - 2 \big\langle\Delta g^{j-2} \Delta W^{j-1},
    2E^{j-1} - E^{j-2}\big\rangle\\
    & \quad + 4\big\langle \rho_1^j, E^j \big\rangle 
    - 4\big\langle \rho_1^{j-1}, E^{j-1} \big\rangle
    + 2\mu \big\| \rho_2^j + \rho_3^j \big\|^2
    + \frac{2}{h} \big\| P_h^{j-1} \rho_2^j 
    \big\|^2 
    + \frac{2}{h} \big\| P_h^{j-2} \rho_3^j 
    \big\|^2.
  \end{align*}
  Setting $\mu=\tfrac{4\eta}{2\eta-1}>0$ gives that $\tfrac2\mu +
  \tfrac1{2\eta} - 1=0$. Then, summing over $j$ from $2$ to $n$ and identifying
  four telescoping sums yields 
  \begin{align*} 
    &  \|E^n\|^2
    + \|2E^n-E^{n-1}\|^2\\ 
    & \leq \|E^{1}\|^2 + \|2E^{1}-E^{0}\|^2 
    + 4h L \sum_{j=2}^{n}\big\| E^j \big\|^2
    - 4 \eta h \sum_{j=2}^n\triple \Delta g^j \triple^2
    + 2\eta h \sum_{j=1}^{n-1}\triple \Delta g^{j}\triple^2\\
    &\quad + 2\eta h \sum_{j=0}^{n-2}\triple \Delta g^{j}\triple^2
    + 2h \sum_{j=1}^{n-1}\big\| 2 E^{j} - E^{j-1} \big\|^2
    + 2h \sum_{j=0}^{n-2}\big\| E^{j} \big\|^2\\
    & \quad
    + 2 \big\langle \Delta g^{n-1} \Delta W^n,
    2E^n - E^{n-1}\big\rangle - 2 \big\langle\Delta g^{0} \Delta W^{1},
    2E^{1} - E^0\big\rangle+ 4\big\langle \rho_1^n, E^n \big\rangle\\ 
    &\quad - 4\big\langle \rho_1^{1}, E^{1} \big\rangle
    + \frac{8\eta}{2\eta-1} \sum_{j=2}^n\big\| \rho_2^j + \rho_3^j \big\|^2
    + \frac{2}{h} \sum_{j=2}^n\big\| P_h^{j-1} \rho_2^j 
    \big\|^2 
    + \frac{2}{h} \sum_{j=2}^n\big\| P_h^{j-2} \rho_3^j 
    \big\|^2.
  \end{align*}
  Next, we get from the weighted Young inequality \eqref{eq:Young} with $\nu =
  \frac{2}{h}$ that $4\big\langle \rho_1^{n}, E^{n} \big\rangle \le \frac{4}{h}
  \| \rho_1^n \|^2 + h \|E^{n}\|^2$. A further application of the weighted
  Young inequality \eqref{eq:Young} with $\nu=2\eta$ yields
  \begin{align*}
    & \big(1-(4L+1)h\big)\|E^n\|^2
    + \Big( 1 - \frac1{2\eta} \Big)\|2E^n-E^{n-1}\|^2\\ 
    & \leq (1+h)\|E^{1}\|^2 + (1+h)\|2E^{1}-E^{0}\|^2 
    + 4h L \sum_{j=2}^{n-1}\big\| E^j \big\|^2
    - 4 \eta h \sum_{j=2}^n\triple \Delta g^j \triple^2\\
    & \quad + 2\eta h \sum_{j=1}^{n-1}\triple \Delta g^{j}\triple^2
    + 2\eta h \sum_{j=0}^{n-2}\triple \Delta g^{j}\triple^2
    + 2h \sum_{j=1}^{n-1}\big\| 2 E^{j} - E^{j-1} \big\|^2
    + 2h \sum_{j=0}^{n-2}\big\| E^{j} \big\|^2\\
    & \quad
    + 2\eta h \triple \Delta g^{n-1} \triple^2 
    + h \triple \Delta g^{0} \triple^2 + \frac4h\| \rho_1^n \|^2 
    + \frac4h\| \rho_1^{1}\|^2
    + \frac{8\eta}{2\eta-1} \sum_{j=2}^n\big\| \rho_2^j
    + \rho_3^j \big\|^2\\
    & \quad + \frac{2}{h} \sum_{j=2}^n\big\| P_h^{j-1} \rho_2^j 
    \big\|^2 
    + \frac{2}{h} \sum_{j=2}^n\big\| P_h^{j-2} \rho_3^j 
    \big\|^2.
  \end{align*}
  At this point we notice that
  \begin{align*}
   &2\eta h \sum_{j=1}^{n-1}\triple \Delta g^{j}\triple^2
    + 2\eta h \sum_{j=0}^{n-2}\triple \Delta g^{j}\triple^2
    + 2\eta h \triple \Delta g^{n-1} \triple^2 + h \triple g^{0}
    \triple^2 - 4 \eta h \sum_{j=2}^n\triple \Delta g^j \triple^2\\
    &\quad
    \leq
    - 4 \eta h \triple \Delta g^n \triple^2
    + (2\eta + 1) h \triple \Delta g^0 \triple^2 
    + 4\eta h\triple \Delta g^1 \triple^2.
  \end{align*}
  In addition,
  since $1 - h (4L+1) > 1 - h_B (4L+1) = \frac{1}{2}$ and $h\le h_B<\tfrac12$ and
  $\eta>\tfrac12$ as well as $\| 2 E^1 + E^0 \|^2 \le 5 ( \| E^1\|^2  + \| E^0
  \|^2 )$  we obtain 
  after some elementary transformations the inequality
  \begin{align*}
    &\|E^n\|^2
      + 
      \frac{2\eta - 1}{\eta} \|2E^n-E^{n-1}\|^2
      +
      h \triple \Delta g^n \triple^2
    \leq
    \max\Big\{ 30, 4 \eta + 2 , \frac{16\eta}{2\eta-1} \Big\}\\
    &\quad\times\Big(\|E^0\|^2+\|E^{1}\|^2 
    + h \triple \Delta g^{0} \triple^2 
    +  h\triple \Delta g^1 \triple^2   + \rho_h^\mathrm{BDF2}(V)\Big)\\
    &\qquad + 4 \max\Big\{  (1+2L)  ,  \frac{\eta}{2\eta-1} \Big\} h
    \sum_{j=2}^{n-1}\Big(\big\| E^j \big\|^2 
    + \frac{2\eta - 1}{\eta} \big\| 2 E^{j} - E^{j-1} \big\|^2 \Big).
  \end{align*}
  The proof is completed by an application of \eqref{eq:Gronwall}. \qed
\end{proof}

\subsection{Consistency of the BDF2 scheme}
\label{subsec:consistency}

In this subsection we bound the local truncation error of the exact solution. 

\begin{theorem}\label{thm:consistency}
Let Assumption~\ref{as:fg} hold and let $X$ be the solution to \eqref{eq:SDE}.
Then there exists $C>0$ such that
\begin{align*}
  \rho_h^\mathrm{BDF2}(X|_{\tau_h})
  \leq
  C
  h
  ,\quad
  h\in(0,1). 
\end{align*}
\end{theorem}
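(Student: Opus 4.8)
The plan is to bound each of the four contributions to $\rho_h^{\mathrm{BDF2}}(X|_{\tau_h})$ in~\eqref{eq:rho_h} separately. Since the number of grid points satisfies $N_h \le T/h + 1 = \mathcal{O}(h^{-1})$, it suffices to establish the uniform estimates
\begin{align*}
  \big\| \rho_{1,h}^j(X|_{\tau_h}) \big\| &\le C h, \quad
  \big\| \rho_{2,h}^j(X|_{\tau_h}) + \rho_{3,h}^j(X|_{\tau_h}) \big\| \le C h,\\
  \big\| P_h^{j-1}\rho_{2,h}^j(X|_{\tau_h}) \big\| &\le C h^{\frac32}, \quad
  \big\| P_h^{j-2}\rho_{3,h}^j(X|_{\tau_h}) \big\| \le C h^{\frac32},
\end{align*}
for the admissible indices $j$, because these then yield $\frac1h\|\rho_{1,h}^j\|^2 \le Ch$, $\sum_{j}\|\rho_{2,h}^j+\rho_{3,h}^j\|^2 \le N_h\,Ch^2 = \mathcal{O}(h)$, and $\frac1h\sum_j\|P_h^{j-1}\rho_{2,h}^j\|^2 + \frac1h\sum_j\|P_h^{j-2}\rho_{3,h}^j\|^2 \le \frac1h\,N_h\,Ch^3 = \mathcal{O}(h)$.

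First I would insert the integral form of~\eqref{eq:SDE}, use $h\phi = \int_{t_{\ell-1}}^{t_\ell}\phi\diff{s}$ for $s$-independent $\phi$, and pull $\F_{t_{\ell-1}}$-measurable factors into the It\^o integral over $[t_{\ell-1},t_\ell]$, to rewrite the residuals as integrals of differences of the coefficient functions along the exact solution. For $V := X|_{\tau_h}$ a short computation gives
\begin{align*}
  \rho_{1,h}^j(V) &= \int_{t_{j-1}}^{t_j}\big(f(X(t_j)) - f(X(s))\big)\diff{s}
  + \int_{t_{j-1}}^{t_j}\big(g(X(t_{j-1})) - g(X(s))\big)\diff{W(s)},\\
  2\,\rho_{2,h}^j(V) &= \int_{t_{j-1}}^{t_j}\big(f(X(t_{j-1})) - f(X(s))\big)\diff{s}
  + \int_{t_{j-1}}^{t_j}\big(g(X(t_{j-1})) - g(X(s))\big)\diff{W(s)},
\end{align*}
while directly from the definitions~\eqref{eq:rho_i} one has $\rho_{3,h}^j(V) = -\tfrac12\,\rho_{1,h}^{j-1}(V)$, so a representation of $\rho_{3,h}^j(V)$ is obtained by shifting $j\mapsto j-1$ in the first line. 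In each of these three residuals the drift integrand $f(X(\cdot))-f(X(s))$ is evaluated at a base point that is an endpoint of the integration interval, and the diffusion integrand $g(X(\cdot))-g(X(s))$ at the left endpoint.

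Next I would apply the estimates~\eqref{eq:bound_f} and~\eqref{eq:bound_g} of \cite{beynisaakkruse2014}, which are available thanks to the moment bound~\eqref{eq:moment}: every drift-difference integral above is then bounded in $\LOR$ by $Ch^{\frac32}$ and every diffusion-difference integral by $Ch$, so the triangle inequality gives $\|\rho_{1,h}^j(V)\| \le Ch$, $\|\rho_{2,h}^j(V)\| \le Ch$, $\|\rho_{3,h}^j(V)\| \le Ch$, and hence $\|\rho_{2,h}^j(V)+\rho_{3,h}^j(V)\| \le Ch$. For the projected residuals I would use that $P_h^{j-1}=\E[\,\cdot\mid\F_{t_{j-1}}]$ annihilates $\int_{t_{j-1}}^{t_j}(g(X(t_{j-1}))-g(X(s)))\diff{W(s)}$, because its integrand is adapted and the integral starts at the conditioning time, and likewise $P_h^{j-2}$ annihilates the stochastic-integral part of $\rho_{1,h}^{j-1}(V)$; since conditional expectation is an $L^2$-contraction, only the drift parts remain, so $\|P_h^{j-1}\rho_{2,h}^j(V)\| \le \tfrac12 Ch^{\frac32}$ and $\|P_h^{j-2}\rho_{3,h}^j(V)\| = \tfrac12\|P_h^{j-2}\rho_{1,h}^{j-1}(V)\| \le \tfrac12 Ch^{\frac32}$; note that the drift integral of $\rho_{1,h}^{j-1}(V)$ over $[t_{j-2},t_{j-1}]$ does not vanish under $P_h^{j-2}$, since $f(X(t_{j-1}))$ is not $\F_{t_{j-2}}$-measurable, but it is still bounded by $Ch^{\frac32}$ via~\eqref{eq:bound_f}. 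Inserting the four bounds into~\eqref{eq:rho_h} and summing over the at most $T/h+1$ indices completes the argument.

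The main difficulty is essentially bookkeeping: one has to match each of the three residuals to the precise hypotheses of the auxiliary estimates~\eqref{eq:bound_f} and~\eqref{eq:bound_g} — keeping track of which grid node plays the role of the base point $\tau$ and which play the roles of the interval endpoints $\tau_1,\tau_2$ — and one must be careful that, although the diffusion parts of $\rho_{2,h}^j$ and $\rho_{3,h}^j$ are removed by the conditional expectations $P_h^{j-1}$ and $P_h^{j-2}$, the drift part of $\rho_{3,h}^j=-\tfrac12\rho_{1,h}^{j-1}$ is not removed by $P_h^{j-2}$ and must be estimated rather than discarded. Beyond this, no idea is required that is not already present in the consistency analysis of the backward Euler-Maruyama scheme in Subsection~\ref{subsec:BEMcons}.
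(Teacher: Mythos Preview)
Your proposal is correct and follows essentially the same approach as the paper's proof: express each residual by inserting the integral equation \eqref{eq:SDE}, then apply the auxiliary estimates \eqref{eq:bound_f} and \eqref{eq:bound_g} together with the $L^2$-contractivity of conditional expectation. The only cosmetic difference is that the paper handles $\rho_{2,h}^j+\rho_{3,h}^j$ by first cancelling the two $hf(V^{j-1})$ terms and then bounding the remaining full drift integrals $\int f(X(s))\diff{s}$ via the polynomial growth bound \eqref{eq:poly_growth}, whereas you bound $\rho_{2,h}^j$ and $\rho_{3,h}^j$ separately via \eqref{eq:bound_f} and use the triangle inequality; both routes give the same $\mathcal{O}(h)$ estimate.
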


\begin{proof}
In this proof we write $\rho_i^j:=\rho_{h,i}^j(X|_{\tau_h})$, $i\in\{1,2,3\}$,
$j\in\{2,\dots,N_h\}$, $h\in(0,1)$. From the definition of $\rho_h$ we see that
it suffices to show that 
\begin{equation}
  \label{eq:conv1}
  \begin{split}
    \max_{j\in\{2,\dots,N_h\}} 
    & \Big(
    \big\|
      \rho_1^j
    \big\|^2
    +
    \big\|
      \rho_2^j+\rho_3^j
    \big\|^2
    +
    \frac{1}{h} \big\|P_h^{j-1}\rho_2^j\big\|^2
    +
    \frac{1}{h} \big\|P_h^{j-2}\rho_3^j\big\|^2 
  \Big)
  \leq C h^2.
\end{split}
\end{equation}
It holds for $j\in\{2,\dots,N_h\}$ that
\begin{align*}
  \rho_1^j
&=
  \int_{t_{j-1}}^{t_j}
    \big(
      f(X(t_j))
      -
      f(X(s))
    \big)
  \diff{s}
  +
  \int_{t_{j-1}}^{t_j}
    \big(
      g(X(t_{j-1}))
      -
      g(X(s))
    \big)
  \diff{W(s)},\\
 \rho_2^j + \rho_3^j
&=
  \frac12
  \Big(
    \int_{t_{j-1}}^{t_j}
      \big(
        g(X(t_{j-1}))
        -
        g(X(s))
      \big)
    \diff{W(s)}
    -
   \int_{t_{j-1}}^{t_j}
     f(X(s))
   \diff{s}\\
& \qquad
    -
    \int_{t_{j-2}}^{t_{j-1}}
      \big(
        g(X(t_{j-2}))
        -
        g(X(s))
      \big)
    \diff{W(s)}
   +
   \int_{t_{j-2}}^{t_{j-1}}
     f(X(s))
   \diff{s}
  \Big),\\
  P_h^{j-1}
    \rho_2^j
&=
  \frac12
  \E
  \Big[
    \int_{t_{j-1}}^{t_j}
      \big(
        f(X(t_{j-1}))
        -
        f(X(s))
      \big)
    \diff{s}
    \big|
    \mathcal{F}_{t_{j-1}}
  \Big],\\
  P_h^{j-2}
  \rho_3^j
&=
  -
  \frac12
  \E
  \Big[
    \int_{t_{j-2}}^{t_{j-1}}
      \big(
        f(X(t_{j-2}))
        -
        f(X(s))
      \big)
    \diff{s}
    \big|
    \mathcal{F}_{t_{j-2}}
  \Big].
\end{align*}
As in the proof of Theorem~\ref{thm:BEMconsistency}, we note that the estimates
\eqref{eq:bound_f} and \eqref{eq:bound_g} are applicable to \eqref{eq:conv1}
due to the moment bound \eqref{eq:moment}. We further make use of the fact that
$\| \E [ V | \F_{t_{j-1}} ] \| \le \| V \|$ for every $V \in \LOR$ and the bound 
\begin{equation*}
  \begin{split}
    &\Big\| \int_{\tau_1}^{\tau_2} f(X(s)) \diff{s} \Big\| \leq
    C \Big( 1 + \sup_{t\in[0,T]} \big\| X(t) \big\|_{L^{2q}(\Omega;\R^m)}^q
    \Big) |\tau_2-\tau_1|,\quad t_1,t_2\in[0,T],
  \end{split}
\end{equation*}
which is obtained from \eqref{eq:poly_growth}. 
By these estimates and an application of the triangle inequality we directly
deduce \eqref{eq:conv1}. \qed
\end{proof}

\subsection{Mean-square convergence of the BDF2 scheme}
\label{subsec:convergence}

Here we consider the numerical approximations $(X_h^j)_{j=0}^{N_h}$,
$h\in(0,h_B]$, $h_B = \frac{1}{\max\{8L,2\}}$, which are uniquely determined by
the backward difference formula \eqref{eq:BDF2} and a family of initial values
$(X_h^0,X_h^1)_{h\in(0,h_B]}$. 
This family is assumed to satisfy the following assumption.

\begin{assumption}\label{as:initial}
The family of initial values $(X_h^0,X_h^1)_{h\in(0,h_B]}$ satisfies
\begin{align}
\label{eq:initial_cons2}
  X_h^\ell, f(X_h^\ell)
  \in
  L^2(\Omega,\mathcal{F}_{t_\ell},\P;\R^m)
  ,\quad
  g(X_h^\ell)
  \in
  L^2(\Omega,\mathcal{F}_{t_\ell},\P;\R^{m\times d}),
\end{align}
for all $h\in(0,h_B]$, $\ell \in\{0,1\}$, and is consistent of order $\tfrac12$
in the sense that  
\begin{align}\label{eq:initial_cons}
  \sum_{\ell=0}^1\|X(h \ell)-X_h^\ell\|^2
  +
  h\sum_{\ell=0}^1\triple g(X(h \ell)) - g(X_h^\ell) \triple^2
  =\mathcal{O}(h),
\end{align}
as $h\downarrow 0$, where $X$ is the solution to \eqref{eq:SDE}. 
\end{assumption}

We are now ready to state the main result of this section.

\begin{theorem}
  \label{thm:convergence}
  Let Assumptions~\ref{as:fg} and \ref{as:initial} hold, let $X$ be the
  solution to \eqref{eq:SDE} and $(X_h^j)_{j=0}^{N_h}$, $h \in (0,h_B]$, the
  solutions to \eqref{eq:BDF2} with initial values
  $(X_h^0,X_h^1)_{h\in(0,h_B]}$. Under these conditions the BDF2-Maruyama
  method is mean-square convergent of order $\tfrac12$, more precisely, there
  exists $C>0$ such that 
\begin{align*}
  \max_{n\in\{0,\dots,N_h\}}
  \|X_h^n-X(nh)\|
  \leq
  C\sqrt{h}
  ,\quad
  h\in(0,h_B].
\end{align*}
\end{theorem}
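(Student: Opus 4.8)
The plan is to mirror the proof of Theorem~\ref{thm:BEMconvergence} exactly, now invoking the BDF2 machinery instead of the Euler machinery. First I would fix $h \in (0, h_B]$ and apply the stability result Theorem~\ref{thm:stability} with the choice $U = (X_h^j)_{j=0}^{N_h}$, which lies in $\mathcal{G}_h^2$ by Theorem~\ref{thm:wellposedness} together with Assumption~\ref{as:initial} (condition \eqref{eq:initial_cons2} supplies the required integrability of the two initial values), and $V = X|_{\tau_h} = (X(t_j))_{j=0}^{N_h}$, which lies in $\mathcal{G}_h^2$ by the remark at the end of Subsection~\ref{subsec:setting}. This yields a constant $C>0$, independent of $h$, with
\begin{align*}
  \|X_h^n - X(nh)\|^2 \le C \exp\Big(4 \max\Big\{(1+2L), \tfrac{\eta}{2\eta-1}\Big\} t_n\Big) \Big( \sum_{\ell=0}^1 \big( \|X_h^\ell - X(h\ell)\|^2 + h \triple g(X_h^\ell) - g(X(h\ell)) \triple^2 \big) + \rho_h^\mathrm{BDF2}(X|_{\tau_h}) \Big),
\end{align*}
valid for all $n \in \{2,\dots,N_h\}$.

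Next I would bound the right-hand side uniformly in $n$. Since $t_n \le T$, the exponential factor is bounded by a constant depending only on $L$, $\eta$, $T$. The initial-value terms indexed by $\ell \in \{0,1\}$ are of order $\mathcal{O}(h)$ directly by the consistency hypothesis \eqref{eq:initial_cons} in Assumption~\ref{as:initial}. The local truncation error $\rho_h^\mathrm{BDF2}(X|_{\tau_h})$ is of order $\mathcal{O}(h)$ by the consistency Theorem~\ref{thm:consistency}. Combining these three facts gives $\max_{n \in \{2,\dots,N_h\}} \|X_h^n - X(nh)\|^2 \le C h$ for a new constant $C$, hence $\max_{n \in \{2,\dots,N_h\}} \|X_h^n - X(nh)\| \le C\sqrt{h}$.

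Finally I would handle the two omitted indices $n \in \{0,1\}$ separately: for these the bound $\|X_h^n - X(nh)\| \le C\sqrt{h}$ is immediate from \eqref{eq:initial_cons}, which asserts exactly that $\sum_{\ell=0}^1 \|X(h\ell) - X_h^\ell\|^2 = \mathcal{O}(h)$. Taking the maximum over all $n \in \{0,\dots,N_h\}$ and enlarging $C$ if necessary completes the argument.

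I do not expect a genuine obstacle here, since all the substantive work — the well-posedness in $\mathcal{G}_h^2$, the $G$-stability-based stability estimate driven by identity \eqref{eq:emmrich2}, and the consistency bound relying on the $L^{4q-2}$ moment bound \eqref{eq:moment} and the integral estimates \eqref{eq:bound_f}, \eqref{eq:bound_g} — has already been carried out in the preceding subsections. The only point requiring a little care is bookkeeping: making sure the stability theorem is quoted for $n \ge 2$ while the initial segment $n \in \{0,1\}$ is covered directly by the initial-value assumption, and checking that the constant produced is genuinely independent of $h$ (which it is, because $h_B = \frac{1}{2(4L+1)}$ keeps $1 - (4L+1)h \ge \frac12$ throughout, exactly as used in the proof of Theorem~\ref{thm:stability}).
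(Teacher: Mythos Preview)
Your proposal is correct and follows essentially the same approach as the paper: apply the stability Theorem~\ref{thm:stability} with $U=(X_h^j)_{j=0}^{N_h}$ and $V=X|_{\tau_h}$, then bound the initial-value terms by Assumption~\ref{as:initial} and the truncation term by Theorem~\ref{thm:consistency}. Your explicit treatment of the indices $n\in\{0,1\}$ and of the $h$-independence of the constant is slightly more detailed than the paper's version, but the argument is the same.
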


\begin{proof}
For $h\in(0,h_B]$, we apply Theorem~\ref{thm:stability} with
$U=(X_h^j)_{j=0}^{N_h}\in\mathcal{G}_h^2$ and
$V=X|_{\tau_h}=(X(t_j))_{j=0}^{N_h}\in\mathcal{G}_h^2$ and get that there is a
constant $C>0$, independent of $h$, such that 
\begin{align*}
  &\|X_h^n-X(nh)\|^2\\
 &\quad
  \le 
  C
  \Bigg(
  \sum_{\ell=0}^1\|X_h^\ell-X(h\ell)\|^2
  +h \sum_{\ell=0}^1
  \triple g(X_h^{\ell}) - g(X(h\ell)) \triple^2
  +\rho_h^\mathrm{BDF2}(X|_{\tau_h}) \Bigg).
\end{align*}
The sums are of order $\mathcal{O}(h)$ by \eqref{eq:initial_cons}. In addition,
the consistency term $\rho_h^\mathrm{BDF2}(X|_{\tau_h})$ is also of order
$\mathcal{O}(h)$ by Theorem~\ref{thm:consistency}. \qed
\end{proof}

\subsection{Admissible initial values for the BDF2-Maruyama scheme}
\label{subsec:BDF2initial}

Assumption~\ref{as:initial} provides an abstract criterion for an admissible
choice of the initial values for the BDF2-Maruyama method such that the
mean-square convergence of order $\frac12$ is ensured.
Here we consider a concrete scheme for the computation of the
second initial value, namely the computation of $X_h^1$ by one step of the
backward Euler-Maruyama method.

\begin{theorem}\label{thm:initial}
  Let Assumption~\ref{as:fg} be fulfilled. Consider a family 
  $(X_h^0)_{h\in(0,h_B]}$ of approximate initial values
  satisfying Assumption~\ref{as:BEMinitial}.
  If $(X_h^1)_{h\in(0,h_B]}$ is determined by one step of the backward
  Euler-Maruyama method, i.e, if for all 
  $h\in(0,h_B]$ the random variable $X_h^1$ solves the equation 
  \begin{align*}
    X_h^1=X_h^0+hf(X_h^1)+g(X_h^0)\Delta_h W^1,
  \end{align*}
  then $(X_h^0,X_h^1)_{h\in(0,h_B]}$ satisfy the conditions of
  Assumption~\ref{as:initial}.
\end{theorem}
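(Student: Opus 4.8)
The plan is to verify the two parts of Assumption~\ref{as:initial} separately for the pair $(X_h^0, X_h^1)_{h\in(0,h_B]}$, where $X_h^1$ is produced by one backward Euler-Maruyama step from $X_h^0$. The integrability and measurability conditions \eqref{eq:initial_cons2} for $\ell=0$ hold by Assumption~\ref{as:BEMinitial}; for $\ell=1$ they follow from Theorem~\ref{thm:wellposedness} applied with $k=1$ to the single BEM step (or, more directly, from the argument in the proof of that theorem: $X_h^1 = F_h^{-1}(R_h^1)$ with $R_h^1 = X_h^0 + g(X_h^0)\Delta_h W^1 \in L^2(\Omega,\F_{t_1},\P;\R^m)$, and then the linear growth of $F_h^{-1}$, the bound \eqref{eq:gFLip}, and the identity $f(X_h^1) = \frac{1}{\beta_k h}(F_h^{-1}(R_h^1) - R_h^1)$ give $X_h^1, f(X_h^1) \in L^2(\Omega,\F_{t_1},\P;\R^m)$ and $g(X_h^1) \in L^2(\Omega,\F_{t_1},\P;\R^{m\times d})$). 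Here $h_B \le h_E$, so the BEM step is well-posed.

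For the consistency estimate \eqref{eq:initial_cons}, the $\ell=0$ terms are $\mathcal{O}(h)$ immediately by \eqref{eq:BEMinitial_cons} in Assumption~\ref{as:BEMinitial}. The work is to show
\[
  \|X(h) - X_h^1\|^2 + h \triple g(X(h)) - g(X_h^1) \triple^2 = \mathcal{O}(h).
\]
The natural route is to invoke the BEM stability Theorem~\ref{thm:BEMstability} on the one-point grid: take $U = (X_h^0, X_h^1)$, the genuine BEM iterates, and $V = (X(0), X(h)) = X|_{\tau_h}$ restricted to $\{0,1\}$. Since $X|_{\tau_h} \in \mathcal{G}_h^2$, Theorem~\ref{thm:BEMstability} with $n=1$ yields
\[
  \|X_h^1 - X(h)\|^2 + h\triple g(X_h^1) - g(X(h))\triple^2
  \le C e^{2(1+2L)h}\Big( \|X_h^0 - X(0)\|^2 + h\triple g(X_h^0) - g(X(0))\triple^2 + \rho_h^{\mathrm{BEM}}(X|_{\tau_h})\Big).
\]
The first two terms on the right are $\mathcal{O}(h)$ by Assumption~\ref{as:BEMinitial}, and $\rho_h^{\mathrm{BEM}}(X|_{\tau_h}) = \mathcal{O}(h)$ by Theorem~\ref{thm:BEMconsistency}; the exponential prefactor is bounded uniformly in $h\in(0,h_B]$. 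Hence the $\ell=1$ contribution to \eqref{eq:initial_cons} is $\mathcal{O}(h)$, completing the proof.

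The one point requiring care is that $\rho_h^{\mathrm{BEM}}$ and Theorem~\ref{thm:BEMconsistency} are formulated with sums running over all of $\{1,\dots,N_h\}$, whereas here only the single index $j=1$ enters; but the per-index bound \eqref{eq:BEMconv1} in the proof of Theorem~\ref{thm:BEMconsistency} is exactly the uniform bound $\|\varrho_h^1\|^2 + \frac1h\|P_h^0 \varrho_h^1\|^2 \le Ch^2$, which is all that is needed, and in fact one can just quote \eqref{eq:BEMconv1} directly. So the main (and only mild) obstacle is bookkeeping: making sure the stability theorem is applied to a grid function in $\mathcal{G}_h^2$ on the appropriate index range and that the constants remain $h$-independent on $(0,h_B]$, which is immediate since $h_B \le h_E$.
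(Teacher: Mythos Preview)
Your proposal is correct and follows essentially the same route as the paper: establish \eqref{eq:initial_cons2} via membership in $\mathcal{G}_h^2$ (the paper just cites that the BEM solution lies in $\mathcal{G}_h^2$, which is Theorem~\ref{thm:wellposedness}), and obtain \eqref{eq:initial_cons} by applying the BEM stability Theorem~\ref{thm:BEMstability} with $n=1$ and $V=X|_{\tau_h}$, then invoking Assumption~\ref{as:BEMinitial} and Theorem~\ref{thm:BEMconsistency}. Your extra bookkeeping remarks (that $h_B\le h_E$ and that only the $j=1$ residual term actually enters) are valid refinements but do not change the argument.
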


\begin{proof}
  The fact that the solution of \eqref{eq:BEM} belongs to $\mathcal{G}_h^2$
  proves \eqref{eq:initial_cons2} of Assumption~\ref{as:initial}. By
  Theorem~\ref{thm:BEMstability} it holds that 
  \begin{align*}
    & \sum_{\ell=0}^1\|X(t_\ell)-X_h^\ell\|^2
    + h \sum_{\ell=0}^1 \triple g(X(t_\ell)) - g(X_h^\ell) \triple^2\\
    &\quad \leq \big( 1+ Ce^{2(1+2L)h} \big) 
    \Big(
      \|X(0)-X_h^0\|^2
      +
      h \triple g(X(0)) - g(X_h^0) \triple^2
      +
      \rho_h^\mathrm{BEM}(X|_{\tau_h})
    \Big).
  \end{align*}
  From Theorem~\ref{thm:BEMconsistency} and Assumption~\ref{as:BEMinitial} the
  right hand side is of order $\mathcal{O}(h)$ as $h\downarrow0$, and this
  proves \eqref{eq:initial_cons}. \qed
\end{proof}

\begin{remark}
  Consider the same assumption as in Theorem~\ref{thm:initial}. From the
  H\"older continuity of the solution $X$ of \eqref{eq:SDE} and
  Assumption~\ref{as:BEMinitial} it holds that  
  \begin{align*}
    \|X(h)-X_h^0\|
    \leq
    \|X(h)-X(0)\| + \|X(0)-X_h^0\|
    \leq 
    C\sqrt{h}.
  \end{align*}
  Therefore, also the choice $X_h^1:=X_h^0$ satisfies the conditions of
  Assumption~\ref{as:initial} and, therefore, is feasible in
  terms of the asymptotic rate of convergence. However, numerical simulations
  similar to those in Section~\ref{sec:numerics} indicate that, although the
  experimental convergence rates behave as expected, this simple choice of the
  second initial value leads to a significantly larger error compared to
  $X_h^1$ being generated by one step of the backward Euler-Maruyama method. 
\end{remark}

\section{Numerical experiments}
\label{sec:numerics}

In this section we perform some numerical experiments which illustrate the
theoretical results from the previous sections. In
Subsection~\ref{subsec:3over2} we consider the $\tfrac32$-volatility model from
finance, which is a one dimensional equation. In Subsection~\ref{subsec:2D} we
do computations for a two dimensional dynamics which mimics the form and
properties of the discretization of a stochastic partial differential equation,
like the Allen-Cahn equation. 

\subsection{An example in one dimension: the $\tfrac32$-volatility model}
\label{subsec:3over2}

Hereby we consider the
stochastic differential equation 
\begin{align}
  \label{eq:3_2}
  \begin{split}
    \diff{X}(t) &= \big[ X(t) - \lambda X(t) | X(t) | \big] \diff{t} + \sigma
    |X(t)|^{\frac{3}{2}} \diff{W(t)}, \quad t \in [0,T],\\
    X(0) &= X_0, 
  \end{split}
\end{align}
with $m = d = 1$, $\lambda > 0$, $\sigma \in \R$, and $X_0 \in \R$. For
positive initial conditions this equation is also known as the
$\frac{3}{2}$-volatility model \cite{goard2013,henryL2007}. From the quadratic
growth of the drift it holds that $q=2$ in Assumption~\ref{as:fg} and, as the
reader can check, the coercivity condition~\eqref{eq:bound_fg} is valid for
$L=1$ provided that $\lambda\geq\tfrac{4q-3}{2} \sigma^2 =\tfrac52 \sigma^2$.
>From the calculation in \cite[Appendix]{sabanis2013b} it holds for all
$x_1,x_2\in\R$ that 
\begin{align*}
  &\big( f(x_1) - f(x_2) , x_1 - x_2 \big)
  +
  \eta\big| g(x_1) - g(x_2) \big|^2\\
  &\quad
  \leq
  |x_1-x_2|^2+(2\sigma^2\eta-\lambda)
  \big( |x_1| + |x_2| \big)\big(|x_1|-|x_2|\big)^2.
\end{align*}
The global monotonicity condition \eqref{eq:onesided} is therefore satisfied
with $L=1$ and $\eta\leq\tfrac{\lambda}{2\sigma^2}$. As we require
$\eta>\tfrac12$ this imposes the condition $\lambda>\sigma^2$ and altogether we
have that Assumption~\ref{as:fg} is valid for $L=1$, $q=2$, 
$\eta\in(\tfrac12,\tfrac{\lambda}{2\sigma^2})$ provided that
$
\lambda \geq \tfrac52 \sigma^2
$.

In our experiments we approximate the strong error of convergence
for the explicit Euler-Maruyama method (EulM) (see \cite{kloeden1999}), the
backward Euler-Maruyama method (BEM), and the BDF2-Maruyama method (BDF2),
respectively. More precisely, we approximate the root mean square error by a
Monte Carlo simulation based on $M = 10^6$ samples, that is
\begin{align*}
  \mathrm{error}(h) := \max_{0 \le n \le N_h} \Big( \frac{1}{M} \sum_{m = 1}^M
  \big| X^{(m)}(h n) - X_h^{n,(m)} \big|^2 \Big)^{\frac{1}{2}} \approx
  \max_{0 \le n \le N_h} \big\| X(n h) - X_h^n \big\|,  
\end{align*}
where for every $m \in \{1,\ldots,M\}$ the processes $X^{(m)}$ and
$(X_h^{n,(m)})_{n = 0}^{N_h}$ denote independently generated copies of $X$ and
$X_h$, respectively. Here we set $h := \frac{T}{N_h}$ and for the number of
steps $N_h$ we use the values $\{ 25 \cdot 2^{k}\; : \; k =0,\ldots,7\}$,
i.e., $N_h$ ranges from $25$ to $3200$. Since there is no explicit
expression of the exact solution to \eqref{eq:3_2} available, we replace
$X^{(m)}$ in the error computation by a numerical reference solution generated
by the BDF2-Maruyama method with $N_{\mathrm{ref}} = 25 \cdot
2^{12}$ steps.  

As already discussed in Section~\ref{sec:wellposedness}, in every time step of
the implicit schemes we have to solve a nonlinear equation of the form
\begin{align}
  \label{eq:quadeq}
  X_h^{j} - h \beta f(X_h^j) &= R_h^j, 
\end{align}
for $X_h^j$. Here, we have
$f(x) = x - \lambda x|x|$ and $g(x) = \sigma
|x|^{\frac{3}{2}}$ for $x \in \R$ and 
\begin{align}\label{eq:Rh_BEM}
  \beta = 1,& \quad R_h^j = X^{j-1}_h + g(X^{j-1}) \Delta_h W^j, 
\end{align}
for the backward Euler-Maruyama method, and
\begin{align}\label{eq:Rh_BDF2}
  \beta = \frac{2}{3}, & \quad R_h^j = \frac{4}{3} X_h^{j-1} - \frac{1}{3}
  X_h^{j-2} + g(X_h^{j-1}) \Delta_h W^j - \frac{1}{3} g(X_h^{j-2}) \Delta_h
  W^{j-1},
\end{align}
for the BDF2-Maruyama method.
For the $\frac{3}{2}$-volatility model it turns out that \eqref{eq:quadeq} is a
simple quadratic equation, which can be solved explicitly. 
Depending on the sign of the right hand side $X_h^j$ is given by 
\begin{align*}
  X_h^j =
  \begin{cases}
    - \frac{1 - \beta h}{2 \beta h \lambda}
    + \Big( \big(\frac{1 - \beta h}{2 \beta h \lambda}\big)^2 
    + \frac{R_h^j}{ \beta h \lambda} \Big)^{\frac{1}{2}},
    & \text{if } R_h^j \ge 0,\\
    \frac{1 - \beta h}{2 \beta h \lambda} 
    - \Big( \big(\frac{1 - \beta h}{2 \beta h \lambda}\big)^2 
    -\frac{R_h^j}{\beta h \lambda} \Big)^{\frac{1}{2}},
    & \text{if } R_h^j < 0.
  \end{cases}
\end{align*}
As the first initial value we set $X_h^0 \equiv X_0$ for both schemes. In
addition, we generate the second initial value for the BDF2-Maruyama method
by one step of the BEM method as proposed in Section~\ref{subsec:BDF2initial}.
Note that the computation of one step of the BDF2-Maruyama method is, up to
some additional operations needed for the evaluation of $R_h^j$, as costly as 
one step of the BEM method.  

\small
\begin{table}[t]
  \caption{Non-stiff case without noise: $\lambda = 4$, $\sigma = 0$.}
  \label{tab:32vol_exp3}
  \begin{tabular}{p{1.1cm}p{1.5cm}p{1.2cm}p{1.5cm}p{1.2cm}p{1.5cm}p{0.9cm}}
     & EulM &      & BEM &      & BDF2 &  \\ 
     \hline\noalign{\smallskip}
     $N_h$     & error & EOC     & error & EOC     & error & EOC \\ 
     \noalign{\smallskip}\hline\noalign{\smallskip}
      25      & 0.024635 &      & 0.020186 &      & 0.010594 & \\ 
      50  & 0.011619 & 1.08  & 0.010528 & 0.94  & 0.003739 & 1.50 \\ 
      100  & 0.005659 & 1.04  & 0.005388 & 0.97  & 0.001134 & 1.72 \\ 
      200  & 0.002793 & 1.02  & 0.002726 & 0.98  & 0.000325 & 1.80 \\ 
      400  & 0.001388 & 1.01  & 0.001371 & 0.99  & 0.000088 & 1.89 \\ 
      800  & 0.000692 & 1.00  & 0.000688 & 1.00  & 0.000023 & 1.93 \\ 
      1600  & 0.000345 & 1.00  & 0.000344 & 1.00  & 0.000006 & 1.96 \\ 
      3200  & 0.000173 & 1.00  & 0.000172 & 1.00  & 0.000002 & 1.98 \\ 
      \noalign{\smallskip}\hline
  \end{tabular}
\end{table}
\normalsize

\small
\begin{table}[t]
  \caption{Non-stiff case with smaller noise intensity: $\lambda = 4$, $\sigma =
  \frac{1}{3}$.} 
  \label{tab:32vol_exp1}
  \begin{tabular}{p{1.1cm}p{1.5cm}p{1.2cm}p{1.5cm}p{1.2cm}p{1.5cm}p{0.9cm}}
     & EulM &      & BEM &      & BDF2 &  \\ 
     \hline\noalign{\smallskip}
     $N_h$     & error & EOC     & error & EOC     & error & EOC \\ 
     \noalign{\smallskip}\hline\noalign{\smallskip}
      25      & 0.026853 &      & 0.020812 &      & 0.011949 & \\ 
      50  & 0.012788 & 1.07  & 0.011020 & 0.92  & 0.004961 & 1.27 \\ 
      100  & 0.006398 & 1.00  & 0.005816 & 0.92  & 0.002662 & 0.90 \\ 
      200  & 0.003334 & 0.94  & 0.003115 & 0.90  & 0.001695 & 0.65 \\ 
      400  & 0.001818 & 0.87  & 0.001733 & 0.85  & 0.001140 & 0.57 \\ 
      800  & 0.001051 & 0.79  & 0.001016 & 0.77  & 0.000785 & 0.54 \\ 
      1600  & 0.000647 & 0.70  & 0.000632 & 0.69  & 0.000546 & 0.52 \\ 
      3200  & 0.000417 & 0.63  & 0.000411 & 0.62  & 0.000380 & 0.52 \\ 
      \noalign{\smallskip}\hline
  \end{tabular}
\end{table}
\normalsize

\small
\begin{table}[t]
  \caption{Non-stiff case with higher noise intensity: $\lambda = 4$, $\sigma =
  1$.}
  \label{tab:32vol_exp2}
  \begin{tabular}{p{1.1cm}p{1.5cm}p{1.2cm}p{1.5cm}p{1.2cm}p{1.5cm}p{0.9cm}}
     & EulM &      & BEM &      & BDF2 &  \\ 
     \hline\noalign{\smallskip}
     $N_h$     & error & EOC     & error & EOC     & error & EOC \\ 
     \noalign{\smallskip}\hline\noalign{\smallskip}
      25      & 0.069083 &      & 0.048076 &      & 0.048450 & \\ 
      50  & 0.039639 & 0.80  & 0.032517 & 0.56  & 0.032775 & 0.56 \\ 
      100  & 0.024776 & 0.68  & 0.022244 & 0.55  & 0.022382 & 0.55 \\ 
      200  & 0.016454 & 0.59  & 0.015563 & 0.52  & 0.015617 & 0.52 \\ 
      400  & 0.011187 & 0.56  & 0.010873 & 0.52  & 0.010894 & 0.52 \\ 
      800  & 0.007770 & 0.53  & 0.007656 & 0.51  & 0.007665 & 0.51 \\ 
      1600  & 0.005412 & 0.52  & 0.005375 & 0.51  & 0.005377 & 0.51 \\ 
      3200  & 0.003790 & 0.51  & 0.003778 & 0.51  & 0.003778 & 0.51 \\ 
      \noalign{\smallskip}\hline
  \end{tabular}
\end{table}
\normalsize

In all simulations the initial condition of equation \eqref{eq:3_2} is set to
be $X_0 = 1$, while the length of the time interval equals $T= 1$.
Regarding the choice of the noise intensity $\sigma$ let us note that in the
deterministic situation with $\sigma = 0$ it is well-known that the BDF2 method
converges with order $2$ to the exact solution. In the stochastic case with
$\sigma > 0$, however, the order of convergence reduces asymptotically to
$\frac{1}{2}$ due to the presence of the noise. Hence, the BDF2-Maruyama
method offers apparently no advantage over the backward Euler-Maruyama method.
But, as it has already been observed in \cite{buckwar2006}, one still benefits
from the higher deterministic order of convergence if the intensity of the
noise is small compared to the step size of the numerical scheme. To illustrate
this effect we use three different noise levels in our simulations: the
deterministic case $\sigma = 0$, a small noise intensity with $\sigma =
\frac{1}{3}$, and a higher intensity with $\sigma = 1$.

Moreover, the equation \eqref{eq:3_2} behaves stiffer in
the sense of numerical analysis if the value for
$\lambda$ is increased. Since implicit numerical schemes like the backward
Euler method and the BDF2 method are known to behave more stable in this
situation than explicit schemes, we will perform our simulations with two
different values for $\lambda$: The non-stiff case with $\lambda = 4$ and the
stiff case with $\lambda = 25$. Note that the condition $
\lambda \geq \tfrac52 \sigma^2$ is satisfied for all combinations of $\lambda$
and $\sigma$.

Further, to better illustrate the effect of the parameter $\lambda$ on
explicit schemes explains why we also included the explicit Euler-Maruyama
method in our simulations. Although this scheme is actually known to be
divergent for SDEs involving superlinearly growing drift- and diffusion
coefficient functions, see \cite{hutzenthaler2011}, it nonetheless often yields
reliable numerical results. But let us stress that the observed  
experimental convergence of the explicit Euler-Maruyama scheme is purely
empirical and does not indicate its convergence in the sense of
\eqref{eq:msconv}.

The first set of numerical results are displayed in Tables~\ref{tab:32vol_exp3}
to \ref{tab:32vol_exp2}, which are concerned with the non-stiff case $\lambda =
4$. In Table~\ref{tab:32vol_exp3} we see the errors computed  in the
deterministic case $\sigma=0$. As expected the explicit Euler scheme and the
backward Euler method perform equally well, while the experimental errors of
the BDF2 method are much smaller. This is also indicated by the
\emph{experimental order of convergence} (EOC) which is defined for successive
step sizes and errors by 
\begin{align*}
  \mathrm{EOC} = \frac{\log( \mathrm{error}(h_i) ) -
  \log(\mathrm{error}(h_{i-1}))}{ \log(h_i) - \log(h_{i-1})}.
\end{align*}
As expected the numerical results are in line with the theoretical
orders.

In Table~\ref{tab:32vol_exp1} the noise intensity is increased to $\sigma =
\frac{1}{3}$. Here we see that for larger step sizes, that is $N_h \in \{25,
50\}$, the erros are only slightly larger than in the deterministic case. In
fact, for the two Euler methods the discretization error of the drift part
seems to dominate the total error for almost all step sizes as the errors
mostly coincide with those in Table~\ref{tab:32vol_exp3}. On the other hand, the
BDF2-Maruyama method performs significantly better for larger and medium sized
step sizes. Only on the two finest refinement levels $N_h \in \{1600, 3200\}$
the estimated errors of all three schemes are of the same magnitude.
This picture changes drastically in Table~\ref{tab:32vol_exp2}, which shows the
result of the same experiment but with $\sigma = 1$. Here the errors of all
schemes agree for almost all step sizes and the BDF2-Maruyama method is no
longer superior.

\small
\begin{table}[t]
  \caption{Stiff case without noise: $\lambda = 25$, $\sigma = 0$.} 
  \label{tab:32vol_exp6}
  \begin{tabular}{p{1.1cm}p{1.5cm}p{1.2cm}p{1.5cm}p{1.2cm}p{1.5cm}p{0.9cm}}
     & EulM &      & BEM &      & BDF2 &  \\ 
     \hline\noalign{\smallskip}
     $N_h$     & error & EOC     & error & EOC     & error & EOC \\ 
     \noalign{\smallskip}\hline\noalign{\smallskip}
      25      & 0.475184 &      & 0.114050 &      & 0.114050 & \\ 
      50  & 0.157860 & 1.59  & 0.067366 & 0.76  & 0.062722 & 0.86 \\ 
      100  & 0.054660 & 1.53  & 0.038126 & 0.82  & 0.027090 & 1.21 \\ 
      200  & 0.024244 & 1.17  & 0.020389 & 0.90  & 0.010049 & 1.43 \\ 
      400  & 0.011541 & 1.07  & 0.010594 & 0.94  & 0.003426 & 1.55 \\ 
      800  & 0.005640 & 1.03  & 0.005404 & 0.97  & 0.001017 & 1.75 \\ 
      1600  & 0.002789 & 1.02  & 0.002730 & 0.98  & 0.000289 & 1.81 \\ 
      3200  & 0.001387 & 1.01  & 0.001372 & 0.99  & 0.000078 & 1.90 \\ 
      \noalign{\smallskip}\hline
  \end{tabular}
\end{table}
\normalsize

\small
\begin{table}[t]
  \caption{Stiff case with smaller noise intensity: $\lambda = 25$, $\sigma =
  \frac{1}{3}$.}
  \label{tab:32vol_exp5}
  \begin{tabular}{p{1.1cm}p{1.5cm}p{1.2cm}p{1.5cm}p{1.2cm}p{1.5cm}p{0.9cm}}
     & EulM &      & BEM &      & BDF2 &  \\ 
     \hline\noalign{\smallskip}
     $N_h$     & error & EOC     & error & EOC     & error & EOC \\ 
     \noalign{\smallskip}\hline\noalign{\smallskip}
      25   & 0.477006 &       & 0.114345 &       & 0.114345 & \\ 
      50   & 0.159402 & 1.58  & 0.067506 & 0.76  & 0.062802 & 0.86 \\ 
      100  & 0.055190 & 1.53  & 0.038191 & 0.82  & 0.027099 & 1.21 \\ 
      200  & 0.024422 & 1.18  & 0.020420 & 0.90  & 0.010129 & 1.42 \\ 
      400  & 0.011622 & 1.07  & 0.010614 & 0.94  & 0.003471 & 1.55 \\ 
      800  & 0.005679 & 1.03  & 0.005416 & 0.97  & 0.001074 & 1.69 \\ 
      1600 & 0.002811 & 1.01  & 0.002739 & 0.98  & 0.000351 & 1.61 \\ 
      3200 & 0.001401 & 1.00  & 0.001379 & 0.99  & 0.000182 & 0.95 \\ 
      \noalign{\smallskip}\hline
  \end{tabular}
\end{table}
\normalsize

\small
\begin{table}
  \caption{Stiff case with higher noise intensity: $\lambda = 25$, $\sigma =
  1$.}
  \label{tab:32vol_exp4}
  \begin{tabular}{p{1.1cm}p{1.5cm}p{1.2cm}p{1.5cm}p{1.2cm}p{1.5cm}p{0.9cm}}
     & EulM &      & BEM &      & BDF2 &  \\ 
     \hline\noalign{\smallskip}
     $N_h$     & error & EOC     & error & EOC     & error & EOC \\ 
     \noalign{\smallskip}\hline\noalign{\smallskip}
      25      & 0.492313 &      & 0.117432 &      & 0.117432 & \\ 
      50  & 0.171504 & 1.52  & 0.069266 & 0.76  & 0.064308 & 0.87 \\ 
      100  & 0.060257 & 1.51  & 0.039290 & 0.82  & 0.028057 & 1.20 \\ 
      200  & 0.026558 & 1.18  & 0.021182 & 0.89  & 0.011774 & 1.25 \\ 
      400  & 0.012821 & 1.05  & 0.011208 & 0.92  & 0.005207 & 1.18 \\ 
      800  & 0.006477 & 0.98  & 0.005938 & 0.92  & 0.002991 & 0.80 \\ 
      1600  & 0.003414 & 0.92  & 0.003215 & 0.89  & 0.001922 & 0.64 \\ 
      3200  & 0.001897 & 0.85  & 0.001814 & 0.83  & 0.001295 & 0.57 \\ 
      \noalign{\smallskip}\hline
  \end{tabular}
\end{table}
\normalsize

The second set of experiments shown in Tables~\ref{tab:32vol_exp6} to
\ref{tab:32vol_exp4} are concerned with the stiff case $\lambda = 25$ while all
other parameters remain unchanged. At first glance we see that the explicit
Euler-Maruyama method performs much worse than the two implicit methods for
$N_h \in \{25, 50, 100, 200\}$ in the deterministic case $\sigma = 0$. 
This even stays true when noise is present. On the other hand, the BDF2
method clearly performs best in the deterministic case although the
experimental order of convergence increases rather slowly to $2$ compared to
the non-stiff case in Table~\ref{tab:32vol_exp3}. Further note that the error
of the BEM method and the BDF2 method agree for $N_h = 25$. This is explained
by the fact that the second initial value of the multi-step method is generated
by the BEM method and, apparently, this is where the error is largest for both
schemes. 

Moreover, we observe that the errors in Table~\ref{tab:32vol_exp5} with $\sigma
= \frac{1}{3}$ are of the same magnitude as those in Table~\ref{tab:32vol_exp6}.
Due to the larger value of $\lambda$ the presence of the noise only
seems to have a visible impact on the error of the BDF2-Maruyama method with
$N_h = 3200$. Hence, the BEM method performs significantly worse than the
BDF2-Maruyama scheme for all larger values of $N_h$.

In contrast to the non-stiff case this behaviour does not change so drastically
when the noise intensity is increased to $\sigma = 1$. In
Table~\ref{tab:32vol_exp4} we still observe a better performance of the
BDF2-Maruyama method, although the estimated errors are seemingly affected by
the presence of a stronger noise.

To sum up, in our numerical experiments the BDF2-Maruyama method  and the two
Euler methods performed equally well if the equation is non-stiff and driven 
by a higher noise intensity. In all other tested scenarios (with stiffness
and/or with small noise intensity) the BDF2-Maruyama method is often superior
to the two Euler methods in terms of the experimental error. Hence,
our observations confirm the results reported earlier in \cite{buckwar2006}.  


\subsection{An example in two dimensions: a toy discretization of an SPDE}
\label{subsec:2D}

Here we consider the two dimensional equation
\begin{align*}
&\diff X_{1}(t)
  +
  \frac12
  \big(
    (1+\lambda)X_1(t)
    +
    (1-\lambda)X_2(t)
  \big)
  \diff t
  =
  \big(
    X_1(t)-X_1(t)^3
  \big)
  \diff t
  +
    \sigma
    X_1(t)^2
  \diff{W_1(t)},\\
&\diff X_2(t)
  +
  \frac12
  \big(
    (1-\lambda)X_1(t)
    +
    (1+\lambda)X_2(t)
  \big)
  \diff t
  =
  \big(
    X_2(t)-X_2(t)^3
  \big)
  \diff t
  +
    \sigma
    X_2(t)^2
  \diff{W_2(t)},
\end{align*}
for $t\in(0,T]$, $X(0)=X_0\in\R^2$ with $\sigma\geq0$ and $\lambda>>0$. We write this equation in the form
\begin{align}\label{eq:toySPDE}
  \diff X(t)
  +AX(t) \diff{t}
  =f(X(t))\diff t
  +
  g(X(t))\diff W(t),
  \quad
  t\in(0,T],
  \quad
  X(0)=X_0,
\end{align}
with $A$ being the positive and symmetric $2\times 2$-matrix
\begin{align*}
  A=
  \frac1{\sqrt{2}}
  \left[
    \begin{array}{cc}
      1 & 1\\
      1 & -1
    \end{array}
  \right]
  \left[
    \begin{array}{cc}
      1 & 0\\
      0 & \lambda
    \end{array}
  \right]
  \frac1{\sqrt{2}}
  \left[
    \begin{array}{cc}
      1 & 1\\
      1 & -1
    \end{array}
  \right]
  =
  \frac12
  \left[
    \begin{array}{ll}
      1+\lambda & 1-\lambda\\
      1-\lambda & 1+\lambda
    \end{array}
  \right],
\end{align*}
and the non-linearities $f\colon \R^2\to \R^2$, $g\colon\R^2\to\R^{2\times 2}$ given by
\begin{align*}
  f(x)=
  \left[
    \begin{array}{c}
      x_1-x_1^3\\
      x_2-x_2^2
    \end{array}
  \right],
  \quad
  g(x)=
  \sigma
  \left[
    \begin{array}{cc}
      x_1^2 & 0\\
      0& x_2^2
    \end{array}
  \right],
  \quad
  X_0=(x_1,x_2)\in\R^2.
\end{align*}

The reason why we are interested in \eqref{eq:toySPDE} is its similarity to the
equation obtained when discretizing a stochastic partial differential equation
with a Galerkin method. The matrix $-A$ is a substitute for a discrete
Laplacian, which is a symmetric and negative definite matrix. With a Galerkin
approximation in $k$ dimensions the eigenvalues
$\lambda_1<\lambda_2,\dots,\lambda_{k-1}<\lambda_k$ satisfy that $\lambda_k>>0$
is very large, causing a stiff system. Our matrix $A$ is chosen to mimic this
stiffness. Moreover, our choice of $f$ is due to its similarity with the
nonlinearity of the Allen-Cahn equation. We take the diffusion coefficient to
be quadratic in order to demonstrate an example with superlinear growth.

Assumption~\ref{as:fg} is valid for all $\lambda\geq 0$ and
$\sigma\in[0,\sqrt{2}/3)$ with $L=1$, $q=3$ and $\eta=\tfrac1{2\sigma^2}$. The
coercivity condition \eqref{eq:bound_fg} is left to the reader to check and it
is in fact it that determines the upper bound for $\sigma$. To verify the
global monotonicity condition \eqref{eq:onesided}, we first notice that for
$x,y\in \R^2$, it holds that
\begin{align*}
&\big(
    f(x)-f(y),x-y
  \big)\\
& \quad
  =
  |x-y|^2
  -(x_1^3-y_1^3)
  (x_1-y_1)
  -(x_2^3-y_2^3)
  (x_2-y_2)\\
& \quad
   =
  |x-y|^2
  -(x_1^2 + x_1 y_1 + y_1^2)
  (x_1-y_1)^2
  -(x_2^2 + x_2 y_2+ y_2^2)
  (x_2-y_2)^2,
\end{align*}
and that
\begin{align*}
  |g(x)-g(y)|_{\HS}^2
&=
  \Tr((g(x)-g(y))^*(g(x)-g(y)))\\
&=
  \sigma^2
  \Tr
  \left(
    \left[
      \begin{array}{cc}
        (x_1^2-y_1^2)^2 & \#\\
        \# & (x_2^2-y_2^2)^2  
      \end{array}
    \right]
  \right)\\
&=
   \sigma^2
   \big(
     (x_1^2-y_1^2)^2 +(x_2^2-y_2^2)^2
   \big)\\
&=
   \sigma^2
   \big(
   (x_1-y_1)^2(x_1+y_1)^2 
   +
   (x_2-y_2)^2(x_2+y_2)^2
   \big)\\
&=
   \sigma^2
   \big(
   (x_1-y_1)^2(x_1^2+2x_1y_1+y_1^2) 
   + 
   (x_2-y_2)^2(x_2^2+2x_2y_2+y_2^2)
   \big).
\end{align*}
Since it also holds that 
$
  -
  (
    A(x-y),x-y
  )
  \leq0
$
we get for all $x,y\in\R^2$ that
\begin{align*}
&\big(
    A(x-y)+f(x)-f(y)
    ,x-y
  \big)
  +
  \eta
  |g(x)-g(y)|_{\HS}^2\\
&\quad
  \leq
  |x-y|^2
  +
  (\eta\sigma^2-1)
  \big(
    (x_1^2+y_1^2)(x_1-y_1)^2
    +
    (x_2^2+y_2^2)(x_2-y_2)^2
  \big)\\
&\qquad
  +
  (2\eta\sigma^2-1)
  \big(
    x_1y_1(x_1-y_1)^2 + x_2 y_2 (x_2-y_2)^2
  \big)
\end{align*}
Under the assumption that $2\eta\sigma^2=1$ we obtain that
\begin{align*}
&\big(
    A(x-y)+f(x)-f(y)
    ,x-y
  \big)
  +
  \eta
  |g(x)-g(y)|_{\HS}^2
  \leq
  |x-y|^2,
  \quad x,y\in\R^2,
\end{align*}
which proves the global monotonicity for $L=1$.

\begin{figure}\label{fig:sol_path}
  \includegraphics[width=\linewidth]{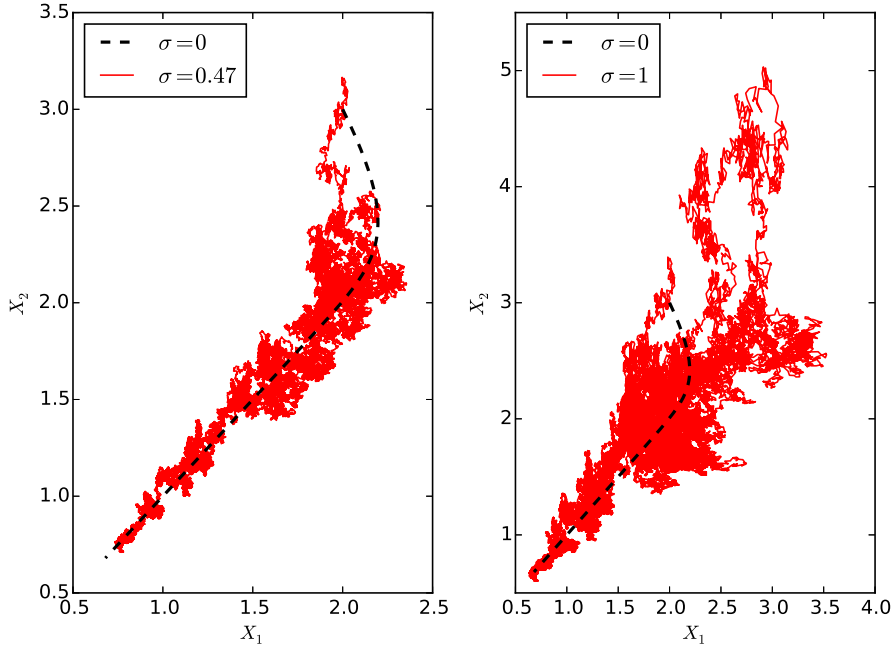}
  \caption{Sample trajectories computed with BDF2 and $N=25\cdot2^{12}$ time
  steps for different noise intensities. The same sample path of the noise is
  used in both plots.}  
\end{figure}

\begin{figure}\label{fig:proj}
  \includegraphics[width=\linewidth]{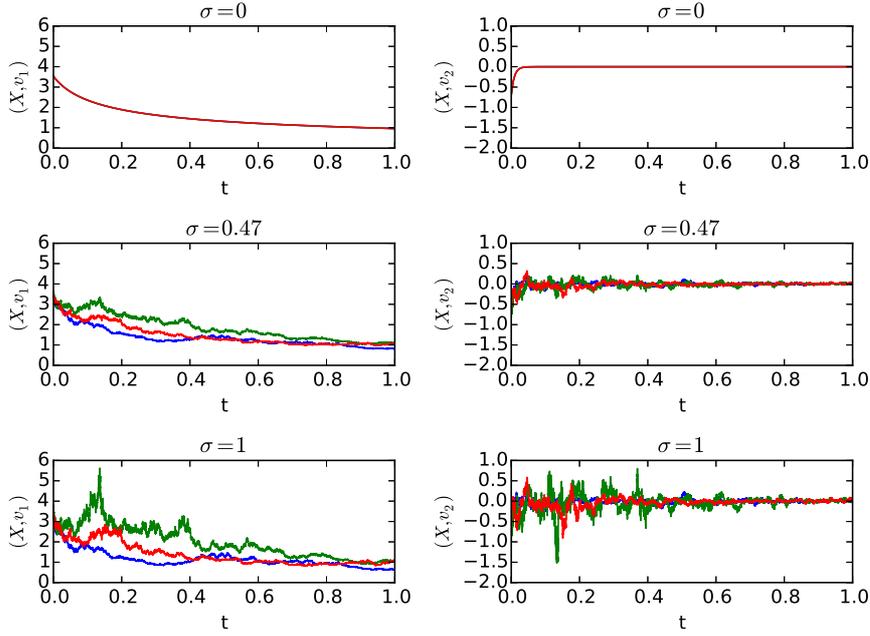}
  \caption{Projections of several sample paths with
  different noise intensity onto the eigenvectors $v_1$
  and $v_2$ of the matrix $A$, respectively.} 
\end{figure}

We use the experimental setup of Subsection~\ref{subsec:3over2}. As the
equation \eqref{eq:quadeq} is not explicitly solvable with $f$ and $g$ from the
present subsection we use $K=5$ Newton iterations in each time step to obtain
an approximate solution. More precisely, these iterations $\tilde
X_h^{j,0},\dots,\tilde X_h^{j,K}=X_h^j$ are given $\tilde X_h^{j,0} =
X_h^{j-1}$ and for $k\in\{1,\dots,K\}$ by 
$\tilde X_h^{j,k}=\tilde X_h^{j,k-1}-(D\Phi_h^j(X_h^{j,k-1}))^{-1}\Phi_h^j(X_h^{j,k-1})$, where
$\Phi_h^j(x)=x-\beta h (f(x)-Ax)-R_h^j$ and $D\Phi_h^j$ is the Jacobian. With $c=\tfrac{1-\lambda}2$
it holds that
\begin{align*}
  (D\Phi_h^j(x))^{-1} \Phi_h^j(x)
&=
  \frac1
  {
    (1-\beta h (c-3x_1^2))
    (1-\beta h (c- 3x_2^2))
    -
    (\beta h c)^2
  }\\
&\quad
  \times
  \left[
    \begin{array}{l}
      (1-\beta h(c-3x_2^2))(x_1-\beta h (c(x_1-x_2)-x_1^3)-R_{h,1}^j)\\
      \quad-\beta h c(x_2-\beta h (c(x_2-x_1)-x_2^3)-R_{h,2}^j)\\
      (1 - \beta h (c-3x_1^2))(x_2-\beta h (c(x_2-x_1)-x_2^3)-R_{h,2}^j)\\
      \quad-\beta h c(x_1-\beta h (c(x_1-x_2)-x_1^3)-R_{h,1}^j)
    \end{array}
  \right].
\end{align*}
This is used for the Newton iterations.

We perform three experiments, all with $\lambda=96$, $T=1$, $X_0=[2,3]^t$, one
without noise, i.e., $\sigma=0$, one with small noise intensity
$\sigma=0.47$, which is just below the threshold $\sqrt{2}/3$ for
$\sigma$, allowed for the theoretical results to be valid, and one with large
noise intensity $\sigma=1$ to see how the methods compare outside the allowed
parameter regime. Comparing Table~\ref{tab:toySPDE1} and \ref{tab:toySPDE2} we
observe that the errors differ very little. This suggests that the noise is
negligible for the small noise case and therefore does not effect the dynamics
much, but this suggestion is false. This is clear from
Figure~\ref{fig:sol_path}, where a typical path is shown and in
Figure~\ref{fig:proj}, where the same solution path, together with two other
solution paths are projected in the directions of the eigenvectors
$v_1=\tfrac1{\sqrt2}[1,1]^t$ and $v_2=\tfrac1{\sqrt2}[1,-1]^t$, corresponding
to the eigenvalues $1$ and $\lambda$, respectively, of the matrix $A$.
Direction $v_2$ is the stiff direction. This expresses itself in
Figure~\ref{fig:proj} by a strong drift towards zero of $(X,v_2)$, while
$(X, v_1)$ is more sensitive to the noise. 

For $N_h=25$ the CFL-condition $|1-\lambda h|<1$ is not satisfied while for
$N_h\geq50$ it is. Table~\ref{tab:toySPDE1} shows, in the case of no noise,
explosion of the Euler-Maruyama method, for the crude refinement levels for
which the CFL condition is not satisfied. The backward Euler-Maruyama and BDF2
methods work for all refinement levels and perform better than the
Euler-Maruyama method, but only the BDF2 method performs significantly better.
For small noise Table~\ref{tab:toySPDE2} shows essentially the same errors as
for those without noise, only with slightly worse performance for the
Euler-Maruyama scheme.  
Taking into account that the computational effort for the BEM
and BDF2-Maruyama schemes are essentially the same our results
show the latter to be superior for this problem. Our results confirm the
conclusion from the previous subsection that the BDF2-Maruyama scheme performs
better for stiff equations with small noise. For $\sigma=1$ the Euler-Maruyama
scheme explodes for all refinement-levels and BDF2 has lost most of its
advantage over BEM. For the crudest step size the error is even higher than for
BEM.

\small
\begin{table}[t]
  \caption{Without noise: $\sigma = 0$.}
  \label{tab:toySPDE1}
  \begin{tabular}{p{1.1cm}p{1.5cm}p{1.2cm}p{1.5cm}p{1.2cm}p{1.5cm}p{0.9cm}}
     & EulM &      & BEM &      & BDF2 &  \\ 
     \noalign{\smallskip}\hline\noalign{\smallskip}
     $N_h$     & error & EOC     & error & EOC     & error & EOC \\ 
     \noalign{\smallskip}\hline\noalign{\smallskip}
      25      & - &      & 0.095559 &      & 0.062955 & \\ 
      50  & 0.512850 & -  & 0.052993 & 0.85  & 0.031538 & 1.00 \\ 
      100  & 0.036046 & 3.83  & 0.028182 & 0.91  & 0.013142 & 1.26 \\ 
      200  & 0.016245 & 1.15  & 0.014630 & 0.95  & 0.005002 & 1.39 \\ 
      400  & 0.007861 & 1.05  & 0.007472 & 0.97  & 0.001784 & 1.49 \\ 
      800  & 0.003875 & 1.02  & 0.003779 & 0.98  & 0.000575 & 1.63 \\ 
      1600  & 0.001925 & 1.01  & 0.001901 & 0.99  & 0.000169 & 1.76 \\ 
      3200  & 0.000959 & 1.00  & 0.000953 & 1.00  & 0.000048 & 1.83 \\ 
  \end{tabular}
\end{table}
\normalsize

\small
\begin{table}[t]
  \caption{Small noise intensity: $\sigma = 0.47$.} 
  \label{tab:toySPDE2}
  \begin{tabular}{p{1.1cm}p{1.5cm}p{1.2cm}p{1.5cm}p{1.2cm}p{1.5cm}p{0.9cm}}
     & EulM &      & BEM &      & BDF2 &  \\ 
     \hline\noalign{\smallskip}
     $N_h$     & error & EOC     & error & EOC     & error & EOC \\ 
     \noalign{\smallskip}\hline\noalign{\smallskip}
      25      & - &      & 0.091651 &      & 0.056638 & \\ 
      50  & - & -  & 0.051324 & 0.84  & 0.029411 & 0.94 \\ 
      100  & 0.044452 & -  & 0.027599 & 0.90  & 0.012246 & 1.26 \\ 
      200  & 0.019432  & 1.19  & 0.014429 & 0.94  & 0.004621 & 1.41 \\ 
      400  & 0.009300 & 1.06  & 0.007403 & 0.96  & 0.001613 & 1.52 \\ 
      800  & 0.004538 & 1.04  & 0.003792 & 0.97  & 0.000523 & 1.62 \\ 
      1600  & 0.002253 & 1.01  & 0.001910 & 0.99  & 0.000187 & 1.48 \\ 
      3200  & 0.001124 & 1.00  & 0.000961 & 0.99  & 0.000095 & 0.97 \\ 
      \noalign{\smallskip}\hline
  \end{tabular}
\end{table}
\normalsize

\small
\begin{table}[t]
  \caption{Large noise intensity: $\sigma = 1$.} 
  \label{tab:toySPDE3}
  \begin{tabular}{p{1.1cm}p{1.5cm}p{1.2cm}p{1.5cm}p{1.2cm}p{1.5cm}p{0.9cm}}
     & EulM &      & BEM &      & BDF2 &  \\ 
     \hline\noalign{\smallskip}
     $N_h$     & error & EOC     & error & EOC     & error & EOC \\ 
     \noalign{\smallskip}\hline\noalign{\smallskip}
      25      & - &      & 0.085359 &      & 0.107572 & \\ 
      50  & - & -  & 0.054110 & 0.66  & 0.054215 & 0.99 \\ 
      100  & - & -  & 0.030864 & 0.81  & 0.027929 & 0.96 \\ 
      200  & - & -  & 0.016582 & 0.89  & 0.014925 & 0.90 \\ 
      400  & - & -  & 0.008928 & 0.89  & 0.007853 & 0.93 \\ 
      800  & - & -  & 0.004635 & 0.95  & 0.004010 & 0.97 \\ 
      1600  & - & -  & 0.002372 & 0.97  & 0.002135 & 0.91 \\ 
      3200  & - & -  & 0.001221 & 0.96  & 0.001127 & 0.92 \\ 
      \noalign{\smallskip}\hline
  \end{tabular}
\end{table}
\normalsize

\begin{acknowledgements}
The authors wish to thank Etienne Emmrich for bringing the
identity \eqref{eq:emmrich2} to our attention and for informing us about its
connection to the BDF2-scheme. Stig Larsson and Chalmers University of
Technology are acknowledged for our use of the computational resource Ozzy.
This research was carried out in the framework of {\sc Matheon} supported by
Einstein Foundation Berlin. 
\end{acknowledgements}



\end{document}